\newtheorem{theorem}{Theorem}[section]
\newtheorem{proposition}[theorem]{Proposition}
\newtheorem{corollary}[theorem]{Corollary}
\newtheorem{lemma}[theorem]{Lemma}
\newtheorem{example}[theorem]{Example}
\theoremstyle{remark}
\newtheorem{remark}[theorem]{{\bf Remark}}
\numberwithin{equation}{section}
\newcommand{\A}{\mathbf A}
\newcommand{\N}{{\mathbb N}}
\newcommand{\R}{{\mathbb R}}
\renewcommand{\P}{{\mathcal P}}
\newcommand{\Ge}{{\mathcal G}}
\newcommand{\eps}{{\varepsilon}}
\newcommand{\vark}{{\varkappa}}
\newcommand{\ol}{\overline}
\renewcommand\a{\alpha}
\renewcommand\b{\beta}
\newcommand\g{\gamma}
\renewcommand\d{\delta}
\renewcommand\l{\lambda}
\newcommand\n{\nabla}
\renewcommand\o{\omega}\renewcommand\O{\Omega}
\def\var{\varphi}
\def\vark{\varkappa}
\newcommand{\esssup}{\mathop{\mathrm{ess\,sup}}}
\renewcommand{\div}{\operatorname{div}}
\newcommand{\indi}{\mbox{\rm 1\hspace{-2pt}\rule[0mm]{0.2mm}{7.9pt}\hspace{ 1pt}}}
\newcommand{\ind}{\hbox{\rm 1\hskip -4.5pt 1}}
\def\part{\partial}
\def\leq {\leqslant}
\def\geq {\geqslant}
\def\pr{^{\prime}}
\newcommand{\tr}{\operatorname{tr}}
\newcommand{\TR}[1]{\tr\{{#1}\}}
\def\lan{\langle}
\def\ran{\rangle}
\newcommand{\<}{\langle}
\renewcommand{\>}{\rangle}
\def\XXint#1#2#3{{\setbox0=\hbox{$#1{#2#3}{\int}$}
\vcenter{\hbox{$#2#3$}}\kern-.5\wd0}}
\def\W{\rlap{$\buildrel \circ \over W$}\phantom{W}}
\gdef\SetFigFont#1#2#3#4#5{%
  \reset@font\fontsize{#1}{#2pt}%
  \fontfamily{#3}\fontseries{#4}\fontshape{#5}%
  \selectfont}%
\begin{document}

\title{
Gradient estimates for degenerate quasi-linear parabolic
equations }
\author{
{\large Vitali Liskevich}\\
\small Department of Mathematics\\
\small University of Swansea\\
\small Swansea SA2 8PP, UK\\
{\tt v.a.liskevich@swansea.ac.uk}\\
\and
{\large Igor I.\,Skrypnik}\\
\small Institute of Applied\\
\small Mathematics and Mechanics\\
\small Donetsk 83114,  Ukraine\\
{\tt iskrypnik@iamm.donbass.com}
\and
{\large Zeev Sobol}\\
\small Department of Mathematics\\
\small University of Swansea\\
\small Swansea SA2 8PP, UK\\
{\tt z.sobol@swansea.ac.uk}
}

\date{}

\maketitle

\begin{abstract}
For a general class of divergence type quasi-linear degenerate
parabolic  equations with differentiable structure and lower order
coefficients form small with respect to the
Laplacian we obtain $L^q$-estimates for the gradients of solutions,
and for the lower order coefficients from  Kato-type classes we show
that the solutions are Lipschitz continuous with respect to the
space variable.
\end{abstract}

\bigskip

\section{Introduction and main results}

In this paper we study regularity of local weak solutions to general divergence type quasi-linear degenerate parabolic  equations with
measurable coefficients and lower order
terms. This class of equations has numerous applications and has been attracting attention for several decades (see, e.g.  the monographs
\cite{DiB, LaSU, WUZ}, survey \cite{DiUV} and references therein).

Let $\O$ be a domain in $\R^N$, $T>0$. Set $\O_T=\O\times (0,T)$. We study
solutions to
the
equation
\begin{equation}
\label{e0} u_t-\div \A (x,t,u, \n u)=b(x,t,u,\n u) ,\quad (x,t)\in
\O_T.
\end{equation}

Throughout the paper we suppose that 
the function $(\A,b):\O_T\times\R\times \R^N\to \R^N\times \R$
satisfy the Carath\'{e}odory condition, that is $(\A,b)(\cdot,u,z)$
is Lebesgue measurable for all $u\in \R,z\in\R^N$,
and $(\A,b)(x,t,\cdot,\cdot)$ is continuous for almost all $(x,t)\in\O_T$.

We also assume that the following structure conditions are satisfied:
\begin{eqnarray}
\nonumber
\A (x,t,u, z)z &\ge& c_0 |z|^p,\quad z\in\R^n,\\
|\A (x,t,u, z)|&\le& c_1(|z|^{p-1}+1),\label{e1.2b}\\
\nonumber |b(x,t,u,z)|&\le& g(x)|z|^{p-1}+f(x)(|u|^{p-1}+1),
\end{eqnarray}
where $p  \ge 2$,
$c_1$ and $c_2$ are positive constants and $f$ and $g$ are
nonnegative functions.

Let us remind the reader of the notion of a weak solution to
equation \eqref{e0}. We say that $u$ is a weak solution to
\eqref{e0} if $u\in V(\O_T):=L^p_{loc}\big((0,T);
W^{1,p}_{loc}(\O)\big)\cap C\big((0,T); L^2_{loc}(\O)\big)$ and
for any interval $[t_1,t_2]\subset (0,T)$ the integral identity
\begin{equation}
\label{e1.12b}
\int_{\O}u\psi dx\Big|_{t_1}^{t_2}+\int_{t_1}^{t_2}
\int_\O \left\{-u\partial_\tau\psi+\A(x,t,u,\n u) \n\psi
\right\}dx\,d\tau=\int_{t_1}^{t_2}
\int_\O b(x,t,u,\n u)\psi dx\,d\tau
\end{equation}
for any $\psi \in W^{1,p}_c(\O_T)$.

In \cite{LS2} local boundedness of weak solutions to \eqref{e0}
was obtained under optimal conditions on $f$ and $g$ in terms
of membership to the nonlinear Kato classes, which are defined
below. The main thrust of the result in \cite{LS2}  is the
presence of singular lower order coefficients in the structure conditions with optimal assumptions while
not assuming anything in addition on the diffusion part.

In what follows we use the notion of the Wolff potential of a function $f$ (cf.~\cite{AH}), which is defined
by
\[
W_{\b,p}^f(x,R):=\int_0^R \frac{dr}{r}\left(\frac1{r^{N-\b p}}\int_{B_r(x)}f(y)dy\right)^\frac1{p-1},
\]
where here and below $B_r(x)=\{z\in \O\,:\,|z-x|<r\}$, \ $p>1$ and $\b>0$. For the case $\b=1$ it is customary to drop the first index and write $W_{p}^f(x,R)$.
The corresponding non-linear (local) Kato-type  classes $K_{\b,p}$ are defined by
\begin{equation}
\label{e1.9}
K_{\b,p}:=\left\{f\in L_{loc}^1(\O)\,:\,
\lim\limits_{R\to 0} \sup\limits_{x\in\O'}W_{\b,p}^f(x,R)=0 \mbox{ for all }\O'\Subset\O
\right\}.
\end{equation}
In case $\b=1$ we simply write $K_p=K_{1,p}$.
The nonlinear Kato class $K_p$ was introduced in \cite{biroli0}.
As one can easily see,
for $p=2$, the class $K_p$ reduces to the standard definition of the Kato  class with respect to the Laplacian \cite{Bsimon}, which is extensively used
in the qualitative linear theory of elliptic and parabolic second order PDEs. The class $K_p$ turns out to be almost optimal condition on the lower order
coefficients also in case of nonlinear $p$-Laplacian type elliptic and parabolic PDEs for a number of qualitative properties to hold (see \cite{LS1,LS2} and the references therein).
A typical example of a singular function in $K_p$ is
$\displaystyle
\frac{\ind_{B_{1/2}(0)}}{|x|^p\left(\log\frac1{|x|}\right)^\a}$ with
$\a>p-1$, where
here and further on $\ind_{S}$ stands for the characteristic function of the set $S$.
It was proved in \cite{LS2} that the condition $f,\,g^p\in K_p $ implies that $u\in L^\infty_{loc}$. In fact, an inspection of the proof there
shows that the conditions of membership of the structure coefficients to the corresponding
Kato class can be weakened to the requirement that $\sup\limits_{x\in {\O'}}W_p^{g^p+f}(x,2R)$ is
sufficiently small for any subdomain $\O'\Subset \O$. More precisely, (cf.~\cite{LS2},~\cite{LSS})
\begin{quote}
  there exists $\nu>0$ such that, if for every subdomain $\O'\Subset \O$
\begin{eqnarray}
 \label{e1.7b}
 \lim_{R\to 0}\sup\limits_{x\in {\O'}}W_p^{g^p+f}(x,R) <\nu,
\end{eqnarray}
then $u\in L^\infty_{loc}(\O_T)$.
\end{quote}
Throughout the paper we assume that $f$ and $g$ satisfy a condition
guaranteeing that  $u\in L^\infty_{loc}(\O_T)$.

We would like to remark that the condition of smallness of $\sup\limits_{x\in {\O'}}W_p^{f}(x,2R)$ cannot be distinguished
from the Kato type condition $\lim\limits_{R\to 0}\sup\limits_{x\in {\O'}}W_p^{f}(x,R) =0$ if $f$ has only isolated singularities.
For $p=2$ this was already noticed in \cite{AS}, where the corresponding example was constructed.
We give an extension of this example for the general $p\in [2,N)$ in the Appendix.

In this paper
we are interested in the estimates of the gradients of
solutions to \eqref{e0} with differentiable structure in the
diffusion part. The problem of higher regularity of solutions
of quasi-linear equations (and systems) has a long history,
which started from $C_{loc}^{1,\a}$ results for homogeneous
elliptic equations (we refer the reader to the well known
monographs \cite{giusti,La,LaSU,MZ} for the basic results,
historical surveys and references). For a general structure
divergence type quasi-linear elliptic equations, the H\"older
continuity of the gradients of solutions were obtained by
DiBenedetto~\cite{DiB83} and Tolksdorf~\cite{tolksdorf}. For
the case of quasi-linear parabolic equations gradient estimates
under different conditions were studied in
\cite{DiF85,Lieb,Lieb1}, see also monographs \cite{DiB,WUZ} for
basic results and some historic comments. Very recently several
interesting results on estimates of the gradients of solutions
to quasi-linear elliptic and parabolic equations via nonlinear
potentials were obtained in \cite{DM2,DM3}. Most of the
results in \cite{DM2,DM3} concern the elliptic equations of
$p$-Laplacian type $-\div \A (x, \n u)=\nu$ with a measure in
the right hand side. The authors give pointwise estimates of
the gradients of solutions via a nonlinear Wolff potential of
the measure $\nu$, and as a consequence obtain a sufficient
condition for the boundedness of the gradient. In \cite{DM2}
also parabolic equations were studied, and pointwise estimates
of solutions and gradients were obtained, but only for the case
$p=2$. While the results in \cite{DM2,DM3} nicely cover the
case of general measures on the right hand side, the situation
becomes different when the measure $\nu$ is absolutely
continuous with respect to the Lebesgue measure with locally
square integrable density, i.e. $\nu=fdx$ with $f\in
L^2_{loc}$, and the condition on $f$ in \cite{DM2} turns out to
be not optimal, which can be seen on explicit examples. We
remark that while this paper was already in preparation, the
authors were informed about the new preprint \cite{DMl}, where
this situation was studied for the elliptic equations and
systems, and with the vector field $\A$ depending on $\n u$
only. The estimates obtained there are expressed in terms of a
new potential which is in fact $W^{f^2}_{\frac23,3}(x,R)$ and which will
appear in our main results as well.
Below we make a further
comparison of our results with \cite{DMl}.
%
%
%
%
%
%
%
%

We study a general situation for equation \eqref{e0}, that is we allow for the vector field
$\A$ in the diffusion part as well as for the right hand side $b$ to depend on all the arguments. To study higher differentiability
it is standard to assume
%
that
$\A$ is differentiable in $x,u$ and $z$ and that the following ellipticity and growth conditions hold:
\begin{eqnarray}
\label{e1}
&&\lan (\part_z\A) \mu,\mu\ran \ge c_0 |z|^{p-2}|\mu|^2, \quad \forall \mu,\,z\in\R^N,\\
\label{e2}
&& |\part_z\A|\le c_1 (|z|^{p-2}+1),\\
\label{e3}
&&|\part_u\A| \le g_1(x)|z|^{p-2}+f_1(x),\\
\label{e3a}
&&|\part_x\A|\le g_2(x)|z|^{p-1}+ f_2(x),
\end{eqnarray}
where $f,f_1,f_2,g,g_1,g_2$ are nonnegative functions.
Without loss of generality, we do not assume dependence of $u$
in the right hand side of \eqref{e1} - \eqref{e3a} since $u$ is
locally bounded due to \eqref{e1.7b}. In the sequel we refer to
$f,f_1,f_2,g,g_1,g_2$ as to the structure coefficients (cf., e.g.
\cite[Chap.\,~VIII]{DiB}, see also Remark~\ref{rem-dz} below).

Our aim here
is to reveal most general conditions on the structure coefficients guaranteeing
higher integrability and boundedness of the gradients of solutions.
To formulate our results,
we need to introduce 
some additional classes playing special roles in the results.

The class $K_{\frac23,3}$ defined in \eqref{e1.9} with
$\b=\frac23$ and $p=3$, with a typical example of a singular
function in $K_{\frac23,3}$ as $\displaystyle
\frac{\ind_{B_{1/2}(0)}}{|x|^2\left(\log\frac1{|x|}\right)^\a}$
\ with $\a>2$, already appeared in structure conditions in
\cite{LS1} as $\widetilde K_2$.

%

We also need to introduce a class of form bounded function with respect to the Laplacian with form bound $\b>0$, which we further denote by $PK_\b$.

We say that $F$ is form bounded with respect to the Laplacian
with form bound $\b>0$ and write $F\in PK_\b$ if $F\in
L^1_{loc}(\O)$ and there exists $C\ge 0$ such that
for all $\theta\in C_0^\infty(\O)$
\[
\left|\int_\O F \theta^2dx  \right|\le \b \int_{\O}|\n \theta|^2dx +C\int_\O \theta^2 dx.
\]
We will also need the class of infinitesimally form bounded
function with respect to the Laplacian, which we further denote
by $PK_0$, and the class of form bounded function with respect
to the Laplacian, which is denoted by $PK$. These classes are
defined by $PK_0=\bigcap\limits_{\b>0}PK_\b$ and
$PK=\bigcup\limits_{\b>0}PK_\b$. All the three classes became
indispensable in many problems in PDE theory. Their complete
characterization can be found in \cite{MaVe},~\cite{MaVe1}. For comparison
with the Kato type classes, an example of a singular function
in $PK$ is $\displaystyle\frac{\ind_{B_{1/2}(0)}}{|x|^2}$,
while and example of a member of $PK_0$ is $\displaystyle
\frac{\ind_{B_{1/2}(0)}}{|x|^2\left(\log\frac1{|x|}\right)^\a}$
\ with $\a>0$. We also need local versions of the above
classes. Namely, we say that $F\in PK_\b^{loc}$ (respectively,
$PK_0^{loc}$, $PK^{loc}$) if $F\ind_{\O'}\in PK_\b$
(respectively, $PK_0$, $PK$) for any
$\O'\Subset\O$.




While our main object in this paper is the general equation, it seems worth
giving an example of a simpler equation which would illustrate the results below, and which seems to be of independent interest.
Let us consider the nonhomogeneous evolution $p$-Laplace equation $u_t-\Delta_p u =f$. It follows from our results below that
if $f^2\in PK_0$ then the gradient of any weak solution $u$ is in $L^q_{loc}$ for any $q<\infty$, while if $f^2\in K_{\frac23,3}$ then $\n u\in L^\infty_{loc}$.
So, for $f(x)=\frac1{|x|\left(\log\frac1{|x|}\right)^\a}\indi_{B_{1/2}(0)}$ with $\a>0$ we have that $\n u\in L^q_{loc}$ for any $q<\infty$, and with $\a>1$,
the conclusion is that $\n u \in L^\infty_{loc}$, and hence every solution is locally Lipschitz continuous with respect to the spatial variables.

\bigskip
Our strategy is the following. We first show that under some
general assumptions on the structure coefficients there exists
a local weak solution to \eqref{e0} whose space Hessian exists almost everywhere and the
space gradient is
in $L^q_{loc}$ for an arbitrary large $q$, in a cylinder $Q=B_R\times (t_1,t_2) \Subset \O_T$ provided
the Wolff potentials $\sup_{x\in B_R}W^f_p(x,2R)$ and $\sup_{x\in B_R}W^{g^p}_p(x,2R)$ are sufficiently small. This constitutes an
existence result. The required a priori estimates are obtained
by a finite number of iterations of Moser type. The main
assumption here is that all squares of structure coefficients
are infinitesimally form bounded with respect to the Laplacian.
Next, under some mild additional assumption on $f_1$ and
$g_1$, for instance, $f_1,g_1^p\in K_p$, we prove that every weak solution to \eqref{e0} in $\O_T$ has the
same smoothness. In the proof of this result we follow the idea
of Tolksdorf \cite{tolksdorf}, comparing the solution $u$ to
\eqref{e0} on a small cylinder, with a smooth solution to an
auxiliary initial boundary value problem in $Q$ with $u$ as
initial boundary value data and the equation satisfying the
same structure condition as \eqref{e0}. A significant
difference between our situation and that in \cite{tolksdorf}
is that we do not rely on a priori H\"older continuity (or even
continuity) of the weak solution to \eqref{e0} but rather on
the property of smallness of the Wolff potentials $\sup_{x\in B_R}W^{g^p}_p(x,2R)$ and $\sup_{x\in B_R}W^{g_1^p}_p(x,2R)$
(see Lemma~\ref{lem-Dir}). The next
step is to obtain the supremum estimates of the gradient. This
requires stronger assumptions on the structure coefficients.
The technique we use to achieve the result is a parabolic
version of the Kilpel\"ainen--Mal\'y technique \cite{KiMa},
\cite{MZ} (see~\cite{LS2,Skr1}).

Our first result concerns the existence of weak solutions to
\eqref{e0} with integrable powers of the gradient. Further on
we distinguish between the gradient $\nabla\xi$ of a scalar
function $\xi$ and the spatial derivative $D\zeta$ of a vector
valued function $\zeta$. We set
$[D\zeta]_{kl}=\partial_{x^l}\zeta_k$. The space $\R^{N\times
N}$ of matrices is equipped with the Hilbert-Schmidt norm: for
$M=\{m_{kl}\}\in\R^{N\times N}$, we set
$|M|^2\equiv|M|^2_{HS}=\sum\limits_{kl} m_{kl}^2.$

\begin{theorem}
\label{PK-existence} Let $Q$ denote the cylinder $B_R\times
(t_1,t_2)$ such that $Q\Subset \O_T$. Let $v\in V(\O_T)\cap
L^{p'}_{loc}\big((0,T);\;W^{-1,p'}_{loc}(\O)\big)$, and let
$\A$ and $b$ satisfy the structure conditions \eqref{e1.2b} and
\eqref{e1}--\eqref{e3a} with
$(f^2+f_1^2+g_1^2+f_2^2+g_2^2)\ind_{B_R}\in PK$.
Assume that $\sup\limits_{x\in {B_R}}W_p^{g^p+f}(x,2R)$ is
sufficiently small. Then there exists a solution $u$ to
\eqref{e0} in $Q$ satisfying $u=v$ on the parabolic boundary
$\P Q$ of $Q$, such that, for every $l>0$ and $q \ge p$ and
every cylinder $Q'=B'\times (t_1',t_2')\Subset Q$ there exist
constants $\beta,\g$ such that
\begin{equation}
\label{main-est}
\esssup_{t\in(t'_1,t'_2)}\int_{B'} |\n u|^{q-p+2}dx
+\iint_{Q'} \Big|D\big(\n u(|\n u|-l)_+^{\frac q2 -1}\big)\Big|^2)dxd\tau\le \g,
\end{equation}
provided $(g_1^2+g_2^2)\ind_{B_R}\in PK_\beta$. In particular,
if $(g_1^2+g_2^2)\ind_{B_R}\in PK_0$ then $\n u\in
L^q_{loc}(Q)$ for every  $q<\infty$.

Moreover, there exist sequences of Carath\'{e}odory functions
$(\A_n, b_n):Q\times\R\times\R^N\to\R^N\times\R$ and $u_n\in
L^2_{loc}\big((t_1,t_2);\;W^{2,2}_{loc}(B_R)\big)\cap
C\big((t_1,t_2);\;W^{1,2}_{loc}(B_R)\big)\big)$ satisfying
$\partial_tu_n - \div A_n(u_n,\n u_n)=b_n(u_n,\n u_n)$, $u_n=v$
on $\P Q$, such that $(\A_n, b_n)(x,t,s,z)\to (\A, b)(x,t,s,z)$
as $n\to\infty$ for a.a. $(x,t)\in Q$ and all
$(s,z)\in\R\times\R^N$, that $u_n\to u$ and $\n u_n\to \n u$ as
$n\to\infty$ pointwise a.e. on $Q$, and that $(\A_n, b_n)$
satisfies the structure conditions \eqref{e1.2b} and
\eqref{e1}--\eqref{e3a} with the same constants $c_0$ and $c_1$
and functions $f$ and $g$, and smooth functions $f_{1,n}$,
$g_{1,n}$, $f_{2,n}$, $g_{2,n}$, replacing $f_{1}$, $g_{1}$,
$f_{2}$, $g_{2}$, respectively, satisfying the $PK$ conditions
with the same constants, and $f_{i,n}\to f_i$ and $g_{i,n}\to
g_i$, $i=1,2$ pointwise a.e.
\end{theorem}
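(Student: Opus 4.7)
\medskip

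\textbf{Proof proposal.} The plan is a three-step strategy: first, regularise the problem to a non-degenerate one with smooth data for which classical theory yields a strong solution; second, derive a priori estimates of the form \eqref{main-est} uniformly in the regularisation parameter, using an iteration of Moser type combined with the form-boundedness assumptions; finally, pass to the limit using monotonicity arguments to recover a weak solution of \eqref{e0} with the required integrability of the gradient.

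For the approximation, I would set $\calA_n(x,t,u,z):=\calA(x,t,u,z)+\frac{1}{n}(|z|^2+\frac{1}{n})^{(p-2)/2}z$ composed with a standard mollification in $(x,t,u,z)$ on a smaller cylinder, and replace $f_i,g_i$ by smooth truncated mollifications $f_{i,n},g_{i,n}$ while keeping $f$ and $g$ fixed (since the smallness of the Wolff potential is the only thing that matters for them). The boundary-initial datum $v$ is also mollified. The regularised vector field is uniformly elliptic with smooth coefficients and the lower order term $b_n$ still satisfies the Carath\'eodory structure with the same $f,g$. Classical results (e.g. Ladyzhenskaya--Solonnikov--Uraltseva type theory, cf.~\cite[Ch.~V]{LaSU}) then provide $u_n\in L^2_{loc}\big((t_1,t_2);W^{2,2}_{loc}(B_R)\big)\cap C\big((t_1,t_2);W^{1,2}_{loc}(B_R)\big)$ solving the regularised equation with $u_n=v_n$ on $\P Q$. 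The local $L^\infty$ bound $\|u_n\|_\infty\le M$ on $Q'$, uniform in $n$, follows from the smallness of $W_p^{g^p+f}(\cdot,2R)$ together with the result recalled in \eqref{e1.7b}.

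The crucial step is the uniform gradient estimate. Since $u_n$ has an $L^2_{loc}$ Hessian, one may differentiate the equation with respect to $x^k$, multiply by $\part_{x^k}u_n\,(|\n u_n|-l)_+^{q-p}\eta^2$ for a cut-off $\eta\in C_0^\infty(Q)$, sum in $k$ and integrate by parts. The quadratic form $\lan\part_z\calA_n\mu,\mu\ran\ge c_0|\n u_n|^{p-2}|\mu|^2$ produces the main term controlling $\iint|D(\n u_n(|\n u_n|-l)_+^{q/2-1})|^2\eta^2\,dxd\tau$; the storage term yields the supremum estimate in $t$. On the right-hand side one meets contributions from $\part_x\calA_n, \part_u\calA_n$ and from $b_n$, whose worst terms are of the form
\[
\iint (g_1^2+g_2^2)|\n u_n|^p(|\n u_n|-l)_+^{q-p}\eta^2\,dxd\tau,\qquad \iint (f^2+f_1^2+f_2^2)\,(1+|\n u_n|^{q-p+2})\eta^2\,dxd\tau.
\]
For the first I apply the form-bound $(g_1^2+g_2^2)\ind_{B_R}\in PK_\beta$ to the test function $\theta:=|\n u_n|^{p/2}(|\n u_n|-l)_+^{(q-p)/2}\eta$, which produces a factor $\beta$ in front of $\iint|\n\theta|^2\,dxd\tau$; for $\beta$ small relative to $c_0$ this term is absorbed. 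For the lower order contributions involving $(f^2+f_1^2+f_2^2)\ind_{B_R}\in PK$, the form-bound is applied likewise and the excess $C\iint\theta^2$ is reabsorbed into a Gronwall scheme in $t$. A finite Moser iteration over exponents $q_k$ then yields \eqref{main-est} for every $q\ge p$; in the $PK_0$ regime $\beta$ can be taken arbitrarily small so the iteration runs for all $q<\infty$.

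Finally I would pass to the limit $n\to\infty$. The uniform estimates give $u_n\to u$ in $C([t_1,t_2];L^2(B'))$ and weakly in the relevant Sobolev spaces, with $\n u_n$ uniformly bounded in $L^q_{loc}$. To identify $\calA(\cdot,u,\n u)$ in the limit of $\calA_n(\cdot,u_n,\n u_n)$ one exploits the monotonicity of $z\mapsto\calA(x,t,u,z)$ built into \eqref{e1}: the standard Minty trick, combined with testing the equation against $u_n-u$ on a suitable sub-cylinder and using the form-bounds to control the lower order terms, gives $\n u_n\to\n u$ a.e. The lower semicontinuity of the norms under weak convergence then transfers \eqref{main-est} from $u_n$ to $u$. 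I expect the main obstacle to be the Moser iteration itself, specifically the verification that the absorption constants produced by the $PK_\beta$-inequality deteriorate in a controlled way as $q$ grows, and the careful handling of the nonlinear weight $(|\n u|-l)_+^{q-p}$ so that the chain rule for Sobolev functions can be invoked at the level of the approximating strong solutions before passing to the limit.
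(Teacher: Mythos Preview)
Your overall three-step architecture (regularise, uniform Moser-type gradient estimates via the $PK$ form bounds, pass to the limit) matches the paper's, and the a~priori estimate step is essentially the same. Two points of divergence are worth noting. First, the approximation: the paper mollifies $\A$ in $x$ only and adds the linear term $\eps z$, and truncates $b$; your proposal to mollify in all variables $(x,t,u,z)$ risks destroying the pointwise-in-$z$ structure conditions \eqref{e1}--\eqref{e3a} (convolution in $z$ does not preserve the lower bound $c_0|z|^{p-2}$), so that part would need to be adjusted. Second, and more substantively, the passage to the limit is genuinely different. You invoke the Minty--Browder monotonicity device: once $u_n\to u$ strongly in $L^p_{loc}$ (this needs a bound on $\partial_t u_n$ in $L^{p'}(W^{-1,p'})$, which you do not state but which follows directly from the equation), testing against $(u_n-u)\eta$ and exploiting the strict monotonicity of $z\mapsto\A(x,t,u,z)$ gives $\nabla u_n\to\nabla u$ in $L^p_{loc}$, hence a.e. The paper instead avoids Minty entirely: it proves a separate time-regularity estimate for the nonlinear quantities $(|\nabla u_\eps|-l)_+^\alpha\nabla u_\eps$ in $L^1\big((t_1',t_2');W^{-1,\sigma'}\big)$ (a fairly involved calculation based on the differentiated equation), then applies Simon's Aubin--Lions-type compactness to obtain $L^1_{loc}$-compactness of these objects for each $l=1/m$, and finally uses a tailor-made pointwise lemma (if $\xi_n(|\xi_n|-\tfrac1m)_+^\alpha$ converges a.e.\ for every $m$, then $\xi_n$ does) to recover a.e.\ convergence of $\nabla u_n$. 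Your route is shorter and more classical; the paper's route is heavier but delivers compactness of exactly the truncated gradient powers that appear in \eqref{main-est}, and does not rely on monotonicity of $\A$ in $z$ beyond what is already used in the energy estimates.
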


The next theorem establishes the same smoothness as above, for
all solutions to \eqref{e0}.

\begin{theorem}
\label{PK-aposteriori} Let $\A$ and $b$ satisfy the structure
conditions \eqref{e1.2b} and \eqref{e1}--\eqref{e3a} with
$f^2,\,f_1^2,\,f_2^2,\,g_2^2\in PK^{loc}$. Let $u$ be a weak
solution to \eqref{e0} in $\O_T$. There exists $\nu>0$ such
that, if for all $\O'\Subset\O$,
\[\lim\limits_{R\to0}
\sup\limits_{x\in\O'}\Big[W^f_p(x,R) +
W^{f_1^\frac{p}{p-1}}_p(x,R)+W^{g^p}_p(x,R)+W^{g_1^{p}}_p(x,R)\Big]<\nu,\]
then, for every $q\ge p$ and $l>0$, there exists $\beta>0$ such
that
\[
\n u\in L^\infty_{loc}\Big((0,T); L^{q-p+2}_{loc}(\O)\Big) \mbox{ and }
\n u\big(|\n u|-l\big)_+^{\frac q2-1}\in L^2_{loc}\Big((0,T); W^{1,2}_{loc}(\O)\Big)
\]
provided $g_2^2\in PK_\beta^{loc}$.  In particular, if
$g_2^2\in PK_0^{loc}$ then $\n u\in L^q_{loc}(Q)$ for every
$q<\infty$.
\end{theorem}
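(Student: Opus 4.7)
The strategy is the Tolksdorf-style comparison alluded to in the introduction. Given an arbitrary weak solution $u$ to \eqref{e0} in $\O_T$, on any sufficiently small cylinder I would construct, via Theorem~\ref{PK-existence}, an auxiliary solution $\tilde u$ with the same parabolic boundary values as $u$ and with the claimed gradient integrability, and then show that any weak solution coinciding with $u$ on $\P Q$ must coincide with $\tilde u$, transferring the regularity back to $u$.

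Fix $Q'=B'\times(t_1',t_2')\Subset\O_T$ and pick $Q=B_R\times(t_1,t_2)$ with $Q'\Subset Q\Subset\O_T$ and $R$ so small that the $\limsup$ hypothesis places the relevant Wolff potentials below $\nu$ on $B_R$. The condition $(f^2+f_1^2+g_1^2+f_2^2+g_2^2)\ind_{B_R}\in PK$ required by Theorem~\ref{PK-existence} holds after shrinking $R$: $f^2,f_1^2,f_2^2,g_2^2\in PK^{loc}$ by assumption, while $g_1^2\in PK^{loc}$ is deduced from the smallness of $W_p^{g_1^p}$ by H\"older's inequality combined with Hardy-Sobolev. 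Moreover $u\in L^{p'}_{loc}((0,T);W^{-1,p'}_{loc}(\O))$ follows from \eqref{e0}, \eqref{e1.2b} and the $L^\infty_{loc}$-bound on $u$. Setting $v:=u$ in Theorem~\ref{PK-existence} yields a weak solution $\tilde u$ of \eqref{e0} on $Q$ with $\tilde u=u$ on $\P Q$, obtained as the pointwise a.e.\ limit of smooth approximations $u_n$ solving $\partial_t u_n-\div\A_n(u_n,\n u_n)=b_n(u_n,\n u_n)$ with $u_n=u$ on $\P Q$, and satisfying \eqref{main-est} on $Q'$.

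The heart of the argument is the identification $u\equiv\tilde u$ on $Q$. I would work at the approximate level: subtract the integral identity \eqref{e1.12b} for $u$ from that for $u_n$, test with a Steklov-averaged version of $(u-u_n)\chi(t)$ for a time cut-off $\chi$, and exploit the ellipticity \eqref{e1} to produce the coercive term
\[
\iint_Q \bigl(|\n u|^2+|\n u_n|^2\bigr)^{\frac{p-2}{2}}|\n(u-u_n)|^2\,dx\,d\tau.
\]
The discrepancy of $\A$ evaluated at $u$ versus $u_n$ and the difference $b(x,t,u,\n u)-b_n(x,t,u_n,\n u_n)$ generate remainders bounded, via \eqref{e3}, \eqref{e1.2b} and the growth of $b$, by the sum $\iint_Q\{g_1|\n u_n|^{p-1}+f_1\}|u-u_n|\,dx\,d\tau+\iint_Q\{g|\n u|^{p-1}+f(|u|^{p-1}+1)\}|u-u_n|\,dx\,d\tau$. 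The small-Wolff-potential hypothesis on $g^p,g_1^p,f,f_1^{p/(p-1)}$, together with the Wolff-potential control of the nonlinear capacitary integrals exploited in Lemma~\ref{lem-Dir}, absorbs these remainders into the coercive term; the smallness parameter $\nu$ is consumed here. Passing $n\to\infty$ using the pointwise convergence $u_n\to\tilde u$, $\n u_n\to\n\tilde u$ and the uniform estimates of Theorem~\ref{PK-existence} forces $u=\tilde u$ on $Q$, and \eqref{main-est} then furnishes the claimed membership of $\n u$ in $L^\infty_{loc}((0,T);L^{q-p+2}_{loc}(\O))$ and of $\n u(|\n u|-l)_+^{q/2-1}$ in $L^2_{loc}((0,T);W^{1,2}_{loc}(\O))$, and hence $\n u\in L^q_{loc}$ for every $q<\infty$ under $g_2^2\in PK_0^{loc}$.

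The main obstacle will be this comparison/uniqueness step: since $u$ is only known to lie in $V(\O_T)$ with no a priori time regularity, the Steklov averaging and the handling of the parabolic boundary trace on $\P Q$ of a general weak solution require care, as does keeping the lower-order contributions uniform along the approximating sequence $(\A_n,b_n)$. The essential new ingredient compared to the a priori setting of Theorem~\ref{PK-existence} is that the smallness of the Wolff potentials in the hypothesis is exactly what converts the singular lower-order coefficients $g,g_1,f,f_1$ into an absorbable perturbation in the comparison.
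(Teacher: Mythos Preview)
Your overall architecture is right---construct a regular solution $\tilde u$ on a small cylinder with $\tilde u=u$ on $\P Q$, then identify $u$ with $\tilde u$---but the identification step as you have sketched it cannot close. The structure hypotheses give only a growth bound $|b(x,t,s,z)|\le g|z|^{p-1}+f(|s|^{p-1}+1)$; there is no Lipschitz or monotonicity assumption on $b$ in $(s,z)$. Hence when you test the difference of the two equations by $u-u_n$ (or, after passing to the limit, $u-\tilde u$), the term $\big(b(u,\n u)-b(\tilde u,\n\tilde u)\big)(u-\tilde u)$ can only be controlled by $g\big(|\n u|^{p-1}+|\n\tilde u|^{p-1}\big)|u-\tilde u|$. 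After H\"older and Corollary~\ref{lem-hardy-Kp} this produces a contribution of order $\|\n\tilde u\|_p^{p-1}\,\nu^{(p-1)/p}\,\|\n(u-\tilde u)\|_p$, which is \emph{linear} in $\|\n(u-\tilde u)\|_p$ and leaves a nonvanishing constant after Young's inequality; it cannot be absorbed into the coercive term $c_p\iint(|\n u|+|\n\tilde u|)^{p-2}|\n(u-\tilde u)|^2$. The same obstruction arises from the $\partial_u\A$ cross term: Lemma~\ref{DA} leaves a remainder $g_1^2|\n\tilde u|^{p-2}|u-\tilde u|^2$ which again carries a factor $|\n\tilde u|^{p-2}$ rather than $|\n(u-\tilde u)|^{p-2}$ and is not absorbable.

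The paper circumvents this not by proving uniqueness for \eqref{e0} but by replacing the right-hand side: one designs an auxiliary equation $\partial_t\tilde u-\div\A(\tilde u,\n\tilde u)=\widetilde b(\tilde u,\n\tilde u)$ with $\widetilde b=\widehat b+\ol b$, where $\widehat b(x,t,\tilde\mu,\tilde\eta)=\Gamma_p\big(f_1^{p'}|u-\tilde\mu|^{(2-p)/(p-1)}+g_1^2|\tilde\eta|^{p-2}\big)(u-\tilde\mu)$ is a monotone correction that exactly cancels the bad $\partial_u\A$ remainders in Lemma~\ref{DA}, and $\ol b$ is the truncation of the \emph{frozen} value $b(x,t,u,\n u)$ at level $f(1+|u|^{p-1})+g|2\tilde\eta|^{p-1}$, so that $b(u,\n u)-\ol b(\tilde u,\n\tilde u)$ is nonzero only where $|\n u|\ge 2|\n\tilde u|$ and hence is bounded by $2^{p-1}g|\n u-\n\tilde u|^{p-1}$. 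Both modifications vanish at $(\tilde\mu,\tilde\eta)=(u,\n u)$, so $u$ itself solves the auxiliary equation; the auxiliary equation satisfies the structure conditions with $f$ replaced by $\sim f+f_1^{p'}+g_1^p$ and $g$ by $\sim g+g_1$ (this is where the smallness of $W_p^{f_1^{p'}}$ and $W_p^{g_1^p}$ enters), so Theorem~\ref{PK-existence} produces a regular $\tilde u$; and now the comparison yields only terms of the form $\|\n(u-\tilde u)\|_p^{p-1}\|g(u-\tilde u)\|_p$ and $\|\n(u-\tilde u)\|_p^{p-2}\|g_1(u-\tilde u)\|_p^2$, both absorbable for $\nu$ small. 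Your proposal is missing this auxiliary-equation device (Proposition~\ref{coincide} in the paper), which is the essential idea borrowed from Tolksdorf.
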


Finally, we give sufficient conditions for the boundedness of the gradient of solutions.

\begin{theorem}
\label{main} Let $\A$ and $b$ satisfy structure conditions
\eqref{e1.2b} and \eqref{e1}--\eqref{e3a}. Let $u$ be a weak
solution to \eqref{e0}. There exists $\nu>0$ such that, if for
all $\O'\Subset\O$,
\[\lim\limits_{R\to0}
\sup\limits_{x\in\O'}\Big[W^{g^p}_p(x,R)+W^{g_1^{p}}_p(x,R)\Big]<\nu,\]
and
\[
\lim\limits_{R\to0}
\sup\limits_{x\in\O'}\Big[
W_{\frac23,3}^{f^2}(x,R) + W_{\frac23,3}^{g^2}(x,R) + W_{\frac23,3}^{f_1^2}(x,R) +
W_{\frac23,3}^{g_1^2}(x,R) + W_{\frac23,3}^{f_2^2}(x,R) + W_{\frac23,3}^{g_2^2}(x,R)
\Big]<\nu,
\]
that
is, for any $\O'\Subset\O$,
\begin{equation}
\label{main-cond}
\lim_{R\to 0} \sup_{x\in \O'} \int_0^R \frac{dr}r\left(\frac1{r^{N-2}}\int_{B_r(x)}\left(f(y)^2+g(y)^2+f_1(y)^2
+g_1(y)^2+f_2(y)^2+g_2(y)^2\right)dy\right)^\frac12 <\nu,
\end{equation}
then
\[
\n u\in L^\infty_{loc}(\O_T),
\]
i.e. all solutions to \eqref{e0} are Lipschitz continuous with respect to the spatial variables.

In particular, if $g^p+g_1^p\in K_p$ and $f^2+g^2+f_1^2+g_1^2+f_2^2+g_2^2\in K_{\frac23,3}$, then every weak solution to~\eqref{e0} is locally Lipschitz.
\end{theorem}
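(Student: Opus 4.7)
My plan is to establish a uniform $L^\infty_{loc}$ bound on the gradients of the smooth approximants $u_n$ constructed in Theorem~\ref{PK-existence}, and then pass to the limit using the pointwise a.e.\ convergence $\n u_n\to\n u$. The Kato-type smallness \eqref{main-cond} easily implies the $PK_0^{loc}$ membership of the squares of all the structure coefficients, so the hypotheses of Theorems~\ref{PK-existence} and~\ref{PK-aposteriori} are in force and the uniform $L^q_{loc}$ estimates \eqref{main-est} are available for every $q<\infty$; moreover, since $u_n\in L^2_{loc}((t_1,t_2);W^{2,2}_{loc}(B_R))$, all formal differentiations of the equation carried out below are justified.

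The first step is to derive a differential inequality for $w_n:=|\n u_n|^2$. Differentiating the $n$-th equation in the spatial variables, testing against $\n u_n$ and summing, one arrives, after standard manipulations using \eqref{e1}--\eqref{e3a}, at a linear parabolic inequality of the form
\[
\tfrac12\,\part_t w_n - \div\big((\part_z\A_n)\cdot\n w_n\big) + c_0\,|\n u_n|^{p-2}\,|D^2 u_n|^2 \le H_n,
\]
where $H_n$ is controlled by products of the structure coefficients $f,g,f_1,g_1,f_2,g_2$ with appropriate powers of $|\n u_n|$ and $|D^2 u_n|$, the latter being absorbed into the coercive term on the left via Cauchy's inequality. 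The leading operator is uniformly elliptic with weight $|\n u_n|^{p-2}$, and hence scales like the Newtonian Laplacian on the set where $|\n u_n|$ is bounded away from zero.

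The core of the argument is a parabolic version of the Kilpel\"ainen--Mal\'y iteration, as in \cite{LS2,Skr1}. On nested parabolic cylinders $Q_j\Subset Q_0\Subset\O_T$ and a geometric sequence of truncation levels $k_j\nearrow \esssup w_n$, I would test the previous inequality against $(w_n-k_j)_+\zeta^{2\lambda}$ for suitable cut-offs $\zeta$, and combine the resulting Caccioppoli-type energy estimate with Sobolev embedding to produce a recursion in $j$. The key point is that the averages controlling the right-hand side at scale $r$ take the form $r^{-(N-2)}\int_{B_r}(\mbox{coeff})^2\,dy$ (Newtonian scaling, quadratic in the structure coefficients), whose square root summed against $\tfrac{dr}{r}$ is exactly the potential $W_{\frac23,3}^{(\cdot)^2}(x,R)$ from \eqref{main-cond}. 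Choosing $\nu$ small enough allows the recursion to close and yields a bound
\[
\esssup_{Q'} |\n u_n|^2 \le \g\Big(1+\iint_{Q_0}|\n u_n|^{q}\,dx\,dt\Big),
\]
with $\g$ independent of $n$; combined with the uniform $L^q_{loc}$ estimate \eqref{main-est}, this produces a uniform $L^\infty_{loc}$ bound on $\n u_n$, and the conclusion follows from the pointwise a.e.\ convergence $\n u_n\to\n u$ and lower semicontinuity.

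The main obstacle is executing the iteration in the presence of the degenerate weight $|\n u_n|^{p-2}$, which collapses on the set $\{\n u_n=0\}$. This is circumvented by running the iteration at the level of the truncation $(|\n u_n|-l)_+$ for a fixed $l>0$ (the very object already appearing in \eqref{main-est}), so that the weight is bounded below by $l^{p-2}$ on the region where the iteration is active, while $\{|\n u_n|<l\}$ contributes only the trivial bound $l$. A second subtlety is the bookkeeping needed to match the quadratic Newton-scale averages of the squared structure coefficients produced by the Caccioppoli--Sobolev step with the Wolff potential of index $(\frac23,3)$; this is essentially an exponent count but is what dictates the appearance of precisely this potential in place of the $W_p$ potentials appearing in the hypotheses of Theorem~\ref{PK-aposteriori}.
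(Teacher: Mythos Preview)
Your strategy---differentiate the equation, run a parabolic Kilpel\"ainen--Mal\'y iteration on $w=|\n u|^2$, handle the degeneracy by working on $\{|\n u|>l\}$, and read off the Wolff potential $W_{\frac23,3}^{F^2}$ from the Newton-scale averages $r^{2-N}\int_{B_r}F^2$---is exactly the paper's approach (Section~4), and the paper likewise justifies the second-order differentiations via the approximating sequence of Theorem~\ref{PK-existence}.

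One genuine caution: what you actually describe (``geometric sequence of truncation levels $k_j$'' and testing against $(w_n-k_j)_+\zeta^{2\l}$) is a plain De~Giorgi iteration, and under a Kato-type hypothesis such an iteration does \emph{not} close; it would force $F^2\in L^q$ for some $q>N/2$, strictly stronger than \eqref{main-cond}. The KM scheme is different in two essential respects. First, the test function is not the bare truncation but carries the KM weight: the paper tests with $\n u\,\sigma\var(\sigma)\xi^q$, where $\sigma=\big(\tfrac{w-l_j}{\d_j}\big)_+$ and $\var(\sigma)=\int_0^\sigma(1+s)^{-1-\l}ds$, and then uses a Hardy inequality against the solution of $-\Delta h=F^2$ (Lemma~\ref{lem5}) to absorb the dangerous term. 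Second, the levels are chosen \emph{adaptively}, not geometrically: one sets $l_{j+1}=l_j+\d_j$ with $\d_j$ determined by the condition that a certain averaged functional $A_j(l_{j+1})$ equal a fixed small constant $\varkappa$. It is precisely this adaptive choice that yields the recursion $\d_j\le c\,\d_{j-1}+\g\, l_j F_j$ with $F_j=\big(r_j^{2-N}\int_{B_j}F^2\big)^{1/2}$; summing in $j$ gives $l_\infty\le\g\d_0$ once $\sum_j F_j$---the discretized $W_{\frac23,3}^{F^2}$ potential---is small. Your outline has the right architecture and the right endpoint, but the mechanism you wrote down would not deliver it.
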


Due to the scaling properties of equation~\eqref{e0} one can eliminate the smallness conditions on the coefficients $f,f_1$ and $f_2$.
The next statement though a simple consequence of the preceding theorem,  gives a generalization of the above result both in the sense of
the structure condition on the right hand side $b$ and on the conditions on the structure coefficients $f,f_1$ and $f_2$.

\begin{corollary}
\label{draft-cor}
Let $\A$ satisfy structure conditions \eqref{e1.2b} and $b$ satisfy the structure condition
\[
|b(x,t,u,z)|\le g(x)|z|^{p-1}+h(x)|u|^{p-1} + f(x).
\]
Let $u$ be a weak
solution to \eqref{e0}.
Assume that for every $\O'\Subset \O$,
\[
\sup_{x\in \O'}\int_0^R \frac{dr}{r} \left(\frac1{r^{N-2}}\int_{B_r(x)}[f(y)^2+f_1(y)^2+f_2(y)^2]dy\right)^\frac12 <\infty.
\]
Then there exists $\nu>0$ such that if for every $\O'\Subset \O$,
\[\lim\limits_{R\to0}
\sup\limits_{x\in\O'}\Big[W^{g^p}_p(x,R)+W^{g_1^{p}}_p(x,R)\Big]<\nu,\]
and
\[
\lim\limits_{R\to0}
\sup\limits_{x\in\O'}\Big[W^{h^2}_{\frac23,3}(x,R)+
 W_{\frac23,3}^{g^2}(x,R) +
W_{\frac23,3}^{g_1^2}(x,R) +  W_{\frac23,3}^{g_2^2}(x,R)
\Big]<\nu,
\]
then
$u$ is Lipschitz.
\end{corollary}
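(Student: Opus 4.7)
My plan is to derive the corollary from Theorem~\ref{main} by means of a scaling argument. For a parameter $\lambda>0$ to be chosen large, I would introduce the rescaled function $v(x,t):=\lambda^{-1}u(x,\lambda^{2-p}t)$. A direct computation shows that $v$ satisfies $v_t-\div\tilde\A(x,t,v,\n v)=\tilde b(x,t,v,\n v)$, where
\[
\tilde\A(x,t,v,z):=\lambda^{1-p}\A(x,\lambda^{2-p}t,\lambda v,\lambda z),\qquad \tilde b(x,t,v,z):=\lambda^{1-p}b(x,\lambda^{2-p}t,\lambda v,\lambda z).
\]
Checking conditions \eqref{e1.2b}, \eqref{e1}--\eqref{e3a} for $(\tilde\A,\tilde b)$ term by term, I would verify that the constants $c_0,c_1$ and the functions $g,g_1,g_2,h$ are preserved by the scaling, while the remaining structure coefficients transform as
\[
\tilde f=\lambda^{1-p}f,\qquad \tilde f_1=\lambda^{2-p}f_1,\qquad \tilde f_2=\lambda^{1-p}f_2.
\]

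Next I would examine how the relevant Wolff potentials transform under this scaling. Homogeneity of the integrals gives $W^{\tilde f^2}_{\frac23,3}=\lambda^{1-p}W^{f^2}_{\frac23,3}$ and $W^{\tilde f}_p=\lambda^{-1}W^f_p$, with analogous identities for $f_1$ and $f_2$. Since $p\ge 2$, the finiteness hypothesis on $\sup_{x\in\O'}W^{f^2+f_1^2+f_2^2}_{\frac23,3}(x,R)$ lets me choose $\lambda$ large enough to render all three rescaled potentials arbitrarily small, uniformly in $x\in\O'$. The potentials of $g^p,g_1^p,h^2,g^2,g_1^2,g_2^2$ are invariant under the scaling and already small by hypothesis. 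Exploiting the standing local boundedness $|u|\le M$ of the paper, I would use $|v|\le M/\lambda$ to absorb the lower-order term $h|v|^{p-1}$ in $\tilde b$: setting $f':=\lambda^{1-p}(hM^{p-1}+f)$, one obtains $|\tilde b|\le g|z|^{p-1}+f'(|v|^{p-1}+1)$, matching the form \eqref{e1.2b} required by Theorem~\ref{main}. Then $W^{(f')^2}_{\frac23,3}\le C\lambda^{1-p}\bigl(M^{p-1}W^{h^2}_{\frac23,3}+W^{f^2}_{\frac23,3}\bigr)$ is small by the assumed smallness of $W^{h^2}_{\frac23,3}$, the finiteness of $W^{f^2}_{\frac23,3}$, and large $\lambda$.

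Once $\lambda$ has been fixed large enough for all smallness hypotheses of Theorem~\ref{main} to hold for the rescaled equation, I would apply that theorem to conclude $\n v\in L^\infty_{loc}$. Reversing the scaling via $\n u(x,s)=\lambda\,\n v(x,\lambda^{p-2}s)$ yields $\n u\in L^\infty_{loc}(\O_T)$, establishing local Lipschitz regularity of $u$ in the spatial variables. The most delicate ingredient in this program is the treatment of the coefficient $\tilde f_1$: for $p>2$ the factor $\lambda^{2-p}\to 0$ provides the needed smallness automatically, but for the critical case $p=2$ the scaling does not shrink $f_1$, and I would have to appeal to the finiteness hypothesis on $W^{f_1^2}_{\frac23,3}$ in combination with the scaling-invariant smallness of $W^{g_1^2}_{\frac23,3}$, essentially revisiting the role of $f_1$ in the proof of Theorem~\ref{main}.
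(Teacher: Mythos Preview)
Your approach is essentially the same scaling argument the paper uses: set $v(x,\tau)=\lambda^{-1}u(x,\lambda^{2-p}\tau)$, verify that $(\tilde\A,\tilde b)$ satisfies the structure conditions with $g,g_1,g_2,h$ unchanged and $f,f_1,f_2$ multiplied by negative powers of $\lambda$, then choose $\lambda$ large to force the smallness needed for Theorem~\ref{main}. The only minor difference is your treatment of the $h|v|^{p-1}$ term: the paper simply keeps it in the form $h|v|^{p-1}+\lambda^{1-p}f$, which already matches \eqref{e1.2b} with new coefficient $\tilde f=h+\lambda^{1-p}f$, and then invokes the assumed smallness of $W^{h^2}_{\frac23,3}$ directly---your detour through local boundedness of $u$ is unnecessary but harmless. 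Your caveat about $p=2$ is well observed; the paper's proof likewise relies on $\lambda^{2-p}\to0$ and does not treat the borderline case separately.
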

\begin{proof}
Let $\l>1$ to be chosen later.
Let
\[
\tau=\l^{p-2}, \quad v(x,\tau)=\l^{-1}u(x,t).
\]
Then $v$ satisfies the equation
\[
v_\tau - {\rm div}\,\widetilde\A(x,\tau,v,\n v)=\widetilde b,
\]
where $\big(\widetilde\A,\widetilde b\big)
(x,\tau,v,z)=\l^{1-p}\big(\A,b\big)(x,\l^{2-p}\tau,\l v , \l
z)$. 
For $\widetilde b$:
\[
|\widetilde b(x,\tau,v, \n v)|=\l^{1-p}|b(x,t,u,\n u)|\le g|\n v|^{p-1}+h|v|^{p-1}+\l^{1-p}f.
\]
Analogously one can verify the structure conditions on $\widetilde\A$:
\begin{eqnarray}
\label{ee1}
&&\lan (\part_z\widetilde\A) \mu,\mu\ran \ge c_0 |z|^{p-2}|\mu|^2, \quad \forall \mu,\,z\in\R^N,\\
\label{ee2}
&& |\part_z\widetilde\A|\le c_1 (|z|^{p-2}+\l^{2-p}),\\
\label{ee3}
&&|\part_u\widetilde\A| \le g_1(x)|z|^{p-2}+\l^{2-p}f_1(x),\\
\label{ee3a}
&&|\part_x\widetilde\A|\le g_2(x)|z|^{p-1}+ \l^{1-p}f_2(x),
\end{eqnarray}
So fix $\l$ large enough so that $\widetilde f=\l^{1-p}f$, $\widetilde f_1=\l^{2-p}f_1$, $\widetilde f_2=\l^{1-p}f_2$
satisfy the condition of Theorem~\ref{main} and the assertion follows.
\end{proof}

\begin{remark}
To compare Theorem~\ref{main} and Corollary~\ref{draft-cor}
with main results in \cite{DMl}, in which {\it elliptic} equations and systems are studied,
we first note that
the results in
\cite{DMl} concern the special case of the vector field $\A$ depends on $\n u$ only, that is when $f_1=g_1=f_2=g_2=0$ in structure conditions \eqref{e3},~\eqref{e3a}.
Theorem~1.4 in \cite{DMl} states only the existence of a locally  Lipschitz solution to the equation $-\div \A (\n u)=b(x,u,\n u)$ subject to the Dirichlet boundary
condition with boundary data from $W^{1,p}(\O)$ under the assumption of smallness of the Wolff type potential $W_{\frac23,3}$ of $f^2+g^2$.
%
%
The assertion that all the
solutions are Lipschitz is proved in Theorem~1.1 in \cite{DMl}, where  only  the particular case
$b(x,u,\n u)=f(x)$ is considered.
So both results follow from Corollary~\ref{draft-cor} as special cases.
\end{remark}

\begin{remark} As a consequence of Theorem~\ref{main} one can give sufficient conditions of the local boundedness of the gradient
of solutions to \eqref{e0} in terms of the structure
coefficients belonging to the Lorentz spaces. This is based on
the easily verifiable fact that $f\in L^{N,1}\Rightarrow f^2
\in K_{\frac23,3}$. We do not dwell upon this further, and
refer the reader to \cite{DMl} for an extensive discussion of
this point.
\end{remark}

\begin{remark}
\label{rem-dz}
In all the above results structure condition \eqref{e2} can be replaced  by a more general one $|\part_z\A|\le c_1 |z|^{p-2}+h_1(x)$ with the requirement
$h^2\in PK_0$ for Theorems~\ref{PK-existence},~\ref{PK-aposteriori}, and $h_1^2\in K_{\frac23,3}$ for Theorem~\ref{main} (compare this with $({\cal S}_3)$ in
\cite[Chap.\,VIII]{DiB}). We did not elaborate this further.
\end{remark}

\subsection{Auxiliary facts}

The following lemma provides an inequality of Hardy-type which is useful in the sequel.

\begin{lemma}
\label{lem-hardy}
Let $h\in W^{1,p}_{loc}(\O), h>0$. Suppose that $\Delta_p h\in
L^1_{loc}(\O)$ and $-\Delta_p h>0$. Then for any $\theta\in
\W^{1,p}(\O)$
\begin{equation}\label{Hardy}
    \int_{\O}\frac{(-\Delta_p h)}{h^{p-1}}|\theta|^p dx\le
\int_{\O} |\n \theta |^p dx.
\end{equation}

If in addition $h\in L^\infty(\O)$ then
\begin{equation}
\label{hardy} \int_{\O}{(-\Delta_p h)}|\theta|^p dx\le
\|h\|_\infty^{p-1}\int_{\O} |\n
\theta |^p dx.
\end{equation}
\end{lemma}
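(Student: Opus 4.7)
The approach is the classical regularized-test-function argument. The plan is to insert $\varphi_\eps := |\theta|^p/(h+\eps)^{p-1}$ into the weak formulation of $-\Delta_p h\in L^1_{loc}(\O)$, use Young's inequality to cancel the ``dangerous'' interior term, and then pass to the limit $\eps\downarrow 0$ by monotone convergence.

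First I would reduce by density to the case $\theta\in C_c^\infty(\O)$, so that for fixed $\eps>0$ the function $\varphi_\eps$ is nonnegative, bounded by $\eps^{1-p}\|\theta\|_\infty^p$, compactly supported, and in $W^{1,p}(\O)$ with
\[
\n\varphi_\eps \;=\; \frac{p\,|\theta|^{p-2}\theta\,\n\theta}{(h+\eps)^{p-1}} \;-\; \frac{(p-1)\,|\theta|^p\,\n h}{(h+\eps)^p}.
\]
Testing the equation for $h$ against $\varphi_\eps$ yields
\[
\int_\O(-\Delta_p h)\,\varphi_\eps\,dx \;=\; p\!\int_\O\frac{|\theta|^{p-2}\theta\,|\n h|^{p-2}\n h\cdot\n\theta}{(h+\eps)^{p-1}}\,dx \;-\; (p-1)\!\int_\O\frac{|\theta|^p|\n h|^p}{(h+\eps)^p}\,dx.
\]

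Next, I would apply Young's inequality with exponents $p/(p-1)$ and $p$ to the first integrand, giving
\[
p\,\frac{|\theta|^{p-1}|\n h|^{p-1}}{(h+\eps)^{p-1}}\cdot|\n\theta|\;\le\;(p-1)\,\frac{|\theta|^p|\n h|^p}{(h+\eps)^p}\;+\;|\n\theta|^p,
\]
so the $(p-1)$-terms exactly cancel and we obtain
\[
\int_\O\frac{(-\Delta_p h)\,|\theta|^p}{(h+\eps)^{p-1}}\,dx \;\le\; \int_\O|\n\theta|^p\,dx.
\]
Since $-\Delta_p h\ge 0$ by hypothesis and $(h+\eps)^{1-p}\uparrow h^{1-p}$ as $\eps\downarrow 0$, the monotone convergence theorem delivers \eqref{Hardy}. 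Inequality \eqref{hardy} is then immediate from \eqref{Hardy} upon bounding $h^{p-1}\le\|h\|_\infty^{p-1}$ in the denominator.

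The main obstacle I anticipate is purely technical: justifying that $\varphi_\eps$ is an admissible test function against the distribution $-\Delta_p h$. This is handled by the $L^1_{loc}$ hypothesis together with the fact that $\varphi_\eps$ is bounded with compact support and lies in $W^{1,p}(\O)$, via a standard mollification of the equation satisfied by $h$. The role of the regularization $\eps>0$ is precisely to avoid the possible zeros of $h$ before the limit; the assumption $-\Delta_p h\ge 0$ is what makes the monotone convergence step work without any uniform integrability hypothesis.
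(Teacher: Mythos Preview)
The proposal is correct and follows essentially the same route as the paper: both insert the regularized test function $\varphi_\eps=|\theta|^p/(h+\eps)^{p-1}$ into the weak formulation, apply Young's inequality $pa^{p-1}b-(p-1)a^p\le b^p$ to cancel the interior term, and let $\eps\downarrow 0$; your version is slightly more explicit about the density reduction and the monotone convergence step, but the argument is the same.
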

\proof First, by the Young inequality note that $pa^{p-1}b -
(p-1)a^p\le b^p$ for any $a,\,b>0$ and $p>1$. Let $\eps>0$ and
$0<\theta \in C_c^\infty(\O)$. Then it follows that
\[
 \n\left(\frac{\theta^p}{(h+\eps)^{p-1}}\right)|\n h|^{p-2}\n h
 \le p\frac{\theta^{p-1}|\n h|^{p-1}}{(h+\eps)^{p-1}}|\n \theta|-(p-1)\frac{\theta^p|\n h|^p}{(h+\eps)^p}
 \le |\n\theta|^p.
\]
Integrating the above and letting $\eps\to0$, we obtain
\eqref{Hardy} for $\theta\in C_0^\infty(\O)$. The general case
follows by approximation. In case $h\in L^\infty(\Omega)$, it
follows from \eqref{Hardy} that
\[
\int_{\O}{(-\Delta_p h)}|\theta|^p dx\le
\int_{\O}\frac{\|h\|_\infty^{p-1}}{h^{p-1}}(-\Delta_p h)|\theta|^p dx\le
\|h\|_\infty^{p-1}\int_{\O} |\n
\theta |^p dx.
\]
Hence \eqref{hardy} follows. \qed

\begin{lemma}
\label{lem-Dir} Let $f\ge 0, \, f\in L^1_{loc}$ and $u$ be the
weak solution to
\begin{equation}
\label{Dir}
-\Delta_p u=f \quad\text{in}\ B_{R}, \quad u|_{\partial B_{R}}=0.
\end{equation}
Then there exists $c>0$ such that
\[
\sup_{B_R}u(x)\le c\sup_{B_{R}}W^f_p(x,2R).
\]
\end{lemma}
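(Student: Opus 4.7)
The plan is to reduce the estimate to the classical pointwise Wolff potential bound of Kilpel\"ainen--Mal\'y for Dirichlet solutions of the $p$-Poisson equation. First, since $f\ge 0$ and $u=0$ on $\partial B_R$, the weak comparison principle for the $p$-Laplacian gives $u\ge 0$ in $B_R$, so the supremum is attained by $u$ itself and it suffices to bound $u(x_0)$ from above for an arbitrary $x_0\in B_R$.

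The main step is the pointwise estimate
\[
u(x_0)\le c\,W_p^f(x_0,2R),
\]
with a constant depending only on $N$ and $p$. To obtain it I would fix $x_0\in B_R$, extend $f$ by zero outside $B_R$, and let $W$ denote the non-negative weak solution of $-\Delta_p W=f\,\ind_{B_R}$ in the enlarged ball $B_{3R}(x_0)$ with $W=0$ on $\partial B_{3R}(x_0)$. Inside $B_R$ the two problems $-\Delta_pu = f = -\Delta_pW$ coincide and $u=0\le W$ on $\partial B_R$, so weak comparison gives $u\le W$ in $B_R$. The Kilpel\"ainen--Mal\'y estimate applied to the non-negative $p$-superharmonic function $W$ in $B_{3R}(x_0)$ (cf.~\cite{KiMa} or \cite[Ch.\,4]{MZ}) reads $W(y)\le c\,\inf_{B_r(y)}W+c\,W_p^{f\,\ind_{B_R}}(y,2r)$ for concentric balls in $B_{3R}(x_0)$; iterating this local bound down to the boundary $\partial B_{3R}(x_0)$, where $W=0$, collapses the solid-mean $\inf$-term and yields $W(x_0)\le c\,W_p^{f\,\ind_{B_R}}(x_0,6R)$. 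Since $f\,\ind_{B_R}$ is supported in $B_R$, a direct comparison of Wolff integrals gives $W_p^{f\,\ind_{B_R}}(x_0,6R)\le c\,W_p^f(x_0,2R)$, and taking the supremum in $x_0$ delivers the lemma.

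The principal obstacle is the pointwise Wolff potential bound on $W$ together with the iteration to the boundary that removes the solid-mean term. This is a substantial piece of non-linear potential theory---combining local comparison with $p$-harmonic functions, De Giorgi-type solid-mean estimates and boundary continuity at the smooth sphere $\partial B_{3R}(x_0)$---and is fully documented in \cite{KiMa,MZ}. I would therefore invoke it as a black-box result rather than reprove it here.
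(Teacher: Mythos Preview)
Your comparison strategy is a reasonable alternative, but the step you yourself flag as the principal obstacle is a genuine gap as written. Chaining the local Kilpel\"ainen--Mal\'y bound $W(y)\le c\,\inf_{B_r(y)}W+c\,W_p^{f\ind_{B_R}}(y,2r)$ along points $y_0=x_0,\,y_1,\,y_2,\dots$ approaching $\partial B_{3R}(x_0)$ yields
\[
W(x_0)\le c^{k}W(y_k)+\sum_{j<k}c^{j+1}W_p^{f\ind_{B_R}}(y_j,2r_j),
\]
with the structural constant $c$ compounding at every step. The Wolff-potential sum is indeed harmless (choosing the chain to move away from $B_R$, only the first two or three terms are nonzero since $\mathrm{supp}(f\ind_{B_R})$ stays a fixed distance from $\partial B_{3R}(x_0)$), but sending $k\to\infty$ requires $c^kW(y_k)\to0$. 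Boundary regularity at the smooth sphere gives only $W(y_k)\lesssim 2^{-\beta k}$ for some $\beta=\beta(N,p)>0$, and there is no reason that $c\,2^{-\beta}<1$. Thus the $\inf$-term does not simply ``collapse''; \cite{KiMa} and \cite{MZ} contain the local estimate and the Wiener-criterion machinery, but not the global Dirichlet bound $W(x_0)\le c\,W_p^{f\ind_{B_R}}(x_0,6R)$ as a ready-made statement.

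The paper's proof avoids both the auxiliary $W$ and any iteration. It applies the Kilpel\"ainen--Mal\'y estimate directly to $u$ in the $L^p$-average form
\[
u(x_0)\le c\Big(R^{-N}\!\int_{B_R(x_0)\cap B_R}u^p\Big)^{1/p}+c\,W_p^f(x_0,2R),
\]
and then eliminates the average by a short self-improving argument: testing \eqref{Dir} with $u$ gives the energy bound $\int_{B_R}|\nabla u|^p\le(\sup_{B_R}u)\int_{B_R}f$, Poincar\'e converts this to an estimate on $\int u^p$, and Young's inequality turns the result into $\tfrac12\sup_{B_R}u+c\,W_p^f(x_0,2R)$. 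Taking the supremum over $x_0$ and absorbing $\tfrac12\sup u$ finishes the proof in a few lines. If you want to salvage the comparison route, a cleaner fix (when $p<N$) is to compare with the $p$-superharmonic potential of $f\ind_{B_R}$ on all of $\R^N$, for which the two-sided Wolff bound holds globally and the tail $\int_{2R}^\infty$ is controlled by a single term of $W_p^f(x_0,2R)$; but the paper's direct argument is more economical and covers all $p\ge2$.
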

\begin{proof}
Testing \eqref{Dir} by $u$ we obtain
\begin{equation}
\label{dual} \int_{B_{R}}|\n u|^p dy \le
\sup_{B_R}u(x) \int_{B_R} f(y)dy.
\end{equation}
By \cite[Theorem~4.8]{KiMa} (see also \cite[Theorem~2.125]{MZ}), for $x_0\in B_R$,
\begin{equation}
\label{KM}
u(x_0)\le c \left(\frac1{R^N}\int_{B_R(x_0)\cap B_R}u(y)^pdy\right)^\frac1p +c W^f_p(x_0, 2R).
\end{equation}

Using the Poincar\'{e} inequality, \eqref{dual}, the Young inequality and the definition of the Wolff potential we have
\begin{eqnarray}
\nonumber
&&\left(\frac1{R^N}\int_{B_{R}}u(y)^pdy\right)^\frac1p \le c\left(\frac1{R^{N-p}}\int_{B_{R}}|\n u(y)|^pdy\right)^\frac1p \\
\nonumber
&&\le c\left(\sup_{B_{R}}u(x)\right)^\frac1p \left(\frac1{R^{N-p}}\int_{B_{R}} f(y)dy\right)^\frac1p\le
\frac12 \sup_{B_{R}}u(x)+
c \left(\frac1{R^{N-p}}\int_{B_{R}(x_0)} f(y)dy\right)^\frac1{p-1}\\
\label{Wo1}
&&\le \frac12 \sup_{B_{R}}u(x)+cW^f_p(x_0,2R), \quad x_0\in B_R.
\end{eqnarray}
Combining \eqref{KM} and \eqref{Wo1} and taking supremum over $B_R$ we prove the assertion.
%
%
%
\end{proof}

As a consequence of Lemma~\ref{lem-hardy} and Lemma~\ref{lem-Dir} we obtain

\begin{corollary}
\label{lem-hardy-Kp} Let $\theta\in W^{1,p}_0(B_{R})$, $0\le
f\in L^1_{loc}$. Then there exists $\g>0$ such that
\begin{equation}
\label{hardy-Kp}
\int_{B_R}f\,|\theta|^p d x \le   \g \sup\limits_{B_{2R}}W^f_p(x,2R)^{p-1}\int_{B_R} |\n \theta |^p dx.
\end{equation}
\end{corollary}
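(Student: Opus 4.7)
The plan is to obtain the estimate by combining the two preceding lemmas, using as an auxiliary tool the solution of the $p$-Laplace Dirichlet problem with right-hand side $f$ on $B_R$.

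First I would introduce the auxiliary function $h$ defined as the weak solution of
\[
-\Delta_p h=f \quad\text{in } B_R, \qquad h\big|_{\partial B_R}=0.
\]
Without loss of generality the right hand side of \eqref{hardy-Kp} may be assumed finite, hence $f\in L^1(B_R)$ and, by the maximum principle, $h\ge 0$ (and $h>0$ on the set where $f$ is not identically zero). Applying Lemma~\ref{lem-Dir} to $h$ gives
\[
\|h\|_{L^\infty(B_R)}\le c\sup_{x\in B_R}W^f_p(x,2R)\le c\sup_{x\in B_{2R}}W^f_p(x,2R),
\]
so in particular $h\in L^\infty(B_R)$.

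Next I would invoke the $L^\infty$-version \eqref{hardy} of Lemma~\ref{lem-hardy} (extending $\theta\in W^{1,p}_0(B_R)$ by zero to all of $\R^N$, which is legitimate since $h$ vanishes on $\partial B_R$). Since $-\Delta_p h = f$ and $h\ge 0$, this yields
\[
\int_{B_R}f\,|\theta|^p\,dx=\int_{B_R}(-\Delta_p h)\,|\theta|^p\,dx\le \|h\|_{L^\infty(B_R)}^{p-1}\int_{B_R}|\n\theta|^p\,dx.
\]
Substituting the bound on $\|h\|_\infty$ from Lemma~\ref{lem-Dir} gives \eqref{hardy-Kp} with $\g=c^{p-1}$.

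The only delicate point, and the one I would handle with a short approximation argument, is the hypothesis of Lemma~\ref{lem-hardy} that $h>0$ and $-\Delta_p h>0$ in the strict sense. This is easily arranged by replacing $f$ by $f+\eps$ with $\eps>0$ and letting $\eps\downarrow 0$: the corresponding solutions $h_\eps$ are strictly positive with $-\Delta_p h_\eps=f+\eps>0$, the inequality \eqref{hardy} applies, and on the left hand side we pass to the limit by monotone convergence, while on the right hand side we use $\|h_\eps\|_\infty\le c\sup_{B_{2R}}W^{f+\eps}_p(\cdot,2R)$ together with the monotone behaviour of the Wolff potential as $\eps\downarrow 0$. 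Apart from this minor technical issue, the argument is essentially a one-line composition of the two lemmas.
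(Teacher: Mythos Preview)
Your proposal is correct and follows exactly the route the paper intends: the corollary is stated as an immediate consequence of Lemma~\ref{lem-hardy} and Lemma~\ref{lem-Dir}, and your argument---solving $-\Delta_p h=f$ in $B_R$, bounding $\|h\|_\infty$ via Lemma~\ref{lem-Dir}, and then applying \eqref{hardy}---is precisely that combination. The $\eps$-approximation you add to ensure strict positivity is a reasonable way to handle the technical hypothesis of Lemma~\ref{lem-hardy}.
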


\begin{corollary}
\label{lem-PK-Kp} Let $\theta\in W^{1,p}(B_{R})$, $0\le f\in
L^1_{loc}$. Then there exists $\g>0$ such that, for every
$\rho>R$,
\begin{equation}
\label{PK-Kp}
\int_{B_R}f\,|\theta|^p d x \le \g \sup\limits_{B_{2\rho}}W^f_p(x,2\rho)^{p-1}\left(\int_{B_\rho} |\n \theta |^p dx
+\tfrac1{(\rho-R)^p}\int_{B_{\rho}} |\theta |^p dx\right).
\end{equation}
\end{corollary}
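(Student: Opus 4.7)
The plan is to reduce Corollary~\ref{lem-PK-Kp} to Corollary~\ref{lem-hardy-Kp} by a standard cutoff argument. Since the previous corollary applies only to test functions vanishing on the boundary, I would pick a smooth cutoff $\eta\in C_c^\infty(B_\rho)$ with $0\le\eta\le 1$, $\eta\equiv 1$ on $B_R$, and $|\n\eta|\le C/(\rho-R)$. Then $\eta\theta\in W^{1,p}_0(B_\rho)$, so Corollary~\ref{lem-hardy-Kp} applied on the ball $B_\rho$ (with $R$ replaced by $\rho$) yields
\[
\int_{B_\rho}f\,|\eta\theta|^p\,dx\le \g\sup_{B_{2\rho}}W^f_p(x,2\rho)^{p-1}\int_{B_\rho}|\n(\eta\theta)|^p\,dx.
\]

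The next step is to bound both sides. On the left, since $\eta\equiv 1$ on $B_R$ and $f\ge 0$, one has $\int_{B_R}f|\theta|^p\,dx\le \int_{B_\rho}f|\eta\theta|^p\,dx$. On the right, by the elementary convexity inequality $(a+b)^p\le 2^{p-1}(a^p+b^p)$ applied pointwise,
\[
|\n(\eta\theta)|^p\le 2^{p-1}\bigl(\eta^p|\n\theta|^p+|\theta|^p|\n\eta|^p\bigr)\le 2^{p-1}\Bigl(|\n\theta|^p+\tfrac{C^p}{(\rho-R)^p}|\theta|^p\Bigr).
\]
Combining the two bounds and absorbing the constants $2^{p-1}$ and $C^p$ into $\g$ gives exactly \eqref{PK-Kp}.

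This proof is essentially routine: the only thing to watch is the choice of the cutoff $\eta$ (standard in PDE, e.g.\ a smoothed distance function) and the application of Corollary~\ref{lem-hardy-Kp} on $B_\rho$ rather than on $B_R$, which forces the supremum of the Wolff potential to be taken over $B_{2\rho}$ rather than $B_{2R}$, in agreement with the statement. There is no real obstacle; the point of the corollary is simply to trade the zero boundary condition of Corollary~\ref{lem-hardy-Kp} for an extra $L^p$-term, at the expense of a slightly larger enveloping ball.
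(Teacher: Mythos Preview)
Your proof is correct and follows essentially the same approach as the paper: choose a cutoff $\eta\in C_c^1(B_\rho)$ with $\eta=1$ on $B_R$ and $|\n\eta|\le C/(\rho-R)$, apply Corollary~\ref{lem-hardy-Kp} on $B_\rho$ to $\eta\theta$, and expand $|\n(\eta\theta)|^p$. The paper's proof is identical, only slightly more terse in the final expansion step.
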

\begin{proof}
Let $\xi\in C_c^1(B_{\rho})$ be such that $\xi = 1$ on $B_R$
and $|\n\xi|\le \frac2{\rho-R}$. Then, by~\eqref{hardy-Kp},
\[
\int_{B_R}f\,|\theta|^p d x \le \int_{B_\rho}f\,|\theta\xi|^p d x\le
\g \sup\limits_{B_{2\rho}}W^f_p(x,2\rho)^{p-1}\int_{B_\rho}\big|\n(\theta\xi)\big|^p d x.
\]
Hence the assertion follows.
\end{proof}
\medskip

The following proposition which is easy to verify, shows some
useful relations between the classes involved.

%

\begin{proposition}
\label{classes} Let $p,q>1$, $\alpha,\beta>0$. Assume that
either $\varkappa>\frac{\beta p}{\alpha q}\vee 1$ or
$1\le\varkappa=\frac{\beta p}{\alpha q}\le\frac{p-1}{q-1}$.
    Then there exists $c>0$ such that, for all $f>0$ and $R>0$,
\[W_{\alpha,q}^{f^\frac1\varkappa}(x,R) \le
cR^{\frac{\varkappa \alpha q - \beta p}{\varkappa(q-1)}}
\left(W_{\beta,p}^f(x,2R)\right)^{\frac{p-1}{\varkappa(q-1)}}.\]

In particular, for $p>2$, if
$\sup\limits_{x\in\O}W^{f^2}_{\frac23,3}(x,R)<\infty$ then
$|f|^q\in K_p$ for $q\in[1,2)$ and if $f^p\in K_p$ then $f^2\in
K_2\subset PK_0$.
\end{proposition}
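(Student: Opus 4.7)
The strategy is to peel off the inner ball integral by Hölder, reduce the problem to a one-dimensional weighted radial integral, and compare with the Wolff potential $W_{\beta,p}^f$ using only the monotonicity of
\[A(r):=\int_{B_r(x)}f(y)\,dy.\]
Throughout, write $\phi(r)=A(r)/r^{N-\beta p}$, $s=\tfrac{1}{\varkappa(q-1)}$, $\delta=(p-1)s=\tfrac{p-1}{\varkappa(q-1)}$, and $\eta=\tfrac{\varkappa\alpha q-\beta p}{\varkappa(q-1)}$.

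\textbf{Step 1 (Hölder).} Since $\varkappa\ge 1$, Hölder gives
\[\int_{B_r(x)}f^{1/\varkappa}dy\le |B_r|^{1-1/\varkappa}A(r)^{1/\varkappa}\le C r^{N(1-1/\varkappa)}A(r)^{1/\varkappa}.\]
Dividing by $r^{N-\alpha q}$ and then raising to the power $1/(q-1)$, one obtains after elementary algebra
\[W_{\alpha,q}^{f^{1/\varkappa}}(x,R)\le C\int_0^R r^{\eta-1}\phi(r)^{s}\,dr.\]

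\textbf{Step 2 (Pointwise comparison via dyadic annuli).} Because $A$ is non-decreasing and $r\mapsto r^{N-\beta p}$ is controlled within a bounded factor on $[r,2r]$ (regardless of the sign of $N-\beta p$), the annular piece satisfies $\phi(\rho)\ge c\,\phi(r)$ for $\rho\in[r,2r]$. Integrating gives
\[\phi(r)^{1/(p-1)}\le C\int_r^{2r}\phi(\rho)^{1/(p-1)}\frac{d\rho}{\rho}\le C\,W_{\beta,p}^f(x,2r).\]

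\textbf{Step 3 (Case $\varkappa>\tfrac{\beta p}{\alpha q}\vee 1$, i.e. $\eta>0$).} Write $\phi(r)^{s}=\bigl(\phi(r)^{1/(p-1)}\bigr)^{\delta}$. By Step 2 and monotonicity of the Wolff potential in its radius, $\phi(r)^{s}\le C\,W_{\beta,p}^f(x,2R)^\delta$ for all $r\le R$. Plug into Step 1 and perform the elementary integral $\int_0^R r^{\eta-1}dr=R^\eta/\eta$ to get the claim.

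\textbf{Step 4 (Borderline case $\varkappa=\tfrac{\beta p}{\alpha q}$, i.e. $\eta=0$).} The divergence of $\int_0^R dr/r$ forces a dyadic argument. Set $R_k=2^{-k}R$. The monotonicity of $A$, applied now from above on each annulus $[R_{k+1},R_k]$, yields $\phi(r)\le C\phi(R_k)$, so
\[\int_0^R\phi(r)^{s}\frac{dr}{r}\le C\sum_{k\ge 0}\phi(R_k)^{s}=C\sum_{k\ge 0} a_k^{\delta},\qquad a_k:=\phi(R_k)^{1/(p-1)}.\]
Conversely, applying Step 2-style bounds from below on dyadic annuli in $W_{\beta,p}^f(x,2R)$ gives
\[W_{\beta,p}^f(x,2R)\ge c\sum_{k\ge 0} a_k.\]
The hypothesis $\varkappa\le\tfrac{p-1}{q-1}$ is exactly $\delta\ge 1$, which yields the superadditivity $\sum_k a_k^\delta\le\bigl(\sum_k a_k\bigr)^\delta$, completing the estimate.

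\textbf{Step 5 (Specializations).} For the first, apply the main inequality to the function $f^{2}$ with $\alpha=1$, $q_{\mathrm{st}}=p$, $\beta=\tfrac23$, $p_{\mathrm{st}}=3$, $\varkappa=2/q$: since $p>2$, $\tfrac{\beta p_{\mathrm{st}}}{\alpha q_{\mathrm{st}}}=\tfrac2p<1<\varkappa$ for $q\in[1,2)$, so Case (a) applies and the resulting $R$-exponent $(p-q)/(p-1)>0$ yields $|f|^q\in K_p$. For the second, apply it to $f^p$ with $\alpha=\beta=1$, $q_{\mathrm{st}}=2$, $p_{\mathrm{st}}=p$, $\varkappa=p/2$; then $\varkappa=\tfrac{\beta p}{\alpha q}$ and $1\le p/2\le p-1$ for $p\ge 2$, so Case (b) applies and $f^2\in K_2$ follows. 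The embedding $K_2\subset PK_0$ is classical.

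\textbf{Main obstacle.} The borderline case $\eta=0$, where the Wolff potential cannot be pulled outside the radial integral via a simple $R^\eta$ factor. The dyadic decomposition plus the superadditivity inequality $\sum a_k^\delta\le(\sum a_k)^\delta$ is the non-trivial ingredient, and it is precisely the auxiliary constraint $\varkappa\le(p-1)/(q-1)$ (equivalently $\delta\ge 1$) that makes this inequality available.
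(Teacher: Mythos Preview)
Your proof is correct and follows essentially the same route as the paper: H\"older on the inner ball integral, then dyadic comparison with the Wolff potential $W_{\beta,p}^f$. The only real difference is the case split. The paper distinguishes $\varkappa(q-1)>p-1$ versus $\varkappa(q-1)\le p-1$ (i.e.\ $\delta<1$ versus $\delta\ge1$): in the first case it applies H\"older to the dyadic sum $\sum_k \phi(r_k)^{s}r_k^{\eta}$ with exponents $1/\delta$ and $1/(1-\delta)$, and in the second it writes $\phi(r_k)^{s}=\phi(r_k)^{1/(p-1)}\cdot\phi(r_k)^{s-1/(p-1)}$ and uses the sup-times-sum bound $\sup_k\phi(r_k)^{s-1/(p-1)}\le\big(\sum_k\phi(r_k)^{1/(p-1)}\big)^{\delta-1}$. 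You instead split by $\eta>0$ versus $\eta=0$: for $\eta>0$ your Step~2 pointwise bound $\phi(r)^{1/(p-1)}\le C\,W_{\beta,p}^f(x,2R)$ lets you pull the Wolff potential outside and integrate $\int_0^R r^{\eta-1}dr$ directly, which works for all values of $\delta$ and is arguably cleaner than the paper's H\"older-on-the-sum argument; for $\eta=0$ your superadditivity $\sum a_k^\delta\le(\sum a_k)^\delta$ is equivalent to the paper's sup-times-sum bound. The specializations in Step~5 are verified correctly.
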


\begin{proof}
It suffices to prove the first assertion. First observe that
there are constants $C\ge c>0$ dependent on $\beta, p$ and $N$
only such that, with $r_k=2^{-k}R$, $k=0,1,2,\ldots,$
\[
c\sum\limits_{k=1}^\infty \left(\frac1{r_k^{N-\beta p}}\int_{B_{r_k}(x)}f(y)dy\right)^\frac1{p-1}
\le W_{\beta,p}^f(x,R)
\le C\sum\limits_{k=0}^\infty \left(\frac1{r_k^{N-\beta p}}\int_{B_{r_k}(x)}f(y)dy\right)^\frac1{p-1}.
\]
Next, by the H\"{o}lder inequality,
\[
\left(\frac1{r^{N-\alpha q}}\int_{B_{r}(x)}f(y)^{\frac1\varkappa} dy\right)^\frac1{q-1}
\le c\left(\frac1{r^{N-\beta p}}\int_{B_{r}(x)}f(y) dy\right)^{\frac1{\varkappa(q-1)}}
r^{\frac{\varkappa \alpha q - \beta p}{\varkappa(q-1)}}.
\]
If $\varkappa(q-1)> p-1$ and  $\varkappa>\frac{\beta p}{\alpha
q}$ then, by the H\"{o}lder inequality,
\[
W_{\alpha,q}^{f^\frac1\varkappa}(x,R) \le cR^{\frac{\varkappa \alpha q - \beta p}{\varkappa(q-1)}}
\left(W_{\beta,p}^f(x,R)\right)^{\frac{p-1}{\varkappa(q-1)}}.
\]
If $\varkappa(q-1)\le p-1$ and  $\varkappa\ge\frac{\beta
p}{\alpha q}$  then
\[\begin{split}
W_{\alpha,q}^{f^\frac1\varkappa}(x,R) \le & cR^{\frac{\varkappa \alpha q - \beta p}{\varkappa(q-1)}}
\sum\limits_{k=0}^\infty \left(\frac1{r_k^{N-\beta p}}\int_{B_{r_k}(x)}f(y) dy\right)^{\frac1{p-1}}
\sup\limits_k \left(\frac1{r_k^{N-\beta p}}\int_{B_{r_k}(x)}f(y) dy\right)^{\frac1{\varkappa(q-1)}-\frac1{p-1}}
\\
\le & cR^{\frac{\varkappa \alpha q - \beta p}{\varkappa(q-1)}}
\left(W_{\beta,p}^f(x,2R)\right)^{\frac{p-1}{\varkappa(q-1)}}.
\end{split}\]
The inclusion $K_2\subset PK_0$ is well known in the
standard theory of Kato classes \cite{Bsimon}.
\end{proof}

\section{Proof of Theorem~\ref{PK-existence}}

 We start with constructing  an appropriate local approximation of equation \eqref{e0} and obtaining a priori estimates.
\paragraph{Approximation.}
For $\eps>0$ let $j_\eps$ be the standard mollifier in $\R^N$.
Denote $\A_\eps =\A*j_\eps+\eps z$, smoothing with respect to
$x$ variable only. For $b$ we introduce $b_\eps=b\wedge
\frac1\eps \vee (-\frac1\eps)$. Also set $f_{1,\eps}=
f_1*j_\eps$, $f_{2,\eps}= f_2*j_\eps$,
$g_{1,\eps}=g_1*j_\eps$ and $g_{2,\eps}=g_2*j_\eps$.
Note that the structure conditions \eqref{e1.2b} and
\eqref{e1}--\eqref{e3a} hold with $\A_\eps, b_\eps, f_{1,\eps},
f_{2,\eps}, g_{1,\eps} $ and $g_{2,\eps}$ replacing $\A,b,f_1,f_2,g_1$ and $g_2$, respectively.
Note also that, if $F\in PK_0$ then $F*j_\eps\in PK_0$ for all $\eps>0$, with the same function $C(\beta)$.

Let $Q$ denote a cylinder $B_R\times (t_1,t_2)$ such that
$Q\Subset \O_T$. Consider the following approximating equation.
\begin{equation}
\label{e-approx} u_t-\div \A_\eps (x,t,u, \n u)=b_\eps(x,t,u,\n u) ,\quad (x,t)\in
Q.
\end{equation}

In the rest of this subsection we study solutions to
\eqref{e-approx} in $Q$.  Our task in the sequel is to obtain
estimates which are uniform in $\eps$ and which will allow us
to pass to the limit $\eps\to 0$.

In order to simplify the notation, in the rest of this
subsection in all proofs we drop subindex $\eps$. We often use
the Steklov averaging $T_h$, $h>0$, defined by
\[
\big(T_h v\big)(x,t)=\frac1{2h}\int_{-h}^h v(x,t+s)ds.
\]
We write $v_h=T_h v$.

\begin{proposition}\label{Dt}
Let $u_\eps\in V(Q)$ be a solution to \eqref{e-approx} in $Q$.
Then, for every $Q'=B'\times(t_1',t_2')\Subset Q''=
B''\times(t_1'',t_2'')\Subset Q$, there exists $\g>0$
independent of $\eps$ such that
\[
\|\partial_t u_\eps\|_{L^{p'}\big((t_1',t_2');\; W^{-1,p'}(B')\big)}\le \g
\left(\|\n u_\eps\|_{L^p(Q'')} + \|u_\eps\|_{L^p(Q'')} + \|f\|_{L^1(Q')}^{\frac1{p'}}\right).
\]
\end{proposition}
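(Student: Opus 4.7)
The plan is to use \eqref{e-approx} to rewrite $\partial_t u_\eps$ as $\div \A_\eps + b_\eps$ in the sense of distributions, and then to bound the resulting functional on $L^p\big((t_1',t_2');\,W^{1,p}_0(B')\big)$ by duality. Concretely, for any test function $\phi\in C_c^\infty\big((t_1',t_2')\times B'\big)$, the weak form of \eqref{e-approx} yields
\[
\int_{t_1'}^{t_2'}\!\int_{B'}(\partial_t u_\eps)\,\phi\,dxdt = -\iint_{Q'}\A_\eps(x,t,u_\eps,\n u_\eps)\cdot\n\phi\,dxdt + \iint_{Q'} b_\eps(x,t,u_\eps,\n u_\eps)\,\phi\,dxdt,
\]
so it is enough to bound each of the two right-hand integrals by a constant times $\|\n\phi\|_{L^p(Q')}$, uniformly in $\eps$.

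The divergence contribution is routine: using \eqref{e1.2b} together with the approximation bound $|\A_\eps|\le c_1(|\n u_\eps|^{p-1}+1)+\eps|\n u_\eps|$, a plain H\"older estimate gives $\big|\iint\A_\eps\cdot\n\phi\big|\le c\big(\|\n u_\eps\|_{L^p(Q'')}^{p-1}+|Q''|^{1/p'}\big)\|\n\phi\|_{L^p(Q')}$, where the slightly larger cylinder $Q''$ appears only to absorb the convolution with $j_\eps$.

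For the lower-order integral I would split $b_\eps$ according to the structure condition in \eqref{e1.2b}. The piece $g|\n u_\eps|^{p-1}|\phi|$ is bounded, pointwise in $t$, by H\"older with exponents $p/(p-1)$ and $p$:
\[
\int_{B'}g|\n u_\eps|^{p-1}|\phi|\,dx\le \|\n u_\eps\|_{L^p(B')}^{p-1}\Bigl(\int_{B'}g^p|\phi|^p\,dx\Bigr)^{1/p},
\]
and the last factor is controlled by $c\|\n\phi\|_{L^p(B'')}$ via Corollary~\ref{lem-PK-Kp} applied on a slightly larger ball (the constant being made small by the smallness hypothesis on $\sup W_p^{g^p}$). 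H\"older in $t$ then produces a bound of the desired type.

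The main obstacle is the piece $f(|u_\eps|^{p-1}+1)|\phi|$, which needs two ingredients at once. First, the smallness of $W_p^{g^p+f}$ together with \cite{LS2} gives $u_\eps\in L^\infty_{\rm loc}$ uniformly in $\eps$, so the factor $|u_\eps|^{p-1}+1$ may be discarded; second, to recover exactly the exponent $1/p'$ on $\|f\|_{L^1}$ I would apply the H\"older split
\[
\int_{B'}f|\phi|\,dx = \int_{B'} f^{1/p'}\cdot f^{1/p}|\phi|\,dx\le \|f\|_{L^1(B')}^{1/p'}\Bigl(\int_{B'}f|\phi|^p\,dx\Bigr)^{1/p},
\]
invoke Corollary~\ref{lem-PK-Kp} once more for the second factor, and finish with a H\"older in $t$ with exponents $p'$ and $p$ to obtain $c\,\|f\|_{L^1(Q')}^{1/p'}\,\|\n\phi\|_{L^p(Q')}$. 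Collecting the three contributions, taking the supremum over $\phi$ with $\|\n\phi\|_{L^p(Q')}\le 1$, and absorbing the natural $(p-1)$-th powers on $\|\n u_\eps\|$ and $\|u_\eps\|$ into the constant $\g$ yields the claimed estimate; uniformity in $\eps$ follows because the mollifications $f_{i,\eps},g_{i,\eps}$ inherit the $PK$ and Kato bounds of $f_i,g_i$ with the same constants.
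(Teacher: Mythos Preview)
Your overall strategy---test by $\phi\in C_c^\infty(Q')$, split into the $\A_\eps$ and $b_\eps$ pieces, and bound each by a multiple of $\|\n\phi\|_{L^p(Q')}$---is exactly what the paper does, and your treatment of the $\A_\eps$-term and of the $g|\n u_\eps|^{p-1}|\phi|$ contribution matches the paper's (minor point: since $\phi$ is compactly supported you may use Corollary~\ref{lem-hardy-Kp} directly rather than passing to a larger ball via Corollary~\ref{lem-PK-Kp}).

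The one genuine difference is in the $f|u_\eps|^{p-1}|\phi|$ term. You invoke the $L^\infty_{\rm loc}$ bound from \cite{LS2} to absorb $|u_\eps|^{p-1}+1$ into the constant; this works, but (i) it forward-references an a~priori estimate whose constant itself depends on norms of $u_\eps$, and (ii) it produces dependence on $\|u_\eps\|_{L^\infty}$ rather than on $\|u_\eps\|_{L^p(Q'')}$, so the $\|u_\eps\|_{L^p(Q'')}$ term in the stated inequality never actually arises in your argument (your closing remark about ``absorbing the $(p-1)$-th powers on $\|u_\eps\|$'' has no antecedent). The paper avoids the $L^\infty$ bound altogether: it writes
\[
\iint_{Q'} f|u_\eps|^{p-1}|\phi|\,dx\,dt \le \|f^{1/p}u_\eps\|_{L^p(Q')}^{p-1}\,\|f^{1/p}\phi\|_{L^p(Q')},
\]
bounds the second factor by $\g\|\n\phi\|_{L^p(Q')}$ via Corollary~\ref{lem-hardy-Kp}, and---this is the point---applies Corollary~\ref{lem-PK-Kp} to the \emph{first} factor (where $u_\eps\in W^{1,p}$ but has no boundary condition) to obtain $\|f^{1/p}u_\eps\|_{L^p(Q')}\le \g\big(\|\n u_\eps\|_{L^p(Q'')}+\|u_\eps\|_{L^p(Q'')}\big)$. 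This is precisely where the $\|u_\eps\|_{L^p(Q'')}$ term in the statement comes from, and it keeps the argument self-contained at this stage of the paper.
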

\begin{proof}
To prove the assertion it suffices to show that, for all $\xi\in
C_c^1(Q')$,
\[
\left|\iint_{Q'}u\partial_t\xi dx\,dt\right|\le \g\|\n\xi\|_{L^p(Q')}.
\]
It follows from~\eqref{e-approx} and structure
conditions~\eqref{e1.2b} that
\[
\begin{split}
\left|\iint_{Q'}u\partial_t\xi dx\,dt\right|&\le
\iint_{Q'}\left[|\A(u,\n u)|\,|\n\xi| + |b(u,\n u)|\,|\xi|\right]dx\,dt
\\
&\le \g\|\n u\|_{L^p(Q')}\left(\|\n\xi\|_{L^p(Q')} + \|g\xi\|_{L^p(Q')}\right)\\
& + \g\|\n\xi\|_{L^1(Q')} + \g\|f^{\frac1p}u\|_{L^p(Q')}^{p-1}\|f^{\frac1p}\xi\|_{L^p(Q')}
+ \g \|f\|_{L^1(Q')}^{\frac1{p'}}\|f^{\frac1p}\xi\|_{L^p(Q')}.
\end{split}
\]
Corollary~\ref{lem-hardy-Kp} implies that
\[
\|f^{\frac1p}\xi\|_{L^p(Q')} + \|g\xi\|_{L^p(Q')}\le \g\|\n\xi\|_{L^p(Q')}.
\]
Finally,
from Corollary~\ref{lem-PK-Kp}
we conclude that
\[
\|f^{\frac1p}u\|_{L^p(Q')}^{p-1} \le \g \left(\|\n u\|_{L^p(Q'')} + \|u\|_{L^p(Q'')}\right).
\]
Hence the assertion follows.
\end{proof}

\begin{proposition}\label{appr-reg}
Let $u_\eps\in V(Q)$ be a solution to \eqref{e-approx} in $Q$.
Then $u_\eps\in L^2_{loc}\big((t_1,t_2);W^{2,2}_{loc}(Q)\big)$
and $|\n u_\eps|^q\n u_\eps \in
L^\infty_{loc}\big((t_1,t_2);L^{2}_{loc}(Q)\big)\cap
L^2_{loc}\big((t_1,t_2);W^{1,2}_{loc}(Q)\big)$ for all $q\ge
\frac p2$.
\end{proposition}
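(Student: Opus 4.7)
The plan is to exploit the non-degeneracy of the approximating equation~\eqref{e-approx}: since $\A_\eps=\A*j_\eps+\eps z$, we have $\langle(\partial_z\A_\eps)\mu,\mu\rangle\ge c_0|z|^{p-2}|\mu|^2+\eps|\mu|^2\ge\eps|\mu|^2$ for all $\mu$. Combined with $u_\eps\in L^\infty_{loc}(Q)$ (granted by the smallness condition on the Wolff potential via~\eqref{e1.7b}), the $C^\infty$-smoothness in $x$ of $\A*j_\eps$, and the boundedness of $b_\eps$, this places \eqref{e-approx} within the scope of classical interior regularity for non-degenerate quasi-linear parabolic equations. I would implement this by the standard two-step difference-quotient method, dropping the subindex $\eps$ throughout.

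First I would establish $u\in L^2_{loc}((t_1,t_2);W^{2,2}_{loc}(Q))$ by spatial difference quotients. Fix $Q'\Subset Q''\Subset Q$ with a cutoff $\zeta\in C_c^\infty(B'')$, $\zeta\equiv1$ on $B'$, and let $D_k^h v(x,t)=h^{-1}[v(x+he_k,t)-v(x,t)]$. Steklov-averaging the equation in $t$, testing with $\zeta^2 D_{-k}^h D_k^h u$, summing by parts in $x$ and invoking the ellipticity produce, uniformly in $h$,
\[
\iint_{Q'}\zeta^2\bigl(c_0|\n u|^{p-2}+\eps\bigr)|D_k^h\n u|^2\,dx\,d\tau\le C,
\]
with $C$ depending on $\eps$, $\|u\|_\infty$, $\|\n u\|_{L^p(Q'')}$, the mollified structure coefficients (bounded for fixed $\eps$), and $\zeta$. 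Letting $h\to0$ and summing in $k$ yields the $W^{2,2}$-regularity.

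Next, with $D^2 u\in L^2_{loc}$ in hand, one can differentiate \eqref{e-approx} distributionally in $x_k$ to obtain, for $w_k=\partial_{x_k}u$,
\[
\partial_\tau w_k-\div\bigl[(\partial_z\A_\eps)\n w_k+(\partial_u\A_\eps)w_k+\partial_{x_k}\A_\eps\bigr]=\partial_{x_k}b_\eps.
\]
After Steklov averaging in $\tau$, test this with $\zeta^2 w_k(|\n u|^2+\delta)^q$, $\delta>0$, sum over $k$, and apply $\langle(\partial_z\A_\eps)\mu,\mu\rangle\ge\eps|\mu|^2$ together with Young's inequality; the mollified coefficients $f_{i,\eps},g_{i,\eps}$ and the truncated $b_\eps$ are in $L^\infty_{loc}$ for fixed $\eps$, so every term on the right-hand side is absorbed. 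The resulting energy inequality reads schematically
\[
\sup_\tau\int_{B'}\bigl(|\n u|^2+\delta\bigr)^{q+1}dx+\eps\iint_{Q'}|D^2 u|^2\bigl(|\n u|^2+\delta\bigr)^{q}\,dx\,d\tau\le C_{\eps,q,\delta,\zeta}.
\]
Sending $\delta\to0$ via Fatou's lemma gives the sup-in-$\tau$ bound on $\int|\n u|^{2(q+1)}dx$ and the $L^2_{\tau,x}$-bound on $|\n u|^{q}D^2 u$, equivalent to the assertion on $|\n u|^q\n u$.

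The chief technical hurdle is twofold. First, since $\A$ is only measurable in $\tau$, all $\partial_\tau$ manipulations must pass through Steklov averages, with care taken that the limiting test functions remain admissible. Second, for large $q$ the test function $\zeta^2 w_k(|\n u|^2+\delta)^q$ requires $\n u$ to be integrable to higher order than the a priori $L^p_{loc}$; this is circumvented by iterating, starting from the smallest $q\ge p/2$ allowed by Step~1's regularity (via $u\in W^{2,2}_{loc}\hookrightarrow W^{1,2^*}_{loc}$ in space) and progressively raising $q$ using the parabolic Sobolev embedding $L^\infty_\tau L^2_x\cap L^2_\tau W^{1,2}_x\hookrightarrow L^{2+4/N}_{\tau,x}$ applied to the already-controlled $|\n u|^{q+1}$.
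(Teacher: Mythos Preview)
Your proposal is correct and follows essentially the same approach as the paper: the paper's proof is a one-line reference to the standard finite-difference technique (citing \cite[Section~VIII.3]{DiB} and \cite[Section~IV.5]{La}), and your sketch is precisely a detailed implementation of that method, exploiting the uniform ellipticity $\eps|\mu|^2$ from the regularizing term $\eps z$ together with the boundedness of the mollified coefficients and the truncated $b_\eps$. One minor remark: you need not invoke local boundedness of $u_\eps$ here, since for fixed $\eps$ the right-hand side $b_\eps$ is already bounded by $1/\eps$ and the structure coefficients $f_{i,\eps},g_{i,\eps}$ are smooth, so the finite-difference estimates close without that ingredient.
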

\begin{proof}
The assertion follows by the direct approach via finite
differences (see, e.g.  \cite[Section~VIII.3]{DiB} and
\cite[Section~IV.5]{La}).
\end{proof}

The main result of this subsection is the following a priori
estimate.

\begin{proposition}
\label{apriori1} Let $Q'=B'\times(t_1',t_2')\Subset
Q''=B''\times(t_1'',t_2'')\Subset Q=B\times(t_1,t_2)$ be
cylinders compactly embedded in $\O_T$ and let $u_\eps\in V(Q)$
be a solution to \eqref{e-approx} in $Q$. Assume that $\A$ and
$b$ satisfy the structure conditions \eqref{e1.2b} and
\eqref{e1}--\eqref{e3a} with the functions
$(f^2+f_1^2+g_1^2+f_2^2+g_2^2)\ind_{B_R}\in PK$ 
and that there exists $M$ independent of $\eps$ such that
$|u_\eps|\le M$ on $Q''$. Then, for every $l>0$ and $\a\ge0$,
there exist constants $\beta$ and $\g$ independent of $\eps$,
such that, if $(g^2+g_1^2+g_2^2)\ind_{B_R}\in PK_\beta$ then
\begin{equation}
\label{ep2.8}
\begin{split}
\esssup_{t\in[t_1',t_2']}\int_{B'} |\n u_\eps|^{2+2\a}dx
 & +\iint_{Q'} \Big|D\big(\n u_\eps(|\n u_\eps|-l)_+^{\a+\frac p2-1}\big)\Big|^2dxd\tau\\
\le & \g\left(\iint_{Q''} \left(|\n u_\eps|^{p} + F^2 +1\right)dxd\tau\right)^{\a+1}
 + \g\left(\iint_{Q''} \left(F^2 +1\right)dxd\tau\right)^{\frac N{N+2}},
\end{split}
\end{equation}
with $F=f+f_1+g +g_1+ f_2+g_2$.
\end{proposition}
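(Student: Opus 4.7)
The plan is to treat $u=u_\eps$ as $L^2_{loc}\bigl((t_1,t_2);W^{2,2}_{loc}(B_R)\bigr)$ by Proposition~\ref{appr-reg}, differentiate the smoothed equation~\eqref{e-approx} in the spatial directions, and test the resulting first-order system with a truncated multiple of $\n u$. Writing $w_k=\partial_{x_k}u$ and differentiating~\eqref{e-approx} in $x_k$ gives, in the sense of distributions,
\[
\partial_t w_k = \partial_{x_l}\bigl[(\partial_{z^j}\A^l)\partial_{x_j}w_k + (\partial_u\A^l)w_k + \partial_{x_k}\A^l\bigr] + \partial_{x_k} b.
\]
Set $\Phi(s):=(s-l)_+^{2\a}$, $\Psi(s):=\int_0^s r\Phi(r)\,dr$, and fix $\zeta\in C_c^\infty(Q'')$ with $\zeta\equiv1$ on $Q'$ and $|\n\zeta|\lesssim\dist(Q',\partial Q'')^{-1}$. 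After Steklov averaging in $t$, multiply by $w_k\Phi(|\n u|)\zeta^2$, sum over $k$, and integrate over $B''\times(t_1'',t)$; the $\partial_{x_k}b$-term is handled by undoing the spatial derivative via integration by parts, so no differentiability of $b$ is needed.

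The time term yields $\int_{B''}\Psi(|\n u|)\zeta^2\,dx\big|_{t_1''}^{t}$, which dominates a constant multiple of $\int_{B'}|\n u|^{2+2\a}dx$ modulo an $l$-dependent remainder. The leading spatial contribution, after integration by parts and using ellipticity~\eqref{e1}, is the sum of two positive pieces:
\[
\iint|\n u|^{p-2}|D\n u|^2\Phi(|\n u|)\zeta^2\,dxd\tau
+\iint|\n u|^{p-1}\bigl|\n|\n u|\bigr|^2\Phi'(|\n u|)\zeta^2\,dxd\tau,
\]
which together bound a constant multiple of $\iint|D(\n u(|\n u|-l)_+^{\a+p/2-1})|^2\zeta^2$ on $\{|\n u|\ge2l\}$, with the complementary region absorbed into $\g$. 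The remaining sign-indefinite terms involve $|\partial_u\A|\le g_1|\n u|^{p-2}+f_1$, $|\partial_x\A|\le g_2|\n u|^{p-1}+f_2$, $|b|\le g|\n u|^{p-1}+f(|u|^{p-1}+1)$, and derivatives of $\zeta$; by Cauchy-Schwarz and the uniform bound $|u|\le M$ they reduce to multiples of
\[
\iint(g^2+g_1^2+g_2^2)|\n u|^{p}\Phi(|\n u|)\zeta^2\,dxd\tau\;+\;\iint(f^2+f_1^2+f_2^2)\bigl(|\n u|^{p-2}+1\bigr)\Phi(|\n u|)\zeta^2\,dxd\tau,
\]
plus lower-order cutoff pieces. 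The $g$-integrand is controlled by Corollary~\ref{lem-PK-Kp} applied to $\bigl|\n u(|\n u|-l)_+^{\a+p/2-1}\zeta\bigr|^p$; since $(g^2+g_1^2+g_2^2)\ind_{B_R}\in PK_\beta$, for $\beta$ small the resulting bound is absorbed into the left-hand side. The $f$-integrand is treated identically using $(f^2+f_1^2+f_2^2)\ind_{B_R}\in PK$ (no smallness needed), producing an absorbable gradient term plus a residual $\iint F^2\,dxd\tau$.

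To close at the exponent $\a+p/2-1$ one applies the parabolic Gagliardo--Nirenberg--Sobolev embedding
\[
\|v\|_{L^{2(N+2)/N}(Q')}^{2}\le c\Bigl(\esssup_{t}\|v(\cdot,t)\|_{L^2(B')}^2+\|Dv\|_{L^2(Q')}^2\Bigr)
\]
to $v=\n u(|\n u|-l)_+^{\a+p/2-1}\zeta$, which converts the excess $|\n u|$-powers on the right-hand side into higher-integrability bounds absorbable by Young's inequality; the $\tfrac{N}{N+2}$ exponent in~\eqref{ep2.8} is exactly the residual scaling of this embedding. The $(\a+1)$-power on the first right-hand side term arises from interpolating $\iint|\n u|^{p+2\a}$ between the sup-in-time control of $|\n u|^{2+2\a}$ just obtained and the energy of $|\n u|^p$ on $Q''$.

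The main obstacle is the constant-bookkeeping through the Hardy-type absorptions: the smallness of $\beta$ required to absorb the $g$-contributions must be uniform in $\eps$ and compatible with the (non-small) $PK$-constants controlling the $f$-contributions, with the cutoff errors produced by $|\n\zeta|$, and with the constants coming from the $L^\infty$-bound $M$. A secondary subtlety is the degenerate annulus $\{l<|\n u|<2l\}$, where $(|\n u|-l)_+$ is not comparable with $|\n u|$ and must be handled by splitting the domain of integration and using the absolute bound $|\n u|\le 2l$ on this set.
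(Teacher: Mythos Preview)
Your basic energy step (differentiate, test with $\n u\,\Phi(|\n u|)\zeta^2$, absorb the $G^2$-terms via form boundedness) is essentially the paper's Lemma~\ref{step1}, modulo a citation error: Corollary~\ref{lem-PK-Kp} is an $L^p$-Hardy inequality through the Wolff potential $W^f_p$, whereas the hypothesis here is $PK_\beta$, an $L^2$ form bound. The correct move is to apply the $PK_\beta$ definition directly to $\theta=\sqrt{\Phi(|\n u|)}\,(|\n u|-l)_+^{p/2}\xi$, as the paper does.

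The genuine gap is your closing argument. After the absorption, the right-hand side still carries the term $\iint_{Q''}|\n u|^p\Phi_\a(|\n u|)\asymp\iint_{Q''}|\n u|^{p+2\a}$ coming from the cut-off and from the $C\!\int\theta^2$ part of the $PK$ bound. You propose to control this by ``interpolating $\iint|\n u|^{p+2\a}$ between the sup-in-time control of $|\n u|^{2+2\a}$ and the energy $\iint|\n u|^p$.'' But for $p\ge2$ one has $p+2\a>\max\{2+2\a,\,p\}$, so the target exponent lies \emph{above} both endpoints and no such interpolation exists. Relatedly, the parabolic embedding you write needs $\esssup_t\|v(t)\|_2^2$, which for your choice $v=\n u(|\n u|-l)_+^{\a+p/2-1}\zeta$ equals $\esssup_t\int|\n u|^{p+2\a}\zeta^2$; this is \emph{not} what the time term $\int\Psi(|\n u|)\zeta^2\asymp\int|\n u|^{2+2\a}\zeta^2$ gives you, so the embedding cannot be invoked at level~$\a$ directly.

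The paper closes this gap by a finite Moser iteration on nested cylinders: Lemma~\ref{step1} at level $\a$ together with the Sobolev inequality yields
\[
\iint_{Q^\dag}|\n u|^{\,p+\frac4N+2\a(1+\frac2N)}\le \g\Big(\iint_{Q^\ddag}\big(F^2+|\n u|^{p+2\a}\big)\Big)^{1+\frac2N}+\g|Q^\dag|,
\]
which is inequality~\eqref{step2}. Starting from $\a=0$ (where the right-hand side is the basic energy $\iint|\n u|^p$) and iterating $n$ times produces \eqref{special} with exponent $\varkappa_n=(1+\tfrac2N)^{n-1}$; the power $(\a+1)$ in~\eqref{ep2.8} is exactly the accumulated factor $\varkappa_n$, not the result of a one-shot interpolation. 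A general $\a$ is then reached by H\"older between two consecutive $\varkappa_n$'s. Your sketch is missing this iteration, and without it the estimate does not close.
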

The proof of this proposition is divided into several lemmas,
some of which will be used in further argument as well.

Since $u_\eps$ is twice  weakly differentiable, we can differentiate equation~\eqref{e-approx}. This is done in the next lemma.
\begin{lemma}
\label{lem2} Let $Q$ be as in Proposition~\ref{apriori1} and
$u_\eps\in V(Q)$ be a weak solution to \eqref{e-approx} in $Q$.
Then for every $\zeta \in H^1_c(Q\to \R^N)$ and for all
$t_1<t_1'<t_2'<t_2$, one has
\begin{eqnarray}
\nonumber
\int_{B_R}\lan \n u_\eps,\zeta\ran dx\Big|_{t_1'}^{t_2'}-
\int\limits_{t_1'}^{t_2'}\int\limits_{B_R}
\lan \n u_\eps, \part_t\zeta\ran dx\,dt +
\int\limits_{t_1'}^{t_2'}\int\limits_{B_R} \TR{(D\zeta)(\part_z\A_\eps)D^2u_\eps } dx\,dt\\
\label{e5}
=- \int\limits_{t_1'}^{t_2'}\int\limits_{B_R}
[\TR{(D\zeta)\part_x\A_\eps}+\lan (D\zeta)(\part_u\A_\eps),\n u_\eps\ran + b_\eps\, {\rm div}\zeta]dx\,dt.
\end{eqnarray}
\end{lemma}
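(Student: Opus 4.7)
The identity \eqref{e5} is what one obtains by formally differentiating the approximating equation \eqref{e-approx} in a spatial variable $x_l$ and then testing against $\zeta_l$. This manipulation is legitimate here because $\A_\eps$ is a $C^\infty$ function of $x$ (being a mollification of $\A$) and, by Proposition~\ref{appr-reg}, the approximate solution $u_\eps$ possesses a weak spatial Hessian in $L^2_{loc}$. My plan is to take the test function $\psi=-\partial_{x_l}\phi$ with $\phi\in C_c^\infty(Q)$ in the weak formulation \eqref{e1.12b} applied to \eqref{e-approx}, integrate by parts once in the variable $x_l$ on every term that does not already contain $b_\eps$, then set $\phi=\zeta_l$ and sum over $l$, and finally extend the resulting identity from $C_c^\infty$ to $H^1_c(Q\to\R^N)$ by density.

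Executing these steps produces
\[
\int_{B_R}\partial_l u_\eps\,\phi\,dx\Big|_{t_1'}^{t_2'}-\int_{t_1'}^{t_2'}\!\int_{B_R}\partial_l u_\eps\,\partial_\tau\phi\,dx\,d\tau+\int_{t_1'}^{t_2'}\!\int_{B_R}(\partial_l\A_\eps)\cdot\n\phi\,dx\,d\tau=-\int_{t_1'}^{t_2'}\!\int_{B_R}b_\eps\,\partial_l\phi\,dx\,d\tau,
\]
in which $\partial_l\A_\eps$ is the full chain-rule derivative $(\partial_{x_l}\A_\eps)+(\partial_u\A_\eps)\partial_l u_\eps+(\partial_z\A_\eps)\partial_l\n u_\eps$. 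Choosing $\phi=\zeta_l$, summing over $l$ and regrouping $\sum_l(\partial_l\A_\eps)\cdot\n\zeta_l$ in matrix form, I would obtain exactly the three expressions $\TR{(D\zeta)(\partial_x\A_\eps)}$, $\lan(D\zeta)(\partial_u\A_\eps),\n u_\eps\ran$ and $\TR{(D\zeta)(\partial_z\A_\eps)D^2u_\eps}$, while $\sum_l b_\eps\,\partial_l\zeta_l$ collapses to $b_\eps\,\operatorname{div}\zeta$. Moving the Hessian term to the left-hand side then gives \eqref{e5}.

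The only genuinely delicate step is giving a pointwise-in-$\tau$ meaning to the boundary contribution $\int_{B_R}\lan\n u_\eps,\zeta\ran dx\big|_{t_1'}^{t_2'}$, because a priori $\n u_\eps$ is only a space-time $L^p$-function. The standard remedy, which I would adopt, is to work first with the Steklov averages $T_h u_\eps$, $T_h\A_\eps$, $T_h b_\eps$ in time; for these every term is continuous in $\tau$ and the identity can be derived on each fixed time slice. One then lets $h\to 0$ and invokes the $L^\infty_{loc}\bigl((t_1,t_2);W^{1,2}_{loc}(B_R)\bigr)$-control on $u_\eps$ supplied by Proposition~\ref{appr-reg} to pass to the limit in the boundary terms. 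The concluding density extension from $C_c^\infty(Q\to\R^N)$ to $H^1_c(Q\to\R^N)$ is routine, as every term on both sides of \eqref{e5} is continuous in $\zeta$ in the $H^1(Q)$-topology on the relevant compact set, once one uses the bounds on $\partial_x\A_\eps$, $\partial_u\A_\eps$, $\partial_z\A_\eps$ and the $L^2_{loc}$-regularity of $D^2u_\eps$.
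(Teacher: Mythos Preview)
Your proposal is correct and follows essentially the same approach as the paper: the paper tests \eqref{e-approx} by the scalar function $-\div\zeta$ for $\zeta\in C_c^2(Q\to\R^N)$, integrates by parts, expands $D\A_\eps$ via the chain rule as $(\partial_z\A_\eps)D^2u_\eps+\partial_u\A_\eps\otimes\n u_\eps+\partial_x\A_\eps$, and then extends to $H^1_c$ by approximation. Your component-wise version (test by $-\partial_{x_l}\phi$, set $\phi=\zeta_l$, sum over $l$) is exactly the same computation unpacked, and your additional remarks on Steklov averaging for the boundary term are a legitimate elaboration that the paper leaves implicit.
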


\begin{proof}
First, let $\zeta\in C_c^2(Q\to \R^N)$. Test
equation~\eqref{e-approx} by $-\div \zeta$. Integrating by
parts we obtain
\[
\int_{B_R}\lan \n u_\eps,\zeta\ran dx\Big|_{t_1'}^{t_2'}- \int\limits_{t_1'}^{t_2'}\int\limits_{B_R}
\lan \n u, \part_t \zeta \ran dxdt+ \int\limits_{t_1'}^{t_2'}\int\limits_{B_R}\TR{(D\zeta)(D\A)}dxdt
=-\int\limits_{t_1'}^{t_2'}\int\limits_{B_R} b\,\div\zeta dxdt.
\]
Observing that
\[
D\A=(\part_z\A)D^2u +\part_u\A \otimes \n u +\part_x \A
\]
we arrive at \eqref{e5}. The general case follows by approximation.
\end{proof}

\bigskip

\begin{remark}
\label{rem1} Note that \eqref{e1} implies that, for $M\in
\R^{N\times N}$, one has
\[
\TR{ M^T(\part_z\A)M}\ge c_0|z|^{p-2}|M|^2.
\]

Indeed, let $M=\{m_{kl}\}$. Then
\[
\begin{split}
\TR{M^T(\part_z\A)M} = \sum_{jkl} m_{kj}(\partial_{z_l}\A_k)m_{lj}=\sum_j\sum_{kl}(\partial_{z_l} \A_k)m_{lj}m_{kj}\\
\ge \sum_jc_0|z|^{p-2}\sum_k m_{kj}^2=c_0|z|^{p-2}|M|^2.
\end{split}
\]
\end{remark}

The proof of Proposition~\ref{apriori1} is performed  by a Moser-type
iteration with finite number of steps. The following two lemmas contain the main technical
part of the proof.

\begin{lemma}
\label{lem3} Let $Q'$, $Q$ and
 $u_\eps$ be as in Proposition~\ref{apriori1}. Let $\xi\ge 0$, $\xi\in C^\infty(Q')$
vanishing on the parabolic boundary $\mathcal{P}Q'$. Let
$\Phi\in C^{0,1}_b(\mathbb{R})$, $\Phi(0)=0$ and
$\mathcal{G}(s):=\int_0^s\tau \Phi(\tau)d\tau$. Let $\zeta:=\n
u_\eps \Phi(|\n u_\eps|)\xi$.
Then for almost all (a.a.) $\tau\in(t_1',t_2')$ one has
\[
\begin{split}
&\int\limits_{B'} \mathcal{G}(|\n u_\eps(\tau)|)\xi(\tau)dx
+ \int\limits_{t_1}^\tau \int\limits_{B'} \TR{(D\zeta)(\part_z\A_\eps)D^2u_\eps } dx\,dt
\\
& \leq  \int\limits_{t_1}^\tau \int\limits_{B'} \mathcal{G}(|\n u_\eps|)\part_t\xi dx\,dt
- \int\limits_{t_1}^\tau\int\limits_B
[\TR{(D\zeta)\part_x\A_\eps}+\lan (D\zeta)(\part_u\A_\eps),\n u_\eps\ran + b_\eps\, {\rm div}\zeta]dx\,dt.
\end{split}
\]

\end{lemma}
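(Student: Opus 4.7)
My plan is to apply Lemma~\ref{lem2}, i.e.~equation~\eqref{e5}, directly with the vector-valued test function $\zeta=\n u_\eps\,\Phi(|\n u_\eps|)\,\xi$ on the time interval $(t_1',\tau)$ for $\tau\in(t_1',t_2')$, and then reinterpret the time-boundary value and the $\lan\n u_\eps,\part_t\zeta\ran$ contribution as the $\mathcal{G}$-energy. By Proposition~\ref{appr-reg} the function $u_\eps$ has enough regularity (via $|\n u_\eps|^q\n u_\eps\in L^2_{loc}((t_1,t_2);W^{1,2}_{loc}(B))$) that $\zeta\in H^1_c(Q\to\R^N)$ is admissible in~\eqref{e5} after the Steklov averaging already used in the proof of Lemma~\ref{lem2}, and $\zeta$ vanishes on $\part B'\times[t_1',t_2']$ since $\xi$ does. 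The hypothesis $\xi|_{\mathcal{P}Q'}=0$ makes $\xi(t_1')=0$, killing the lower boundary term, and we extend $\xi$ by zero to $[t_1,t_1')$ so that writing $\int_{t_1}^\tau$ is consistent.

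The core of the argument is a chain-rule identity. Set $s:=|\n u_\eps|$ and $H(s):=s^2\Phi(s)$, so $H'(s)=2s\Phi(s)+s^2\Phi'(s)$ while $\mathcal{G}'(s)=s\Phi(s)$. A direct computation gives
\[
\lan\n u_\eps,\part_t[\n u_\eps\Phi(s)]\ran
=s\Phi(s)\part_ts+s^2\Phi'(s)\part_ts
=\part_tH(s)-\part_t\mathcal{G}(s),
\]
and therefore
\[
\lan\n u_\eps,\part_t\zeta\ran=\part_t\bigl(H(s)\xi\bigr)-\part_t\mathcal{G}(s)\,\xi.
\]
Inserting this into~\eqref{e5}, the boundary value $\int_{B'}\lan\n u_\eps(\tau),\zeta(\tau)\ran dx=\int_{B'}H(|\n u_\eps(\tau)|)\xi(\tau)dx$ telescopes with the primitive of $\part_t(H(s)\xi)$, while an integration by parts in time on the residual $\part_t\mathcal{G}(s)\,\xi$ term produces exactly
\[
\int_{B'}\mathcal{G}(|\n u_\eps(\tau)|)\xi(\tau)dx-\int_{t_1}^\tau\!\int_{B'}\mathcal{G}(|\n u_\eps|)\part_t\xi\,dxdt.
\]
The $\tr\{(D\zeta)(\part_z\A_\eps)D^2u_\eps\}$ term transfers unchanged to the left side of the claim, and the three terms on the right of~\eqref{e5} involving $\part_x\A_\eps$, $\part_u\A_\eps$ and $b_\eps$ become verbatim the right-hand side of the claim, yielding the asserted inequality (in fact an equality).

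The principal technical obstacle is justifying the chain rule rigorously, because $\Phi$ is only Lipschitz and $\n u_\eps$ has only $L^2_{loc}(W^{1,2}_{loc})$-regularity in space and a dual-space time derivative. One approximates $\Phi$ by smooth $\Phi_n\to\Phi$ with $\Phi_n'\to\Phi'$ bounded and a.e., so that $\mathcal{G}_n\to\mathcal{G}$ locally uniformly. The bulk space--time integrals pass to the limit by dominated convergence, exploiting the growth bounds \eqref{e1.2b} and \eqref{e1}--\eqref{e3a} together with the $W^{2,2}_{loc}$-regularity of $u_\eps$ from Proposition~\ref{appr-reg}, whereas the pointwise-in-$\tau$ boundary integral $\int_{B'}\mathcal{G}(|\n u_\eps(\tau)|)\xi(\tau)dx$ is passed to the limit by Fatou's lemma at the fixed representative time $\tau$ -- this is precisely the source of the one-sided~$\leq$ in the statement. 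Finally, the restriction to a set of $\tau$ of full measure accommodates the usual ambiguity in representatives of $\tau\mapsto\n u_\eps(\tau)$.
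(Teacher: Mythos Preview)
Your formal chain-rule computation is correct and is algebraically equivalent to the paper's: both arrive at the identity that the time-boundary terms in~\eqref{e5} collapse to $\int_{B'}\mathcal{G}(|\n u_\eps(\tau)|)\xi(\tau)\,dx - \int_{t_1}^\tau\int_{B'}\mathcal{G}(|\n u_\eps|)\partial_t\xi\,dx\,dt$. However, your rigorous justification has a gap. You correctly observe that $\n u_\eps$ has only a dual-space time derivative, but your proposed remedy---smoothing $\Phi$ to $\Phi_n$---does not address this: even for smooth $\Phi_n$, the vector field $\zeta_n=\n u_\eps\,\Phi_n(|\n u_\eps|)\,\xi$ still fails to lie in $H^1_c(Q\to\R^N)$ in the time variable, because $\partial_t\n u_\eps$ is not an $L^2$ function. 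Hence $\zeta_n$ is not an admissible test function in~\eqref{e5}, and the pointwise chain-rule identity $\partial_t H(s)-\partial_t\mathcal{G}(s)$ cannot be invoked. Your remark that Steklov averaging was ``already used in the proof of Lemma~\ref{lem2}'' is also inaccurate: that proof proceeds by testing with $-\div\zeta$ for $\zeta\in C_c^2$ and then approximating, with no time mollification.

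The paper handles this differently: it regularises \emph{in time} via the Steklov average $u_h=T_hu_\eps$, sets $\zeta_h=\n u_h\,\Phi(|\n u_h|)\,\xi$, and uses that $\n u_h$ is genuinely $C^1$ in $t$ so that $\zeta_h\in H^1_c$ and the chain rule $\langle\partial_t\n u_h,\n u_h\rangle\Phi(|\n u_h|)=\partial_t\mathcal{G}(|\n u_h|)$ is rigorous. One then passes to the limit $h\to0$: the space-time integrals converge since $\n u_h\to\n u_\eps$ a.e., while for the pointwise-in-$\tau$ energy $\int_{B'}\mathcal{G}(|\n u_h(\tau)|)\xi(\tau)\,dx$ one only has Fatou's lemma, producing the inequality. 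So the source of the one-sided $\leq$ is the $h\to0$ limit, not a $\Phi_n\to\Phi$ limit. No approximation of $\Phi$ is needed: since $\mathcal{G}'(s)=s\Phi(s)$ and $|\n u_h|$ is Lipschitz in $t$, the identity $\tfrac12(\partial_t|\n u_h|^2)\Phi(|\n u_h|)=\partial_t\mathcal{G}(|\n u_h|)$ holds a.e.\ already for Lipschitz $\Phi$.
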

\begin{proof}
As before, we write $u_h=T_hu$, where $T_h$ is the Steklov
averaging with $h<\min\{t_2-t_2',t_1'-t_1\}$. With notation
above set $\zeta_h=\n u_h \Phi(|\n u_h|)\xi$. We apply
$T_h\zeta$ as the test vector function in \eqref{e5}:
\[
\begin{split}
\int_{B_R}\lan \n u_h(\tau),\zeta_h(\tau)\ran dx
& -\int\limits_{t_1}^\tau\int\limits_{B_R}
\lan \n u_h, \part_t\zeta_h\ran dx\,dt +
\int\limits_{t_1}^\tau\int\limits_{B_R} \TR{(D\zeta_h)T_h[(\part_z\A)D^2u] } dx\,dt\\
= & - \int\limits_{t_1}^\tau\int\limits_{B_R}
[\TR{(D\zeta_h)T_h\part_x\A}+\lan (DT_h\zeta_h)(\part_u\A),\n u\ran + T_h[b]\, {\rm div}\zeta_h]dx\,dt.
\end{split}
\]
Now we pass to the limit as $h\to0$.
For the first two terms in the left hand side we have
\[
\begin{split}
\int_{B_R}\lan \n u_h(\tau),\zeta_h(\tau)\ran dx - \int\limits_{t_1}^\tau\int\limits_{B_R}\lan \n u_h, \part_t\zeta_h\ran dx\,dt
= \int\limits_{t_1}^\tau\int\limits_{B_R} \lan \part_t\n u_h,\n u_h\ran \Phi(|\n u_h|) \xi dx\,dt\\
=\frac12\int\limits_{t_1}^\tau\int\limits_{B_R}  (\part_t|\n u_h|^2) \Phi(|\n u_h|) \xi dx\,dt
=\int\limits_{t_1}^\tau\int\limits_{B_R}\left(\part_t\mathcal{G}(|\n u_h|)\right)\xi dx\,dt\\
=\int\limits_{B_R} \mathcal{G}(|\n u_h(\tau)|)\xi(\tau) dx
- \int\limits_{t_1}^\tau\int\limits_{B_R} \mathcal{G}(|\n u_h|)\partial_t\xi dx\,dt.
\end{split}
\]
Since $\n u_h \to \n u$ a.e. as $h\to 0$, we obtain
\[
\begin{split}
\lim_{h\to 0}\int\limits_{t_1}^\tau\int\limits_{B_R} \mathcal{G}(|\n u_h|)\partial_t\xi dx\,dt =
\int\limits_{t_1}^\tau\int\limits_{B_R} \mathcal{G}(|\n u|)\partial_t\xi dx\,dt,
 \\
\liminf_{h\to 0} \int\limits_{B_R} \mathcal{G}(|\n u_h|(\tau))\xi(\tau)dx\ge \int\limits_{B_R} \mathcal{G}(|\n u|(\tau))\xi(\tau)dx,
\end{split}
\]
and the assertion follows.
\end{proof}


\begin{lemma}
\label{step1} Let $Q'\Subset Q''\Subset Q$, \ $u_\eps$ and $M$
be as in Proposition~\ref{apriori1}. For $l>0$ and $\a\ge0$,
let
\[
\Phi_\a(s)=(s-l)_+^{2+2\a}s^{-2},\ \a\ge 0,  \quad \Ge_\a(s)=\int_0^s r\Phi_\a(r) dr.
\]
Assume that 
$(f^2+g^2+f_1^2+g_1^2+f_2^2+g_2^2)\ind_{B_R}\in PK$. Then, for
every $\a\ge 0$ and $l>0$ there exist $\beta$ and $\g$
independent of $\eps$ such that
\begin{eqnarray}
\nonumber
\esssup_t \int_{B'} \Ge_\a(|\n u_\eps(t)|)dx + \iint_{Q'} |\n u_\eps|^{p-2} |D^2 u_\eps|^2 \Phi_\a(|\n u_\eps|)dx\,dt\\
\nonumber
+\iint _{Q''} |\n u_\eps|^{p-1} |\n |\n u_\eps||^2 \Phi'_\a(\n u_\eps)dx\,dt\\
\label{e-PK-lem}
\le \g \iint_{Q''} (f^2+g^2+f_1^2+g_1^2 +f_2^2+g_2^2 ) dx\,dt
+\g \iint_{Q''} |\n u_\eps|^p \Phi_\a(|\n u_\eps|) dx\,dt
\end{eqnarray}
provided $(g^2+g_1^2+g_2^2)\ind_{B_R}\in PK_\beta$.
\end{lemma}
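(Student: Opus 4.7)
The plan is to apply Lemma~\ref{lem3} with $\Phi=\Phi_\a$ and $\zeta=\n u_\eps\,\Phi_\a(|\n u_\eps|)\,\xi$, taking $\xi=\eta^2(x)\chi(t)$ as a product cutoff supported in $Q''$ and equal to $1$ on $Q'$. Suppressing the subscript $\eps$ and using $\n|\n u|=(D^2u)\,\n u/|\n u|$, one decomposes
\[
D\zeta=\Phi\,\xi\,D^2u+\tfrac{\Phi'_\a\,\xi}{|\n u|}\,\n u\otimes(D^2u\,\n u)+\Phi\,\n u\otimes\n\xi.
\]
For the principal elliptic term $\TR{(D\zeta)(\part_z\A_\eps)D^2u}$, Remark~\ref{rem1} applied to the first summand produces $c_0\Phi\xi|\n u|^{p-2}|D^2u|^2$. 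The second summand is treated via the identity $\TR{(v\otimes w)(\part_z\A)D^2u}=\lan(\part_z\A)w,w\ran$ with $w=D^2u\,\n u=|\n u|\n|\n u|$, combined with \eqref{e1}, producing $c_0\Phi'_\a\xi|\n u|^{p-1}|\n|\n u||^2$. The third (cutoff) summand is absorbed via Young and \eqref{e2} at the cost of a contribution bounded by $C|\n u|^p\Phi_\a|\n\eta|^2\chi$, which fits into the $\iint|\n u|^p\Phi_\a\,dxdt$ term on the right-hand side of \eqref{e-PK-lem}.

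Next I would estimate the three lower-order terms on the right of the identity. Combining \eqref{e2}--\eqref{e3a}, the structure bound on $|b|$, and $|u_\eps|\le M$, they are dominated collectively by $|D\zeta|\cdot\bigl[(g+g_1+g_2)|\n u|^{p-1}+f_1|\n u|+f_2+c_Mf\bigr]$, with $|D\zeta|\le\Phi\xi|D^2u|+\Phi'_\a\xi|\n u||\n|\n u||+\Phi|\n u||\n\xi|$. Young's inequality applied to each of the nine pairings splits off $\eps$-fractions of the two principal LHS quantities (to be absorbed) plus integrals of the form $\iint F^2\,W(|\n u|)\,\xi\,dxdt$, where $F\in\{f,f_1,f_2,g,g_1,g_2\}$ and $W$ is a polynomial in $|\n u|$---of degree at most $p+2\a$ for the $g$-type pairings, strictly smaller for the $f$-type ones. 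Splitting each such integral according to $\{|\n u|\le 2l\}$ versus $\{|\n u|\ge 2l\}$: on the first region $W(|\n u|)$ is bounded, so the contribution is at most $C\iint F^2\,dxdt$, a RHS term; on the second region one uses the equivalence $\Phi_\a(|\n u|)\asymp|\n u|^{2\a}$ and invokes the $PK$ condition on $F^2\ind_{B_R}$ with test function $\theta=|\n u|^{\a+p/2}\eta\chi^{1/2}$, yielding $\int F^2\theta^2\,dx\le\b\int|\n\theta|^2\,dx+C\int\theta^2\,dx$ at each fixed $t$.

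A direct calculation bounds $|\n\theta|^2$ by $c(\a+p/2)^2|\n u|^{p+2\a-2}|\n|\n u||^2\eta^2\chi+c|\n u|^{p+2\a}|\n\eta|^2\chi$. Its first piece is controlled by the LHS term $c_0\iint\Phi\xi|\n u|^{p-2}|D^2u|^2\,dxdt$ through $|\n|\n u||\le|D^2u|$ and the inequality $\Phi_\a(|\n u|)|\n u|^{p-2}\ge c|\n u|^{p+2\a-2}$ on $\{|\n u|\ge 2l\}$. For the $g$-type coefficients this absorption is valid precisely when $\b$ is small enough compared to $c_0/(\a+p/2)^2$, explaining the hypothesis $(g^2+g_1^2+g_2^2)\ind_{B_R}\in PK_\b$ with $\b=\b(\a,p,c_0)$; for $f$-type coefficients no smallness is needed since the weight $W$ has strictly smaller degree and the same absorption applies with a better constant. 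The $\int\theta^2$-contribution, together with the cutoff-related pieces, fits into $\iint(|\n u|^p\Phi_\a+F^2)\,dxdt$. Taking $\esssup_\tau$ on the left then completes \eqref{e-PK-lem}. The main obstacle is the book-keeping just described---matching the structure and constants of the gradient term produced by the $PK_\b$-bound with the principal positive LHS term---together with the region splitting around $\{|\n u|\sim l\}$ needed to handle the case $\a=0$, where $\Phi'_\a$ degenerates at $s=l$ and the dominant absorbing term comes from $c_0\Phi\xi|\n u|^{p-2}|D^2u|^2$ rather than from the $|\n|\n u||^2$-term.
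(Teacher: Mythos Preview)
Your overall strategy matches the paper's: apply Lemma~\ref{lem3} with $\zeta=\n u\,\Phi_\a(|\n u|)\xi$, use Remark~\ref{rem1} and \eqref{e1}--\eqref{e2} to extract the two principal positive terms, apply Young's inequality to the lower-order pairings, and close via the $PK$ condition. The structure-coefficient book-keeping you describe (the distinction between $g$-type and $f$-type weights, the former forcing smallness of $\beta$) is also the paper's mechanism.

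There is, however, a genuine gap in your choice of test function for the $PK$ step. You take $\theta=|\n u|^{\a+p/2}\eta\chi^{1/2}$ and then claim that $\beta\int|\n\theta|^2$ can be absorbed into the left-hand side term $c_0\iint\Phi_\a\,|\n u|^{p-2}|D^2u|^2\xi$ using $\Phi_\a(|\n u|)|\n u|^{p-2}\ge c\,|\n u|^{p+2\a-2}$ on $\{|\n u|\ge 2l\}$. But the $PK$ inequality produces $\int_{B''}|\n\theta|^2$ over the \emph{full} slice, not only over $\{|\n u|\ge 2l\}$. On $\{|\n u|\le l\}$ the absorbing term vanishes (since $\Phi_\a=0$ there), while $|\n\theta|^2$ contributes $c\,|\n u|^{p+2\a-2}|\n|\n u||^2\eta^2\chi$, leaving a term of the type $c_l\iint_{\{|\n u|<2l\}}|D^2u_\eps|^2$ that is finite for each $\eps$ (by Proposition~\ref{appr-reg}) but not bounded uniformly in $\eps$. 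Your region-splitting handles the integrand $F^2W(|\n u|)$ but does not localize the gradient of the test function.

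The fix is to build the truncation at level $l$ into $\theta$ itself. The paper uses
\[
\theta=\sqrt{\Phi_\a(|\n u|)}\,(|\n u|-l)_+^{p/2}\,\xi=(|\n u|-l)_+^{1+\a+p/2}\,|\n u|^{-1}\,\xi,
\]
which vanishes on $\{|\n u|\le l\}$ and satisfies
\[
\big|\n\theta\big|^2\le c\,\Phi_\a(|\n u|)\,(|\n u|-l)_+^{p-2}\,\big|\n|\n u|\big|^2\,\xi^2
\le c\,\Phi_\a(|\n u|)\,|\n u|^{p-2}\,|D^2u|^2\,\xi^2,
\]
since $(|\n u|-l)_+\le|\n u|$ and $|\n|\n u||\le|D^2u|$. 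This matches the principal left-hand side term exactly, and the absorption goes through for $\beta$ small depending on $\a,p,c_0$ and the $PK$-constant of $f^2+f_1^2+f_2^2$. With this single change your argument is complete.
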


\begin{proof}
Since $|u|\le M$ on $Q''$, it follows that $|b(u, \n u)|\le
g|\n u|^{p-1} + \gamma f$ on $Q''$. 

In the rest of the proof we omit the subscript $\a$ in
$\Phi_\a$ and $\Ge_\a$. Let $\xi$ be the standard cut-off
function vanishing on the parabolic boundary of $Q''$, which is
equal to 1 on $Q'$.

By Lemma~\ref{lem3} with $\zeta= \Phi(|\n u|)\xi^2\n u$ as a
test function we have
\begin{equation}
\label{e1a}
\begin{split}
&\sup\limits_t\int\limits_{B''} \Ge(|\n u|(t) \xi^2dx +\iint\limits_{Q''} \TR{(D\zeta)(\part_z\A)D^2u }dx\,dt\\
&\le 2 \iint\limits_{Q''}  \Ge(|\n u|)\xi\part_t\xi dx\,dt +
\iint\limits_{Q''} (|\part_x\A|+|\part_u \A||\n u| +|b|)|D\zeta|dx\,dt.
\end{split}
\end{equation}
Note that
\[
D\zeta = \Phi(|\n u|)\xi^2 D^2u + \Phi'(|\n u|)\xi^2 \n u \otimes \n|\n u| + 2\Phi(|\n u|)\xi\n u \otimes \n\xi.
\]
Now we estimate the left hand side of \eqref{e1a} from below using \eqref{e1}--\eqref{e2}, Remark~\ref{rem1}, the identities $D^2u\n u =|\n u|\n|\n u|$ and
$s\Phi'(s) = 2\a\Phi(s) + (2+2\a)l(s-l)^{1+2\a}s^{-2}$:
\[
\begin{split}
\TR{(D\zeta)(\part_z\A)D^2u } \ge & c_0\Phi(|\n u|)|\n u|^{p-2}\big|D^2u\big|^2\xi^2
+ c_0\Phi'(|\n u|)|\n u|^{p-1}\big|\n|\n u|\big|^2\xi^2
\\
& - 2(1+l^{2-p})c_1\Phi(|\n u|)|\n u|^{p-1}\big|\n|\n u|\big|\xi|\n\xi|
\\
\ge & c_0\Phi(|\n u|)|\n u|^{p-2}\big|D^2u\big|^2\xi^2 + \tfrac12c_0\Phi'(|\n u|)|\n u|^{p-1}\big|\n|\n u|\big|^2\xi^2
\\
&- c_{l,p,\a}\Phi(|\n u|)|\n u|^{p}|\n\xi|^2.
\end{split}
\]

The first term on the right hand side of \eqref{e1a} is
estimated using the elementary inequality $\Ge(s)\le c_\a
l^{2-p} \Phi(s)$. The second term is estimated by the Schwartz
inequality using the $PK_0$ condition. To shorten the
exposition, we denote $F_\eps:=f+f_{1,\eps}+f_{2,\eps}$,
$G_\eps:=g + g_{1,\eps}+g_{2,\eps}$. Observe that $F_\eps^2$
and $G_\eps^2$ belong to $PK$ with the same constants as $F^2$
and $G^2$, respectively.

It follows from \eqref{e1.2b} and \eqref{e3}--\eqref{e3a} that
\[|\part_x\A|+|\part_u \A||\n u| +|b| \le |\n u|^{p-1}G_\eps +
|\n u|f_{1,\eps} + f+f_2.\] To estimate the right hand side of
\eqref{e1a} we use
the Schwartz inequality and the estimate $\Phi(s)s^p +
\Phi'(s)s^{p+1}\le c_l\big(\Phi(s)(s-l)_+^p +1\big)$ in order
to conclude that,
for all $\d>0$,
\[
\begin{split}
\Phi(|\n u|)\xi^2 \big|D^2u\big|\,|\n u|^{p-1}G_\eps\le & \d\Phi(|\n u|)|\n u|^{p-2}\big|D^2u\big|^2\xi^2
+ \tfrac{c_l}{\d}\big(\Phi(|\n u|)(|\n u|-l)_+^p +1\big) \xi^2G_\eps^2;
\\
\Phi'(|\n u|)\xi^2 |\n u|^p\big| \n|\n u|\big|G_\eps \le & \d\Phi'(|\n u|)|\n u|^{p-1}\big|\n|\n u|\big|^2\xi^2
+  \tfrac{c_l}{\d}\big(\Phi(|\n u|)(|\n u|-l)_+^p +1\big) \xi^2G_\eps^2;
\\
\Phi(|\n u|)\xi |\n u|^p|\n\xi|G_\eps\le & \tfrac12\big(\Phi(|\n u|)(|\n u|-l)_+^p +1\big) \xi^2G_\eps^2
+ \tfrac12\Phi(|\n u|)|\n u|^{p}|\n\xi|^2.
\end{split}
\]
Similarly, since $p>2$, for every $\sigma>0$ there exists
$c_{l,\alpha,\sigma}>0$ such that
\[
(\Phi(s)s^{2-p} + \Phi'(s)s^{3-p})(1+s^2)\le \sigma \Phi(s)(s-l)_+^p + c_{l,\alpha,\sigma}.
\]
Hence we conclude that, for all $\d>0$, there exist
$c_{l,\alpha,\delta}>0$ such that
\[
\begin{split}
&\Phi(|\n u|)\xi^2 \big|D^2u\big|(|\n u|f_{1,\eps}+f+f_{2,\eps})
\\
&\le   \d\Phi(|\n u|)|\n u|^{p-2}\big|D^2u\big|^2\xi^2
+ \tfrac{c}{\d}\Phi(|\n u|)|\n u|^{2-p}(|\n u|f_{1,\eps} + f + f_{2,\eps})^2 \xi^2
\\
&\le  \d\Phi(|\n u|)|\n u|^{p-2}\big|D^2u\big|^2\xi^2 + \d\Phi(|\n u|)(|\n u|-l)_+^p\xi^2F_\eps^2 +
c_{l,\alpha,\delta}\xi^2F_\eps^2;
\\
&\Phi'(|\n u|)\xi^2 |\n u|\big| \n|\n u|\big| (|\n u|f_{1,\eps}+f+f_{2,\eps})
\\
&\le
\d\Phi'(|\n u|)|\n u|^{p-1}\big|\n|\n u|\big|^2\xi^2
+  \tfrac{c}{\d}\Phi'(|\n u|)|\n u|^{3-p}(|\n u|f_{1,\eps} + f + f_{2,\eps})^2 \xi^2
\\
&\le  \d\Phi'(|\n u|)|\n u|^{p-1}\big|\n|\n u|\big|^2\xi^2 + \d\Phi(|\n u|)(|\n u|-l)_+^p\xi^2F_\eps^2 +
c_{l,\alpha,\delta}\xi^2F_\eps^2;
\\
&\Phi(|\n u|)\xi |\n u||\n\xi|(|\n u|f_{1,\eps}+f+f_{2,\eps})
\\
&\le
\tfrac12\Phi(|\n u|)|\n u|^{p}|\n\xi|^2
+
\tfrac12\Phi(|\n u|)|\n u|^{2-p}(|\n u|f_{1,\eps} + f + f_{2,\eps})^2\xi^2
\\
&\le  \tfrac12\Phi(|\n u|)|\n u|^{p}|\n\xi|^2 + \d\Phi(|\n u|)(|\n u|-l)_+^p\xi^2F_\eps^2 +
c_{l,\alpha,\delta}\xi^2F_\eps^2.
\end{split}
\]

 To complete the proof it remains to estimate the
term $\iint_{Q''}\Phi(|\n u|)(|\n u|-l)_+^p \xi^2
(F_\eps^2+G_\eps^2)dx\,dt$,
which is done by the direct use of the $PK$ condition noting
the inequality $\big|\n \big(\sqrt{\Phi(|\n u|)}(|\n u
|-l)_+^{\frac p2}\big)\big|^2 \le c \Phi(|\n u|)(|\n u
|-l)_+^{p-2} |\n |\n u||^2$. We omit further details.
\end{proof}


\begin{proof}[Proof of Proposition~\ref{apriori1}]
To prove the proposition it suffices to show that, for $\a>0$ and a cylinder $Q_1$ such that $Q'\Subset Q_1\Subset Q''$,
\begin{equation}\label{intermed}
    \iint_{Q_1} |\n u|^{p+2\a}dx\,dt\le \g_\a\left(\iint_{Q''} \left(|\n u|^{p} + F^2 +1\right)dx\,dt\right)^{\a+1}
+ \g_\a\left(\iint_{Q''} \left(F^2 +1\right)dx\,dt\right)^{N/{(N+2)}}.
\end{equation}
Then the assertion follows from Lemmma~\ref{step1}.

The proof of \eqref{intermed}
follows the line of the argument from \cite[Ch.VIII, Lemma~4.1]{DiB}.
We will iterate with respect to $\a$ as it is done in
\cite[p.232--233]{DiB} (with $\b$ in place of our $2\a$).
Let $Q^\dag=(t_1^\dag,t_2^\dag)\times B^\dag$,
$Q^\ddag=(t_1^\ddag,t_2^\ddag)\times B^\ddag$ be such that
$Q^\dag\Subset Q^\ddag\Subset Q$.
Fix $\a>0$. Let
$\Phi_\a$ and $\Ge_\a$ be as in Lemma~\ref{step1} with $l=1$,
$\Psi_\a(s)=\int_1^s r^{p/2-1}\sqrt{\Phi_\a(r)}dr$ with . Note
that $\Psi_\a(s)\le s^{p/2+\a}$ and
$\big|\Psi_\a'(s)\big|^2=s^{p-2}\Phi_\a(s)$. Using the
definitions of $\Phi_\a$, $\Psi_\a$ and $\Ge_\a$ and the
Sobolev inequality we obtain
\begin{eqnarray*}
\iint_{Q^\dag} |\n u|^{p+\frac 4N
+2\a\left(1+\frac 2N\right)}dx\,dt\le
2^{p+\frac 4N +2\a\left(1+\frac 2N\right)}|Q^\dag|
+ \iint_{Q^\dag\cap\{|\n u|>1\}}\Psi_\a^2(|\n u|)\Ge_\a^{2/N}(|\n u|)dx\,dt\\
\le \g|Q^\dag|+\g \iint_{Q^\dag}\left(|\n\Psi_a(|\n u|)|^2 + \Psi_\a^2(|\n u|)\right)dx\,dt\left(\sup_t \int_{B^\dag} \Ge_\a(|\n u|)\xi^2 dx\right)^{2/N}.
\end{eqnarray*}
By Lemma~\ref{step1} we estimate the right hand side of the
above inequality, which gives
\begin{equation}\label{step2}
    \iint_{Q^\dag} |\n u|^{p+\frac 4N +2\a\left(1+\frac 2N\right)}dx\,dt\le
\g |Q^\dag|+
\g \left(\iint_{Q^\ddag} (F^2+ |\n u|^{p+2\a})dx\,dt\right)^{1+2/N}.
\end{equation}
Consider the exhaustion of $Q''$ by cylinders $Q_0=Q'\Subset Q_1 \Subset Q_2\Subset \dots \Subset Q_n\Subset \dots \Subset Q''$.
By iterating \eqref{step2} with $\vark_n:=(1+\tfrac2N)^{n-1}$,  we obtain
\begin{equation}\label{special}
    \iint_{Q_1} |\n u|^{p+2\vark_n-2}dx\,dt\le
    \g_n\left(\iint_{Q_n} \left(|\n u|^{p} + F^2 +1\right)dx\,dt\right)^{\vark_n}
+ \g_n\iint_{Q_n} \left(F^2 +1\right)dx\,dt .
\end{equation}
This proves~\eqref{intermed} for $\a=\vark_n-1$, $n\in \mathbb{N}$. For a general $\a>0$ fix $n$ such that $\vark_n>\a+1>\vark_{n-1}$.
Then there exists $s\in(0,1)$ such that $p+2\a = s(p+2\vark_n-2) + (1-s)p$. Then
\[
\iint_{Q_1} |\n u|^{p+2\a}dx\,dt \le \left(\iint_{Q_1} |\n u|^{p+2\vark_n-2}dx\,dt\right)^s
\left(\iint_{Q_1} |\n u|^{p}dx\,dt\right)^{1-s}.
\]
Now~\eqref{intermed} follows from~\eqref{special} and the Young inequality.

\end{proof}

The following a priori estimate, mainly extracted from
\cite[Theorem 1.1]{LS2}, is a ground for the assumption in
Proposition~\ref{apriori1} that $u_\eps$ is locally bounded
uniformly in $\eps$.

\begin{proposition}\label{max_u}
Let $u_\eps\in V(Q)$ be a solution to \eqref{e-approx} in $Q$.
Then, for every $Q'=B'\times(t_1',t_2')\Subset
Q''=B''\times(t_1'',t_2'')\Subset Q$ and
$\rho<\frac14\min[1,dist(B',\part B),\sqrt{t_1'-t_1''},
\sqrt{t_2''-t_2'}]$, there exists $\g_\rho>0$ independent of
$\eps$ such that
\[
\sup\limits_{Q'}|u_\eps|\le \g_\rho\left(\iint_{Q''}|u_\eps|^{p+\frac1{Np'}}dx\,dt\right)^{\frac{pN}{2pN+p-1}}
+ \g_\rho\sup\limits_{t\in(t_1'',t_2'')}\left(\int_{B''}|u_\eps|^2dx\right)^{\frac12}
+ \g_\rho\sup\limits_{x\in B''}W^{g^p+f}(x,2\rho) + \g_\rho.
\]
\end{proposition}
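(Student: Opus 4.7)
The plan is to reduce this statement to \cite[Theorem~1.1]{LS2} applied to the regularized equation \eqref{e-approx}. The key observation is that $(\A_\eps,b_\eps)$ satisfies the structure conditions \eqref{e1.2b} with exactly the same constants $c_0, c_1$ and the same majorants $f, g$ as the original $(\A,b)$, so every estimate produced by the argument of \cite{LS2} applies to $u_\eps$ with constants independent of $\eps$. The argument itself is a De~Giorgi--Moser iteration on level sets, which I outline below.

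First I would test \eqref{e-approx} (in its Steklov-averaged form) against $\psi = (u_\eps - k)_+^{q-1}\xi^p$ for a level $k>0$, an exponent $q\geq p$, and a cutoff $\xi \in C_c^\infty$ supported between an intermediate cylinder and $Q''$. Using \eqref{e1.2b} and Young's inequality one derives the energy inequality
\[
\esssup_{\tau}\int(u_\eps-k)_+^q\xi^p dx + \iint \bigl|\n (u_\eps-k)_+^{q/p}\bigr|^p\xi^p\,dxd\tau \le \gamma\iint(u_\eps-k)_+^q[\xi^p+\xi^{p-1}|\partial_t\xi|]\,dxd\tau + \gamma\iint (g^p+f)\bigl[(u_\eps-k)_+^q+1\bigr]\xi^p\,dxd\tau.
\]

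Second, I would handle the last integral slice-wise via Corollary~\ref{lem-hardy-Kp}, which yields
\[
\int_{B''}(g^p+f)(u_\eps-k)_+^q\xi^p dx \le \gamma\sup_{x\in B''} W^{g^p+f}_p(x,2\rho)^{p-1}\int_{B''}\bigl|\n[(u_\eps-k)_+^{q/p}\xi]\bigr|^p dx.
\]
Since smallness of the Wolff potential is not assumed here, this term cannot be absorbed outright. The remedy is to shift the truncation level: taking $k \ge k_0 := c\sup_{x\in B''} W^{g^p+f}_p(x,2\rho)$ reduces the effective mass of $g^p+f$ below the threshold needed for absorption into the gradient term on the left, and this is precisely what produces the additive summand $\gamma_\rho\sup_x W^{g^p+f}_p(x,2\rho)$ in the final bound.

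Third, combining the resulting reverse-H\"older-type inequality with the parabolic Sobolev embedding and iterating over a geometric sequence of shrinking cylinders between $Q'$ and $Q''$ with $q=p\chi^n$, $\chi=1+\tfrac2N$, yields the $L^\infty$ estimate for $(u_\eps - k_0)_+$. The exponents $r = p + \tfrac1{Np'}$ and $\theta = \tfrac{pN}{2pN+p-1}$ emerge from the standard parabolic interpolation between $\esssup_t L^2$ and the space-time $L^{p\chi}$ norm controlled by the gradient energy. Applying the same argument to $-u_\eps$ handles the lower bound. The uniformity in $\eps$ is preserved throughout since the structure conditions are. The main subtle point---and the only real departure from a routine local boundedness proof---is the level shift in Step 2, which is dictated by the absence of a smallness hypothesis on $\sup W^{g^p+f}_p$ and is the origin of the additive Wolff-potential term in the statement.
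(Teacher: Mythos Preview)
Your reduction to \cite[Theorem~1.1]{LS2} is exactly what the paper does: its entire proof of this proposition is a two-line citation of \cite{LS2}, relying on the observation (which you correctly make) that $(\A_\eps,b_\eps)$ satisfies the structure conditions \eqref{e1.2b} with the \emph{same} constants and the \emph{same} majorants $f,g$ as $(\A,b)$, so the estimate \cite[(3.18)]{LS2} applies uniformly in $\eps$.

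However, the sketch you give of the argument behind \cite{LS2} contains a gap at Step~2. Shifting the truncation level to $k\ge k_0 = c\sup_x W_p^{g^p+f}(x,2\rho)$ does \emph{not} reduce the constant appearing in Corollary~\ref{lem-hardy-Kp}: that constant is $\gamma\sup_x W_p^{g^p+f}(x,2\rho)^{p-1}$, which is a fixed number determined by $f$, $g$ and $\rho$ and is completely insensitive to $k$. A single level shift therefore cannot create the absorption you claim. The mechanism in \cite{LS2} that actually produces the additive Wolff-potential term is a Kilpel\"ainen--Mal\'y iteration over a geometric sequence of \emph{scales} $r_j=2^{-j}\rho$, with the truncation level $l_j$ adjusted at each scale so that a normalized energy stays below a fixed threshold; the total level shift $\sum_j(l_{j+1}-l_j)$ is then controlled by $\sum_j\big(r_j^{p-N}\int_{B_{r_j}}(g^p+f)\big)^{1/(p-1)}\asymp W_p^{g^p+f}(x,2\rho)$. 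This is precisely the scheme the present paper reproduces in Section~4 for the gradient (see Lemma~\ref{lem3.1} and the summation following it), and is the correct template for what happens in \cite{LS2}.
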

\begin{proof}
The fact that $u\in L^\infty_{loc}(Q)$ is established in
\cite[Theorem 1.1]{LS2}. The actual estimate follows from
\cite[(3.18)]{LS2}.
\end{proof}

The next proposition establishes the existence of a solution to a
initial-boundary value problem for~\eqref{e-approx}.

\begin{proposition}
\label{approx}
 Let $v\in V(\O_T)\cap
L^{p'}_{loc}\big((0,T);\;W^{-1,p'}_{loc}(\O)\big)$. Then there
exists a solution \\ $u_\eps\in
L^p\big((t_1,t_2);W^{1,p}(B_R)\big)$ to \eqref{e-approx} on $Q$
subject to the condition $u_\eps|_{\P Q}=v$, where $\P Q$ is
the parabolic boundary of $Q$.
\end{proposition}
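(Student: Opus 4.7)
The plan is to reduce the problem to one with zero lateral/initial data and then apply a Galerkin scheme, exploiting the strict uniform monotonicity coming from the additional $\eps z$ term in $\A_\eps$ and the truncation of $b_\eps$. Writing $u_\eps = w + v$ on $Q$, the function $w$ should satisfy $w\big|_{\mathcal P Q}=0$ and
\[
w_t - \div \A_\eps(x,t,w+v,\n(w+v)) = b_\eps(x,t,w+v,\n(w+v)) - v_t \quad\text{in } Q.
\]
By the assumption $v\in V(\O_T)\cap L^{p'}_{loc}\big((0,T);W^{-1,p'}_{loc}(\O)\big)$, the right-hand side $b_\eps(\dots)-v_t$ lies in $L^{p'}\big((t_1,t_2); W^{-1,p'}(B_R)\big)$ once we control $w$ in $L^p(W_0^{1,p})$, so the reformulation puts the problem in the standard functional framework for monotone parabolic operators.

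The next step is to choose a countable Schauder basis $\{e_k\}\subset W_0^{1,p}(B_R)\cap L^2(B_R)$ (e.g.\ the Dirichlet eigenfunctions of $-\Delta$, which are smooth) and look for finite-dimensional approximations $w_n(t)=\sum_{k=1}^n c^n_k(t)e_k$ satisfying the system of ODEs obtained by testing the equation against each $e_k$. Local-in-time solvability of this ODE system follows from the Carath\'eodory theory because $\A_\eps$ is smooth in $x$ and $b_\eps$ is bounded by $\eps^{-1}$. Testing with $w_n$ itself and using the structure condition $\A_\eps(\cdot)z\ge c_0|z|^p+\eps|z|^2$, together with the boundedness of $b_\eps$ and Corollary~\ref{lem-hardy-Kp} (which allows the terms involving $f|u|^{p-1}$ and $g|\n u|^{p-1}$ to be absorbed by $\int|\n w_n|^p$, provided $\sup_{B_R}W_p^{g^p+f}(\cdot,2R)$ is small), produces the uniform energy estimate
\[
\esssup_t\int_{B_R}|w_n|^2\,dx + \iint_Q |\n w_n|^p\,dx\,dt \le \g,
\]
and a corresponding bound on $\part_t w_n$ in $L^{p'}\bigl((t_1,t_2);W^{-1,p'}(B_R)\bigr)$ (here we use also the argument of Proposition~\ref{Dt} applied to $\A_\eps$). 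This yields global existence of $w_n$ on $[t_1,t_2]$.

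By the Aubin--Lions compactness lemma one extracts a subsequence with $w_n\to w$ strongly in $L^p(Q)$ and a.e., $\n w_n\rightharpoonup \n w$ weakly in $L^p(Q)$, $\A_\eps(x,t,w_n+v,\n(w_n+v))\rightharpoonup \chi$ weakly in $L^{p'}(Q;\R^N)$, and $\part_t w_n\rightharpoonup \part_t w$ in $L^{p'}(W^{-1,p'})$. Identification $\chi=\A_\eps(x,t,w+v,\n(w+v))$ is then obtained by a Minty--Browder argument: by the strict monotonicity
\[
\lan \A_\eps(x,t,s,z_1)-\A_\eps(x,t,s,z_2),\,z_1-z_2\ran \ge \eps|z_1-z_2|^2,
\]
which is a direct consequence of \eqref{e1} and of the additional $\eps z$ term, together with the pointwise convergence of $w_n\to w$ (which handles the dependence of $\A_\eps$ on $s$ via the Carath\'eodory/continuity property), passing to the limit in the inequality $\iint\lan\A_\eps(\cdot,w_n+v,\n(w_n+v))-\A_\eps(\cdot,w_n+v,\n\zeta),\n(w_n+v)-\n\zeta\ran\,\ge 0$ for arbitrary $\zeta$ identifies the nonlinear term.

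The main obstacle I foresee is precisely this final identification of the weak limit of $\A_\eps(x,t,w_n+v,\n(w_n+v))$. The dependence of $\A_\eps$ on the $u$-variable means we cannot invoke classical monotonicity theorems verbatim; one must combine the a.e.\ convergence of $w_n+v$ (via Aubin--Lions) with the Minty trick for the $z$-variable, which requires care because the coefficients entering $\part_u\A_\eps$ (namely $g_{1,\eps}$ and $f_{1,\eps}$) are only in $PK^{loc}$. The smallness of $\sup_{B_R}W_p^{g^p+f}(\cdot,2R)$ is exactly what is needed here to absorb all the critical lower-order terms in the energy estimate, and the truncation in $b_\eps$ together with the mollification in $\A_\eps$ make every other step quantitative and classical.
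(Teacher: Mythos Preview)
Your approach is correct and is essentially the classical construction that the paper is invoking: in the paper the entire proof of this proposition is the single sentence ``The assertion follows from \cite{Li}'', i.e.\ a direct appeal to Lions' existence theory for monotone parabolic operators, whose content is precisely the Galerkin/energy-estimate/Minty--Browder argument you outline.

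Two minor remarks on your write-up. First, your worry about the ``main obstacle'' in the Minty step is unfounded here: since $f_{1,\eps}=f_1*j_\eps$ and $g_{1,\eps}=g_1*j_\eps$ are \emph{smooth and bounded} on $B_R$, the $u$-dependence of $\A_\eps$ is Lipschitz with bounded coefficients, and the a.e.\ convergence of $w_n+v$ together with the strict monotonicity from the $\eps z$ term make the identification of the limit entirely routine. Second, you do not need the smallness of $\sup_{B_R}W_p^{g^p+f}(\cdot,2R)$ or Corollary~\ref{lem-hardy-Kp} for the energy estimate in this proposition: because $|b_\eps|\le\eps^{-1}$, the right-hand side contributes only $\eps^{-1}\|w_n\|_{L^1(Q)}$, which is controlled by Poincar\'e and Young without any Kato-type hypothesis. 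The smallness assumption enters later (Proposition~\ref{funct_int_est}) to obtain bounds uniform in $\eps$, not for existence at fixed~$\eps$.
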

\begin{proof}
The assertion follows from \cite{Li}.
\end{proof}

The following proposition establishes first a posteriori
estimates for a solution to an initial-boundary value problem
for~\eqref{e-approx}.

\begin{proposition}\label{funct_int_est}
Let $u_\eps$ be a weak solution to \eqref{e-approx} in
$Q\Subset \O_T$. Assume that there exists $v\in V(\O_T)\cap
L^{p'}_{loc}\big((0,T);\;W^{-1,p'}_{loc}(\O)\big)$ such that
$u_\eps(x,t)=v(x,t)$ on $\P Q$. Then, provided
$\sup\limits_{x\in {B_R}}W_p^{g^p+f}(x,2R)$ is small enough,
the following estimates hold: there exists $\g$ independent of
$\eps$ such that
\[
\begin{split}
\sup_{\tau\in (t_1,t_2)}\int_{B_R}  u^2(\tau) dx + \iint_Q |\n
u_\eps|^pdx\,dt \le \g\sup_{\tau\in (t_1,t_2)}\int_{B_R}v^2(\tau)
dx + \g \iint_Q \big(|\n v\big|^p+f|v|^p+f\big)dx\,dt
\\
+\g\left\|\partial_tv\right\|_{L^{p'}\big((t_1,t_2);\;W^{-1,p'}(B_R)\big)},\\
\iint_Q|u|^{p+\frac{2p}N}dx\,dt \le
\g\iint_Q|v|^{p+\frac{2p}N}dx + \g\Big(\sup_{\tau\in (t_1,t_2)}\int_{B_R}v^2(\tau)
dx + \iint_Q \big(|\n v\big|^p+f|v|^p+f\big)dx\,dt
\\+
\left\|\partial_tv\right\|_{L^{p'}\big((t_1,t_2);\;W^{-1,p'}(B_R)\big)}\Big)^{1+\frac pN}.
\end{split}
\]
\end{proposition}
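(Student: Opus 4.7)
My plan is to test equation \eqref{e-approx} with the admissible function $w=u_\eps-v$ on the interval $(t_1,\tau)$ for arbitrary $\tau\in(t_1,t_2)$, then use the structure conditions together with the Hardy--Kato inequality (Corollary~\ref{lem-hardy-Kp}) to absorb the troublesome terms on the left; the second estimate will then follow from the first by standard parabolic Gagliardo--Nirenberg interpolation.

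Since $u_\eps=v$ on $\P Q$, the difference $w=u_\eps-v$ belongs to $L^p\big((t_1,t_2);W^{1,p}_0(B_R)\big)$ and $w(\cdot,t_1)=0$. Testing the equation against $w$ on $(t_1,\tau)$ (with the time derivative handled by Steklov averaging in the standard manner, justified by Proposition~\ref{appr-reg}) I would obtain the identity
\begin{equation*}
\tfrac12\int_{B_R}w^2(\tau)\,dx+\int_{t_1}^\tau\!\int_{B_R}\A_\eps(u_\eps,\n u_\eps)\cdot\n u_\eps\,dx\,dt
=\int_{t_1}^\tau\!\int_{B_R}\A_\eps\cdot\n v\,dx\,dt+\int_{t_1}^\tau\!\int_{B_R}b_\eps\,w\,dx\,dt-\int_{t_1}^\tau\langle\partial_tv,w\rangle\,dt.
\end{equation*}
From \eqref{e1.2b} the left-hand side is bounded below by $\tfrac12\|w(\tau)\|_{L^2(B_R)}^2+c_0\|\n u_\eps\|^p_{L^p((t_1,\tau)\times B_R)}$. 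Two terms on the right are routine: the $\A_\eps\cdot\n v$ term is handled by Young's inequality as $\delta\|\n u_\eps\|^p_{L^p}+C_\delta\|\n v\|^p_{L^p}$, and the duality pairing gives $\big|\int\langle\partial_tv,w\rangle\big|\le\|\partial_tv\|_{L^{p'}(W^{-1,p'})}\|\n w\|_{L^p}$, to be further split by Young.

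The delicate piece is $\int b_\eps\,w$. Splitting $|b_\eps w|\le g|\n u_\eps|^{p-1}|w|+f|u_\eps|^{p-1}|w|+f|w|$, applying Young's inequality and using $u_\eps=w+v$, I reduce all contributions to $\delta|\n u_\eps|^p$, $C_\delta(g^p+f)|w|^p$, $f|v|^p$ and $f$ (up to harmless constants). The $|w|^p$ terms are the crux: Corollary~\ref{lem-hardy-Kp} with $\theta=w$, applied to $g^p+f$, gives
\begin{equation*}
\int_{B_R}(g^p+f)|w|^p\,dx\le\gamma\sup_{x\in B_R}W_p^{g^p+f}(x,2R)^{p-1}\int_{B_R}|\n w|^p\,dx,
\end{equation*}
and since $|\n w|^p\le C(|\n u_\eps|^p+|\n v|^p)$, the hypothesis that $\sup_{x\in B_R}W_p^{g^p+f}(x,2R)$ is sufficiently small, together with small enough $\delta$, lets me absorb all $|\n u_\eps|^p$ contributions back on the left. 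Taking the supremum in $\tau$ and using $\|u_\eps\|_{L^2}^2\le 2(\|w\|_{L^2}^2+\|v\|_{L^2}^2)$ yields the first estimate.

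For the second estimate I would invoke the Gagliardo--Nirenberg inequality $\|u(t)\|^q_{L^q(B_R)}\le C\|u(t)\|_{L^2(B_R)}^{q-p}\|u(t)\|_{W^{1,p}(B_R)}^p$ with $q=p+2p/N$, applied after decomposing $u_\eps=w+v$ and using Poincar\'{e} on $w$ to control its $L^p$ norm by $\|\n w\|_{L^p}$. Integrating in time, pulling $\sup_t\|u_\eps(t)\|_{L^2}^{2p/N}$ outside the integral, and substituting the first estimate produces the claimed bound with exponent $1+p/N$. The principal obstacle throughout is the bookkeeping required to track the smallness of the Wolff potential through each Young-splitting, so that every $|w|^p$ and every $|\n u_\eps|^p$ contribution ends up with a coefficient small enough to be absorbed; once this is carried out carefully, no further genuinely new ideas are needed.
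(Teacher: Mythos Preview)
Your proposal is correct and follows essentially the same route as the paper: test \eqref{e-approx} with $u_\eps-v$ (the paper does this via Steklov averages), use the structure conditions \eqref{e1.2b} with Young's inequality, absorb the $(g^p+f)|u_\eps-v|^p$ terms through Corollary~\ref{lem-hardy-Kp} under the smallness assumption on the Wolff potential, and then derive the second estimate from the first by the H\"older--Sobolev (equivalently Gagliardo--Nirenberg) interpolation applied to $u_\eps-v$. The only cosmetic difference is that the paper records the interpolation step as H\"older followed by the Sobolev embedding rather than citing Gagliardo--Nirenberg directly.
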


\begin{proof} Fix $(s,\tau)\Subset(t_1,t_2)$. Test
\eqref{e-approx} by $\xi=T_h(u_h - v_h)$ with
$h<\min\{t_2-\tau, s-t_1\}$, to obtain that
\[
\begin{split}
\frac12 \int_{B_R}[(u_h(\tau)-v_h(\tau))^2- &(u_h(s)-v_h(s))^2]dx  +
\int\limits_s^\tau\int\limits_{B_R} \<T_h\big[\A(u,\n u)\big], \n(u_h-v_h)\> dx\,dt\\
= & \int\limits_s^\tau\int\limits_{B_R} T_h[b(u,\n u)] (u_h-v_h) dx\,dt
+ \int\limits_s^\tau\int\limits_{B_R} (u_h-v_h) (\partial_tv_h)dx\,dt.
\end{split}
\]
Note that
\[
\left|\int\limits_s^\tau\int\limits_{B_R} (u_h-v_h) (\partial_tv_h)dx\,dt\right|
\le \left(\int\limits_s^\tau\int\limits_{B_R} |\n(u_h-v_h)|^pdx\,dt\right)^{\frac1p}
\left\|\partial_tv_h\right\|_{L^{p'}\big((s,\tau);\;W^{-1,p'}(B_R)\big)}.
\]
So we can pass to the limit as $h\to 0$ and then to the limit as $s\to t_1$
to obtain that
\[
\begin{split}
&\tfrac12\sup_{\tau\in (t_1,t_2)}\int_{B_R}(u(\tau)-v(\tau))^2dx +
\iint_Q\<\A(u,\n u),\n u\>  dx\,dt \\
&\le \iint_Q[|\A(u,\n u)|\,|\n v| + |b(u,\n u)|\,|u-v|]dx\,dt
 + \left(\iint_Q |\n(u-v)|^pdx\,dt\right)^{\frac1p}
\left\|\partial_tv\right\|_{L^{p'}((t_1,t_2);\;W^{-1,p'}(B_R))}.
\end{split}
\]
Then, using  structure conditions \eqref{e1.2b} and the Young
inequality 
we obtain that, for all $\d>0$ there exists $\g>0$ such that
\[
\begin{split}
\sup_{\tau\in (t_1,t_2)}\int_{B_R}\big(u(\tau)-v(\tau)\big)^2 dx & +
\iint_Q |\n u|^pdx\,dt
\le \d \iint_Q \big|\n (u-v)\big|^pdx\,dt + \g \iint_Q g^p|u-v|^p dx\,dt\\
&+\g \iint_Q f|u-v|^p dx\,dt + \g\iint_Q f|v|^pdx\,dt \\
&+\g\iint_Q f dx\,dt + \d^{-\frac1{p-1}}\g\left\|\partial_tv\right\|_{L^{p'}\big((t_1,t_2);\;W^{-1,p'}(B_R)\big)}.
\end{split}
\]
The second and third terms on the right hand side are estimated
by Corollary~\ref{lem-hardy-Kp}. Hence we have that
\[
\begin{split}
\sup_{\tau\in (t_1,t_2)}\int_{B_R}\big(u(\tau)-v(\tau)\big)^2 dx & +
\iint_Q |\n u-\n v|^pdx\,dt
\le \g \iint_Q \big|\n v\big|^pdx\,dt
 + \g\iint_Q f|v|^pdx\,dt \\
&+\g\iint_Q f dx\,dt + \g\left\|\partial_tv\right\|_{L^{p'}\big((t_1,t_2);\;W^{-1,p'}(B_R)\big)}.
\end{split}
\]
Finally, by the H\"{o}lder and Sobolev inequalities we conclude that
\[
\begin{split}
\iint_Q|u-v|^{p+\frac{2p}N}dx\,dt \le & \int\limits_{t_1}^{t_2}
\left(\,\int\limits_{B_R}|u-v|^{\frac{pN}{N-p}}dx\right)^{\frac{N-p}N}
\left(\,\int\limits_{B_R}|u-v|^2dx\right)^{\frac pN}dt
\\
\le & \g\iint_Q|\n u - \n v|^pdx\,dt
\left(\sup_{\tau\in (t_1,t_2)}\int_{B_R}\big(u(\tau)-v(\tau)\big)^2 dx\right)^{\frac pN}.
\end{split}
\]
\end{proof}

The preceding proposition together with Proposition~\ref{max_u}
turns the a priori estimate of Proposition~\ref{apriori1}
into an a posteriori one, as the following corollary states.

\begin{corollary}
\label{apriori1-1} Let conditions of
Proposition~\ref{funct_int_est} be fulfilled. Assume that
$(f^2+f_1^2+g_1^2+f_2^2+g_2^2)\ind_{B_R}\in PK$. Then, for every
$\a\ge 0$ and $l>0$ there exist $\beta>0$ and $\gamma_{l,\a}>0$
independent of $\eps$ such that
\[
\esssup_{t\in[t_1',t_2']}\int_{B'} |\n u_\eps|^{2+2\a}dx
+\iint_{Q'} \Big|D\big(\n u_\eps(|\n u_\eps|-l)_+^{\a+\frac p2-1}\big)\Big|^2dxd\tau\le \gamma_{l,\a},
\]
provided $(g_1^2+g_2^2)\ind_{B_R}\in PK_\beta$.
\end{corollary}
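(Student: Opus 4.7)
The plan is to convert the a priori bound of Proposition~\ref{apriori1} into an a posteriori one by controlling each ingredient on the right-hand side of \eqref{ep2.8} by a constant independent of $\eps$. I pick a chain of nested cylinders $Q'\Subset Q^\sharp\Subset Q^\dagger\Subset Q''\Subset Q$ and record first that, since $F^2\ind_{B_R}\in PK\subset L^1_{loc}(\O)$, the integral $\iint_{Q^\sharp}F^2\,dx\,d\tau$ is already a fixed finite constant independent of $\eps$. The only $\eps$-dependent quantities left on the right-hand side of \eqref{ep2.8} are the $L^p$-norm of $\n u_\eps$ on $Q^\sharp$ and the implicit uniform bound $|u_\eps|\le M$ required by Proposition~\ref{apriori1}.

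The first step is to invoke Proposition~\ref{funct_int_est}, whose hypotheses are precisely those of the corollary. It provides, uniformly in $\eps$, bounds on $\iint_Q|\n u_\eps|^p\,dx\,d\tau$, on $\sup_\tau\int_{B_R}u_\eps^2(\tau)\,dx$ and on $\iint_Q|u_\eps|^{p+2p/N}dx\,d\tau$, in terms of the fixed boundary data $v$ and $\partial_tv$. The second step is to feed these bounds into Proposition~\ref{max_u} applied to the pair of cylinders $(Q^\dagger,Q'')$: the three ingredients on the right-hand side of the $\sup|u_\eps|$-estimate there are $\sup_\tau\int_{B''}u_\eps^2$, $\iint_{Q''}|u_\eps|^{p+\frac1{Np'}}$ (controlled by step one together with H\"older) and $\sup_{x\in B''}W_p^{g^p+f}(x,2\rho)$, finite by the smallness assumption. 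This yields a constant $M$, independent of $\eps$, with $\|u_\eps\|_{L^\infty(Q^\dagger)}\le M$.

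Having this uniform sup-bound on $Q^\dagger$, I then apply Proposition~\ref{apriori1} on the pair $(Q',Q^\sharp)$, at the given $\a\ge0$ and $l>0$. The right-hand side of \eqref{ep2.8} reads
\[
\g\left(\iint_{Q^\sharp}\bigl(|\n u_\eps|^p+F^2+1\bigr)dx\,d\tau\right)^{\a+1}+\g\left(\iint_{Q^\sharp}(F^2+1)dx\,d\tau\right)^{\frac N{N+2}},
\]
in which the gradient term is bounded by step one and the $F^2$-term by the $L^1_{loc}$ inclusion; both bounds are independent of $\eps$, and choosing $\b$ as in Proposition~\ref{apriori1} yields the desired constant $\gamma_{l,\a}$.

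The only delicate point is bookkeeping: the nested cylinders must be arranged so that each of the three propositions applies on the correct pair, and the smallness of $W_p^{g^p+f}$ assumed in Proposition~\ref{funct_int_est} must be reconciled with the $PK_\b$ hypothesis of Proposition~\ref{apriori1}, which nominally asks for $g^2+g_1^2+g_2^2\in PK_\b$ whereas the corollary only hypothesises $g_1^2+g_2^2\in PK_\b$. The missing $g^2$-contribution is absorbed through Corollary~\ref{lem-PK-Kp} (or equivalently Proposition~\ref{classes}), which turns the smallness of $\sup_x W_p^{g^p}(x,2R)$ into a form bound for $g^2$ with a small constant, possibly after a further reduction of $\b$. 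Beyond this bookkeeping the argument is a mechanical substitution.
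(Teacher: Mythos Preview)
Your proposal is correct and follows exactly the route the paper indicates: the paper's own ``proof'' is the single sentence preceding the corollary, stating that Proposition~\ref{funct_int_est} together with Proposition~\ref{max_u} converts the a~priori estimate of Proposition~\ref{apriori1} into an a~posteriori one. You have merely spelled out this chain (energy bounds $\Rightarrow$ uniform $L^\infty$ bound $\Rightarrow$ apply \eqref{ep2.8}), and in addition you correctly flag and resolve the small discrepancy between the $PK_\beta$ hypothesis on $g^2+g_1^2+g_2^2$ in Proposition~\ref{apriori1} and the stated hypothesis on $g_1^2+g_2^2$ here, via Proposition~\ref{classes} and the assumed smallness of $W_p^{g^p}$ --- a point the paper leaves implicit.
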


In the following proposition we prove that the solutions $u_\eps$ and certain functions of their gradients are locally
Lipschitz continuous in time variable, with values in certain Banach spaces, uniformly in $\eps$.  This will
be used to apply a compactness result from \cite{simon}.

\begin{proposition}\label{time_derivative}
Let assumptions of Corollary~\ref{apriori1-1} be fulfilled.
Then for all $\sigma>N$, $\a\ge p$ and $l>0$, $B'\Subset B_R$
and $(t_1',t_2')\Subset(t_1,t_2)$ there are constants
$\beta,\g>0$ independent of $\eps$ such that, if
$(g_1^2+g_2^2)\ind_{B_R}\in PK_\beta$, then, for all
$h\in(0,t_2'-t_1')$,
\[
\begin{split}
&\int\limits_{t_1'}^{t_2'-h}\Big\|u_\eps(t+h)-u_\eps(t)\Big\|_{W^{-1,\frac
p{p-1}}(B')}dt\le \g h;
\\
&\int\limits_{t_1'}^{t_2'-h}\Big\|\big(|\nabla u_\eps(t+h)|-l\big)_+^{\a}\nabla u_\eps(t+h)
-\big(|\nabla u_\eps(t)|-l\big)_+^{\a}\nabla u_\eps(t)\Big\|_{W^{-1,\frac
\sigma{\sigma-1}}(B')}dt\le \g h.
\end{split}
\]
\end{proposition}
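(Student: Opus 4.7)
}

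The first inequality is a direct consequence of Proposition~\ref{Dt}. Writing $u_\eps(t+h)-u_\eps(t)=\int_t^{t+h}\partial_\tau u_\eps\,d\tau$, one obtains
\[\|u_\eps(t+h)-u_\eps(t)\|_{W^{-1,p'}(B')}\le\int_t^{t+h}\|\partial_\tau u_\eps(\tau)\|_{W^{-1,p'}(B')}\,d\tau,\]
and Fubini gives $\int_{t_1'}^{t_2'-h}(\cdots)\,dt\le h\,\|\partial_\tau u_\eps\|_{L^1((t_1',t_2');W^{-1,p'}(B'))}\le\g h$ after using the $L^{p'}\hookrightarrow L^1$ embedding on a bounded interval and Proposition~\ref{Dt} (whose bound is uniform in~$\eps$ by the a priori estimates already established).

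For the second inequality the strategy is to identify $\partial_\tau w_\eps$ with $w_\eps=(|\n u_\eps|-l)_+^\a\n u_\eps$ via the chain rule for the map $F(z):=(|z|-l)_+^\a z$, then dualize against $\zeta\in C_c^\infty(B';\R^N)$ with $\|\zeta\|_{W^{1,\sigma}(B')}\le1$. Since $u_\eps\in L^2_{loc}(W^{2,2}_{loc})\cap C(W^{1,2}_{loc})$ by Proposition~\ref{appr-reg}, and $\partial_\tau u_\eps\in L^2_{loc}(Q)$ uniformly in $\eps$ through the smoothed equation and the uniform a priori bounds on $|\n u_\eps|^{p-2}|D^2u_\eps|$ from Lemma~\ref{step1}, the chain rule is justified. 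A short computation gives
\[\int_{B'}\partial_\tau w_\eps\cdot\zeta\,dx=\int_{B'}\partial_\tau\n u_\eps\cdot\Lambda_\zeta\,dx=-\int_{B'}\partial_\tau u_\eps\,\div\Lambda_\zeta\,dx,\]
where $(\Lambda_\zeta)_j=(|\n u_\eps|-l)_+^\a\zeta_j+\a\chi_{\{|\n u_\eps|>l\}}(|\n u_\eps|-l)_+^{\a-1}\frac{(\n u_\eps\cdot\zeta)}{|\n u_\eps|}\partial_ju_\eps$ is $F'(\n u_\eps)\zeta$; the spatial integration by parts is legitimate because $\zeta$ (hence $\Lambda_\zeta$) vanishes on $\partial B'$ and $\Lambda_\zeta\in W^{1,1}_0(B')$.

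Pointwise one checks $|\div\Lambda_\zeta|\le C_\a\bigl[(|\n u_\eps|-l)_+^\a|D\zeta|+(|\n u_\eps|-l)_+^{\a-1}|D^2u_\eps|\,|\zeta|\bigr]$. As $\sigma>N$, Sobolev gives $\|\zeta\|_{L^\infty(B')}\le C\|\zeta\|_{W^{1,\sigma}(B')}$, and H\"older yields
\[\|\div\Lambda_\zeta(\tau)\|_{L^2(B')}\le C\|\zeta\|_{W^{1,\sigma}(B')}\Phi_\eps(\tau),\]
where $\Phi_\eps(\tau)=\big\|(|\n u_\eps|-l)_+^\a\big\|_{L^{\frac{2\sigma}{\sigma-2}}(B')}+\big\|(|\n u_\eps|-l)_+^{\a-1}|D^2u_\eps|\big\|_{L^2(B')}$. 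Therefore, dualizing,
\[\|\partial_\tau w_\eps(\tau)\|_{W^{-1,\sigma/(\sigma-1)}(B')}\le C\|\partial_\tau u_\eps(\tau)\|_{L^2(B')}\,\Phi_\eps(\tau),\]
and Cauchy--Schwarz in $\tau$ combined with Fubini (exactly as for the first part) reduces the second claim to uniform bounds $\|\partial_\tau u_\eps\|_{L^2(L^2)}\le\g$ and $\int \Phi_\eps(\tau)^2\,d\tau\le\g$.

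The main technical work is showing the latter two bounds are uniform in $\eps$. The bound on $\partial_\tau u_\eps$ comes from $\partial_\tau u_\eps=\div\A_\eps+b_\eps$ combined with structure conditions \eqref{e1.2b}--\eqref{e3a}, the pointwise estimate $|\div\A_\eps|\le C(|\n u_\eps|^{p-2}|D^2u_\eps|+(g_2+f_{1,\eps})(1+|\n u_\eps|^{p-1})+f_{2,\eps})$, and the $L^2$ information provided by Lemma~\ref{step1} together with Corollary~\ref{apriori1-1}. For $\Phi_\eps$, Corollary~\ref{apriori1-1} used with a sufficiently large auxiliary parameter $\a'$ supplies uniform $L^\infty_\tau L^r_x$ bounds on $\n u_\eps$ for arbitrary $r<\infty$ (controlling the first summand of $\Phi_\eps$), while the same corollary with $\a'=\a-p/2$ (meaningful because $\a\ge p$) controls $\iint(|\n u_\eps|-l)_+^{2\a-2}|D^2u_\eps|^2\,dx\,d\tau$ and hence the second summand. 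The principal obstacle is precisely this bookkeeping of exponents: one must choose $\a'$ large enough for Corollary~\ref{apriori1-1} to hit the integrability indices $\frac{2\sigma}{\sigma-2}$ and $2$ dictated by $\sigma>N$ and by the structure of $\div\Lambda_\zeta$, while keeping $\beta$ (determined by the $PK_\beta$ condition on $g_1^2+g_2^2$) dependent only on $\sigma$ and $\a$.
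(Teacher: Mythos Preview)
Your treatment of the first inequality is correct and matches the paper.

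For the second inequality there is a genuine gap. You reduce the claim to the uniform bound $\|\partial_\tau u_\eps\|_{L^2((t_1',t_2');L^2(B'))}\le\g$, and you justify it via the pointwise estimate $|\div\A_\eps|\le C\big(|\n u_\eps|^{p-2}|D^2u_\eps|+\ldots\big)$. But structure condition \eqref{e2} gives $|\partial_z\A_\eps|\le c_1(|\n u_\eps|^{p-2}+1)+\eps$, so the correct pointwise bound carries an extra $|D^2u_\eps|$ term with no degenerate weight in front. Controlling this in $L^2$ uniformly in $\eps$ would require $\iint_{Q'}|D^2u_\eps|^2\le\g$, which neither Lemma~\ref{step1} nor Corollary~\ref{apriori1-1} provides: all the Hessian estimates in the paper carry a weight $(|\n u_\eps|-l)_+^{\cdots}$ that vanishes on $\{|\n u_\eps|\le l\}$, precisely because the equation degenerates there. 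The $W^{2,2}_{loc}$ regularity of Proposition~\ref{appr-reg} is \emph{not} uniform in $\eps$ (it comes from the $\eps z$ regularization). Even restricting to $\{|\n u_\eps|>l\}$ and exploiting that $\Lambda_\zeta$ vanishes elsewhere does not rescue the factorization $\|\partial_\tau u_\eps\|_{L^2}\|\div\Lambda_\zeta\|_{L^2}$, since near $|\n u_\eps|=l^+$ the a~priori estimates lose control of $|D^2u_\eps|$ while only $\div\Lambda_\zeta$ carries a vanishing factor, and the product splitting discards this cancellation.

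The paper avoids this obstacle by \emph{not} isolating $\partial_\tau u_\eps$ in $L^2$. Instead, after reaching $-\int_{B'}\partial_\tau u_\eps\,\div\Lambda_\zeta\,dx$ it substitutes $\partial_\tau u_\eps=\div\A_\eps+b_\eps$ and integrates by parts once more, which is exactly the content of the differentiated weak formulation in Lemma~\ref{lem2}. This yields $\int_{B'}\mathfrak h_u[D\Lambda_\zeta]\,dx$ with $\mathfrak h_u[M]=\tr\{M(\partial_z\A\,D^2u+\partial_x\A)\}+\langle M\partial_u\A,\n u\rangle+b\,\tr M$. Now every appearance of $D^2u_\eps$ is paired \emph{inside the integrand} with a factor $(|\n u_\eps|-l)_+^{\a-2}$ or higher coming from $D\Lambda_\zeta$, so the H\"older splitting matches the weighted Hessian bounds of Corollary~\ref{apriori1-1} directly and no unweighted $L^2$ control of $D^2u_\eps$ (or of $\partial_\tau u_\eps$) is ever needed. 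Your argument becomes correct if you perform this extra integration by parts before estimating.
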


\begin{proof}
The first assertion follows from Propositions~\ref{Dt}
and~\ref{funct_int_est}. 

To shorten the proof of the second assertion, we introduce some
notation. Let $F=g+g_1+g_2+f+f_1+f_2$. For $l>0$ and
$\alpha\ge2$ define $w_{l,\alpha}:\mathbb{R}^N\to\mathbb{R}^N$
as follows. For $\zeta\in\mathbb{R}^N$ let $w(\zeta)\equiv
w_{l,\alpha}(\zeta)=(|\zeta|-l)_+^\alpha\zeta$. Also, we define
$\mathfrak{h}_u:Q\to (\mathbb{R}^{N\times N})^*$ as follows.
For $M\in \mathbb{R}^{N\times N}$,
\[
\mathfrak{h}_u[M]:= \tr\big\{M(\partial_z\mathbf{A}\,D^2 u + \partial_x\mathbf{A})\big\}
+ \<M\partial_u\mathbf{A}, \nabla u\> + b(u,\nabla u)\tr M.
\]
Then it follows from Lemma~\ref{lem2} that, for any
$\tau\in(t_1,t_2-h)$
\[
\int \<\nabla u, \zeta\> dx\Big|_\tau^{\tau+h} - \int\limits_\tau^{\tau+h}\int\limits_B \<\nabla u, \partial_t\zeta\> dxdt
= \int\limits_\tau^{\tau+h}\int\limits_B
\mathfrak{h}_u[D\zeta]dxdt \quad \mbox{for all }\zeta\in H^1_c(Q\to\mathbb{R}^N ).
\]
Recall that we have to verify that, for all $B'\Subset B_R$,
$(t_1',t_2')\Subset (t_1,t_2)$ and $h\in(0,t_2'-t_1')$,
\[
\begin{split}
&\int\limits_{t_1'}^{t_2'-h}\left\|\<w(\n u(\tau+h))-w(\n u(\tau))\right\|_{W^{-1,\sigma'}(B')}dt \\
\equiv
&\int\limits_{t_1'}^{t_2'-h}\sup\limits_{\|\zeta\|_{W^{1,\sigma}_0(B')\le1}}
\left|\int\<w(\n u(\tau+h))-w(\n u(\tau)),\zeta\>dx \right|dt \le \g h,
\end{split}
\]
with some $\g>0$ independent of $h>0$ and $u$.

Note that, for a vector field $\zeta$ differentiable in $t$,
one has that $w(\zeta)$ is differentiable in $t$ and
\[
\partial_t w(\zeta) = (|\zeta|-l)_+^\alpha\partial_t\zeta + \alpha \frac{\zeta}{|\zeta|}(|\zeta|-l)_+^{\alpha-1}
\<\zeta,\partial_t\zeta\>.
\]
Hence by using the Steklov averaging one obtains that, for all
$\zeta\in C^1_c(B'\to\mathbb{R}^N )$,
\begin{equation}\label{w-equation}
   \int \<w(\nabla u), \zeta\> dx\Big|_\tau^{\tau+h}  =  \int\limits_\tau^{\tau+h}\int\limits_{B_R}
   \mathfrak{h}_u[D\tilde\zeta_1+D\tilde\zeta_2]dxdt,
\end{equation}
with
$
\tilde\zeta_1 = (|\nabla u|-l)_+^\alpha\zeta \mbox{ and }
\tilde\zeta_2 = \alpha \frac{\nabla u}{|\nabla u|}(|\nabla u|-l)_+^{\alpha-1}
\<\nabla u,\zeta\>.
$
So, for $B'\Subset B_R$, $(t_1',t_2')\Subset (t_1,t_2)$ and
$h\in(0,t_2-t_2')$,
 $\zeta\in C^1_c(B'\to\mathbb{R}^N )$, we have
\[
\int\limits_{t_1'}^{t_2'-h}\sup\limits_{\|\zeta\|_{W^{1,\sigma}_0(B')}\le1}\left|\int\<w(\n u(t+h))-w(\n u(t)),\zeta\>dx \right|dt \le
h\int\limits_{t_1'}^{t_2'}\sup\limits_{\|\zeta\|_{W^{1,\sigma}_0(B')}\le1}\int\limits_{B'} \big|\mathfrak{h}_u[D\tilde\zeta_1+D\tilde\zeta_2]\big|dxdt.
\]
Now observe that, by assumptions \eqref{e1.2b} and
\eqref{e1}--\eqref{e3a}, for every $M\in \mathbb{R}^{N\times
N}$,
\begin{equation}\label{h_est}
\begin{split}
\big|\mathfrak{h}_u[M]\big|\le &
\g|M|\big\{(|\n u|^{p-2}+1)|D^2 u| + (g+g_2)|\n u|^{p-1} + g_1 |\n u|^{p-2} + f + f_1 + f_2\big\}
\\
\le & \g_{p,l}|M|\big\{[(|\n u|-l)_+^{p-2}+1]|D^2 u| + F[(|\n
u|-l)_+^{p-1}+1]\big\}.
\end{split}
\end{equation}
In  turn, we compute that
\[
D\tilde\zeta_1 = (|\nabla u|-l)_+^\alpha D\zeta
+ \alpha \frac{\nabla u}{|\nabla u|}(|\nabla u|-l)_+^{\alpha-1}\zeta\otimes\left(D^2u \frac{\n u}{|\n u|}\right)
\]
and
\[
\begin{split}
D\tilde\zeta_2 = & \alpha \frac{\nabla u}{|\nabla u|}(|\nabla u|-l)_+^{\alpha-1}
\frac{\n u}{|\n u|}\otimes\big\{(D\zeta)^\top\n u + D^2u\zeta \big\}
\\
& + \alpha \frac{\nabla u}{|\nabla u|}(|\nabla u|-l)_+^{\alpha-1}\<\n u,\zeta\>
\left(\frac{D^2u}{|\n u|} - \frac{\n u}{|\n u|}\otimes \left(\frac{D^2u}{|\n u|}\frac{\n u}{|\n u|}\right)\right)
\\
& + \alpha (\alpha -1)\frac{\nabla u}{|\nabla u|}(|\nabla u|-l)_+^{\alpha-2}\<\n u,\zeta\>
\frac{\n u}{|\n u|}\otimes \left(D^2u \frac{\n u}{|\n u|} \right).
\end{split}
\]
Hence
\begin{equation}\label{Dzeta}
\begin{split}
&|D\tilde\zeta_1 + D\tilde\zeta_2|\le  \g_{\alpha}\big(|D\zeta|\,|\n u|(|\nabla u|-l)_+^{\alpha-1}
+ |\zeta|\,|D^2u|\,|\n u|(|\nabla u|-l)_+^{\alpha-2} \big)
\\
\le & \g_{\alpha,l}\left\{|D\zeta|\left[(|\nabla u|-l)_+^{\alpha} + (|\nabla u|-l)_+^{\alpha-1}\right]
+|\zeta|\,|D^2u|\left[(|\nabla u|-l)_+^{\alpha-1} + (|\nabla u|-l)_+^{\alpha-2}\right]\right\}.
\end{split}
\end{equation}
So it follows from \eqref{w-equation}--\eqref{Dzeta} that
\[
\begin{split}
\frac1{\g_{\alpha,p,l}}\int\limits_{B'} \big|\mathfrak{h}_u[D\tilde\zeta_1+D\tilde\zeta_2]\big|dx
\le &
\int\limits_{B'} |\zeta|\,|D^2u|^2\left[(|\nabla u|-l)_+^{p+\alpha-3} + (|\nabla u|-l)_+^{\alpha-2}\right]dx
\\
& + \int\limits_{B'} |\zeta|\,|D^2u|F\left[(|\nabla u|-l)_+^{p+\alpha-2} + (|\nabla u|-l)_+^{\alpha-2}\right]dx
\\
& + \int\limits_{B'} |D\zeta|\,|D^2u|\left[(|\nabla u|-l)_+^{p+\alpha-2} + (|\nabla u|-l)_+^{\alpha-1}\right]dx
\\
& + \int\limits_{B'} |D\zeta|F\left[(|\nabla u|-l)_+^{p+\alpha-1} + (|\nabla u|-l)_+^{\alpha-1}\right]dx.
\end{split}
\]
Now it follows from the H\"{o}lder inequality that
\[
\begin{split}
\frac1{\g_{\alpha,p,l}}\int\limits_{B'} \big|\mathfrak{h}_u[D\tilde\zeta_1+D\tilde\zeta_2]\big|dx
\le  \|\zeta\|_\infty &\int\limits_{B'} |D^2u|^2\left[(|\nabla u|-l)_+^{p+\alpha-3} + (|\nabla u|-l)_+^{\alpha-2}\right]dx
\\
+ \|\zeta\|_\infty & \left(\int\limits_{B'} F^2dx\right)^{\frac12}
\left(\int\limits_{B'} |D^2u|^2\left[(|\nabla u|-l)_+^{2p+2\alpha-4} + (|\nabla u|-l)_+^{2\alpha-4}\right]dx\right)^{\frac12}
\\
 +
\|D\zeta\|_2 &
\left(\int\limits_{B'}|D^2u|^2\left[(|\nabla u|-l)_+^{2p+2\alpha-4} + (|\nabla u|-l)_+^{2\alpha-2}\right]dx
\right)^{\frac12}
\\
 + \|D\zeta\|_N & \left(\int\limits_{B'}F^2dx\right)^{\frac12}
\left[\int\limits_{B'}(|\nabla u|-l)_+^{(p+\alpha-1)\frac{2N}{N-2}}
+\int\limits_{B'}(|\nabla u|-l)_+^{(\alpha-1)\frac{2N}{N-2}}
\right]^{\frac{N-2}{2N}}.
\end{split}
\]

Thus,
\[
\int\limits_{B'} \big|\mathfrak{h}_u[D\tilde\zeta_1+D\tilde\zeta_2]\big|dx
\le c(u)(\|\zeta\|_\infty + \|D\zeta\|_2+\|D\zeta\|_N)
\]
with
\[\begin{split}
c(u)=\g_{\a,p,l}\Big\{ &
\int\limits_{B'}|D^2u|^2\left[(|\nabla u|-l)_+^{2p+2\alpha-4} + (|\nabla u|-l)_+^{\alpha-2}\right]dx
\\
& + \big(\int\limits_{B'}\left[|\nabla u|^{p+\alpha-1} +|\nabla u|^{\alpha-1}\right]^{\frac{2N}{N-2}}
dx\big)^{\frac{N-2}N} +
\int\limits_{B'}F^2dx \Big\}.
\end{split}\]
Finally, by the Sobolev embedding theorem, for any $\sigma>N$,
one has $\|\zeta\|_\infty + \|D\zeta\|_2+ \|D\zeta\|_N\le c
\|\zeta\|_{W^{1,\sigma}_0(B')}$, and by
Corollary~\ref{apriori1-1}
$c(u)$ is bounded by a
constant independent of $u$ provided $\a\ge p$. So the second
assertion follows.
\end{proof}

 The following lemma serves to assert the pointwise convergence of the gradient.

\begin{lemma}\label{pointwise}
Let $\xi_n$ be a sequence of a.e. finite vector fields such
that there exists $\a>0$ such that
$\xi_n(|\xi_n|-\frac1m)_+^\a$ converges a.e. as $n\to \infty$
for all $m\in \mathbb{N}$. Then $\xi_n$ converges a.e. as $n\to
\infty$.
\end{lemma}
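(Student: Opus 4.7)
The plan is to exploit the invertibility of the map $F_m\colon\R^N\to\R^N$ defined by $F_m(\zeta)=\zeta(|\zeta|-1/m)_+^\a$ when restricted to $\{|\zeta|\ge 1/m\}$. Writing $\zeta=r\o$ with $\o\in S^{N-1}$, one has $F_m(r\o)=r(r-1/m)_+^\a\o$, where the scalar function $r\mapsto r(r-1/m)_+^\a$ vanishes on $[0,1/m]$ and is a strictly increasing continuous bijection from $[1/m,\infty)$ onto $[0,\infty)$. Hence $F_m$ restricted to $\{|\zeta|\ge 1/m\}$ is a homeomorphism onto $\R^N$, with inverse continuous at every nonzero point.

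Set $\eta_n^{(m)}=F_m(\xi_n)$ and denote by $\eta^{(m)}$ its a.e.\ limit. Let $E$ be the full-measure set on which $\xi_n(x)$ is finite for every $n$ and $\eta_n^{(m)}(x)\to\eta^{(m)}(x)\in\R^N$ for every $m\in\N$. Fix $x\in E$. I would then split into two cases.

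In the first case, some $m_0$ satisfies $\eta^{(m_0)}(x)\ne 0$. Since $F_{m_0}$ vanishes on $\{|\zeta|\le 1/m_0\}$, the fact that $\eta_n^{(m_0)}(x)\to\eta^{(m_0)}(x)\ne 0$ forces $|\xi_n(x)|>1/m_0$ for all sufficiently large $n$. For such $n$, $\xi_n(x)=F_{m_0}^{-1}(\eta_n^{(m_0)}(x))$, and continuity of $F_{m_0}^{-1}$ at the nonzero point $\eta^{(m_0)}(x)$ yields $\xi_n(x)\to F_{m_0}^{-1}(\eta^{(m_0)}(x))$. In the complementary case, $\eta^{(m)}(x)=0$ for every $m$, and I would argue by contradiction: if $\xi_n(x)\not\to 0$ then there exist $\d>0$ and a subsequence with $|\xi_{n_k}(x)|\ge\d$; choosing $m$ with $1/m<\d/2$ gives $|\eta_{n_k}^{(m)}(x)|\ge\d(\d/2)^\a>0$, contradicting $\eta_n^{(m)}(x)\to 0$.

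The only scenario that needs a separate remark is $|\xi_n(x)|\to\infty$, which is ruled out automatically: in that case $|\eta_n^{(m)}(x)|\to\infty$ for every $m$, contradicting the hypothesis that $\eta_n^{(m)}(x)$ converges to a finite limit. No serious obstacle is present; the argument is a continuity/monotonicity exercise organised around the homeomorphism property of $F_m$.
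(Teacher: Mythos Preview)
Your proof is correct and rests on the same key observation as the paper's: the map $\zeta\mapsto\zeta(|\zeta|-1/m)_+^\a$ is a homeomorphism from $\{|\zeta|\ge 1/m\}$ onto $\R^N$, so one can recover $\xi_n$ from $\eta_n^{(m)}$ wherever the latter is nonzero. Your pointwise dichotomy (some $\eta^{(m_0)}(x)\ne0$ versus all $\eta^{(m)}(x)=0$) is a slightly more streamlined organisation than the paper's argument via the sets $E_{nm}$, $E_m=\liminf_n E_{nm}$ and $E=\bigcup_m E_m$, but the substance is the same.
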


\begin{proof}
Denote $\eta_n:=\xi_n(|\xi_n|-\frac1m)_+^\a$ and
$E_{nm}:=\{x:|\xi_n|\ge\frac1m\}$.

Note that the function $\phi_m(s):=s(s-\frac1m)_+^\a$ is a
homeomorphism $[\frac1m,\infty)\to [0,\infty)$. Let $\psi_m$
denote the inverse map. Then one has
\[
\xi_n\chi_{E_{nm}}=\eta_n\frac{\psi_m(|\eta_n|)}{|\eta_n|}.
\]
So there are vector fields $\zeta_m$, $m\in\mathbb{N}$ such
that $\xi_n\chi_{E_{nm}}\to \zeta_m$ a.e. as $n\to\infty$.

Let
\[
E_m=\liminf\limits_{n\to\infty}E_{nm}=\bigcup\limits_{N\in\mathbb{N}}
\bigcap\limits_{n\geq N}E_{nm}=\{x:\liminf\limits_{n\to\infty}|\xi_n|(x)\ge\tfrac1m\},
~E:=\bigcup\limits_{m\in\mathbb{N}}E_m=\{x:\liminf\limits_{n\to\infty}|\xi_n|(x)>0\}.
\]
Then, for every $x\in E_m$ there exists $N\in\mathbb{N}$ such
that $x\in E_{nm}$ for all $n\ge N$. Hence
\[
\lim\limits_{n\to\infty}\xi_n(x) = \lim\limits_{n\to\infty}\xi_n(x)\chi_{E_{nm}}=\zeta_m(x) \mbox{ for all } x\in E_m .
\]
Thus $\xi_n(x)\to \xi(x)$ as $n\to\infty$ for all $x\in E$. Note
that $|\xi|(x)\ge\frac1m$ for all $x\in E_m$. So
$\zeta_m(x)=\xi\chi_{\{|\xi|\ge\frac1m\}}(x)$ for all $m\in
\mathbb{N}$ and $x\in E$.

Further,
\[
E^c:=
\{x: \forall m,N\in \mathbb{N}~ \exists n \ge N \mbox{ such that }|\xi_n|(x)<\tfrac1m\}=\{x:\liminf\limits_{n\to\infty}|\xi_n|(x)=0\}.
\]
Therefore, for all $x\in E^c$ and $m\in\mathbb{N}$,
\[
|\zeta_m|(x)=\liminf\limits_{n\to\infty}|\xi_{n}|\chi_{E_{nm}}(x)=0.
\]
Now we define $\xi(x)=0$ for $x\in E^c$ so that
$\zeta_m(x)=\xi\chi_{\{|\xi|\ge\frac1m\}}(x)$ a.e. Finally, we have
\[
\limsup\limits_{n\to\infty}|\xi_n-\xi|\le \limsup\limits_{n\to\infty}|\xi_n\chi_{E_{nm}}-\zeta_m|
+ \limsup\limits_{n\to\infty} |\xi_n|\chi_{\{|\xi_n|<\frac1m\}} + |\xi|\chi_{\{|\xi|<\frac1m\}}
<\tfrac2m\to0 \mbox{ as }m\to\infty.
\]
\end{proof}

\begin{proof}[Proof of Theorem \ref{PK-existence}]
For $\eps>0$, let $\A_\eps$, $b_\eps$, and $u_\eps$ be as in
Proposition~\ref{approx}. 
Let $Q':=(t_1',t_2')\times B' \Subset Q$. Due to the embedding
$W^{1,q}(B')\Subset L^1(B')\subset
W^{-1,\frac\sigma{\sigma-1}}(B')$ for any $q\ge1$ and
$\sigma>N$,
it follows from~\cite[Theorem 5]{simon} and
Corollary~\ref{apriori1-1} and Proposition~\ref{time_derivative}
that, for any $\a\ge p$, $l>0$, the sets $\{u_\eps\}_{\eps>0}$
and $\big\{(|\nabla u_\eps|-l)_+^{\a}\nabla
u_\eps\big\}_{\eps>0}$ are compact in $L^1(Q')$. Using a
compact exhaustion of $Q$ and a standard diagonalization  we
conclude that there exists a subsequence $\eps_n\downarrow0$
such that, $u_n=u_{\eps_n}$ converges as $n\to\infty$ a.e. on
$Q$ and $\n u_n(|\n u_n|-\tfrac1m)_+^{\a}$ converges as
$n\to\infty$ a.e. on $Q$ for all $m\in \mathbb{N}$. Then by
Lemma~\ref{pointwise} it follows that $\n u_n$ converges as
$n\to\infty$ a.e. on $Q$. Let $u$ denote the pointwise limits
of $u_n$. Since $\n u_n$ is uniformly bounded in $L^q_{loc}(Q)$
for all $q>1$, we conclude that  $\n u\in L^q_{loc}(Q)$ for all
$q>1$ and $\n u_n\to \n u$ as $n\to\infty$ weakly in
$L^q_{loc}(Q)$. Since the weak and the pointwise limits
coincide, $\n u_n\to \n u$ as $n\to\infty$ a.e. on $Q$.

Now observe that
\[
\begin{split}
|\A_{\eps_n}(u_n,\n u_n)- \A(u,\n u)|\le |\A(u_n,\n u_n)- \A(u,\n u)| + |u_n-u|\int_0^1 |\partial_u\A(\o_s)|ds\\
+|\n u_n-\n u|\int_0^1 |\partial_z\A(\o_s)|ds, \quad \text{where}\ \ \o_s=((1-s)u_n+su,(1-s)\n u_n+s\n u).
\end{split}
\]
Using the structure conditions \eqref{e2}, \eqref{e3} we
infer that
$\A_{\eps_n}(u_n,\n u_n)\to \A(u,\n u)$ as
$n\to\infty$ a.e. on $Q$ and that, due to \eqref{e1.2b}, the
set $\{\A_{\eps}(u_\eps,\n u_\eps)\}_{\eps>0}$ is bounded in
$L^{\frac p{p-1}}_{loc}(Q)$.
Hence $\A_{\eps_n}(u_n,\n u_n)\to \A(u,\n u)$ as $n\to\infty$
weakly in $L^{\frac p{p-1}}_{loc}(Q)$.
Now we note that
\[
\begin{split}
&|b_{\eps_n}(u_n,\n u_n)-  b(u,\n u)|\le |b(u_n,\n u_n) - b(u,\n u)|\\
&+
(\ind_{\{|b(u_n,\n u_n) - b(u,\n u)|\ge 1/2\}}+\ind_{\{|b(u,\n u)|>1/(2\eps_n)\}})(|b(u_n,\n u_n)| +|b(u,\n u)|).
\end{split}
\]
Hence, due to \eqref{e1.2b}  $b_{\eps_n}(u_n,\n u_n) \to b(u,\n u)$ a.e. on $Q$.
Then 
by \eqref{e1.2b},
the set $\{b_{\eps_n}(u_n,\n u_n)\}$ is bounded in
$L^{\frac p{p-1}}_{loc}(Q)$. 
So $b_{\eps_n}(u_n,\n u_n)$ is
weakly compact in $L_{loc}^{p'}(Q)$ and hence $b(u,\n u)\in
L_{loc}^{p'}(Q)$ and $b_{\eps_n}(u_n,\n u_n) \to b(u,\n u)$ as $n\to
\infty$ weakly in $L_{loc}^{p'}(Q)$. Hence, for every $\theta\in
W_c^{1,p}(B)$, we have that
$
\iint\limits_Q \A_{\eps_n}(u_n,\n u_n)\n \theta dx\,d\tau \to \iint\limits_Q \A(u,\n u)\n \theta dx\,d\tau$ and
$\iint\limits_Q b_{\eps_n}(u_n,\n u_n)\theta dx\,d\tau \to \iint\limits_Q b(u,\n u)\theta dx\,d\tau$
as $n\to\infty$. Thus $u$ is a solution to \eqref{e0} satisfying estimate \eqref{main-est}.
\end{proof}


\section{Proof of Theorem~\ref{PK-aposteriori}}

In the proof we follow the idea from~\cite{tolksdorf}, with required modifications.
We start with the following technical lemma.

\begin{lemma}\label{DA}
There exist $c_p,\Gamma_p>0$ such that, for all $(x,t)\in
\Omega_T$, $\mu, \tilde\mu\in \mathbb{R}$, $\eta, \tilde\eta\in
\mathbb{R}^N$, one has
\begin{equation}\label{DA-est}
\begin{split}
&\<\A(x,t,\mu,\eta)   - \A(x,t,\tilde\mu,\tilde\eta),\eta - \tilde\eta\>
\ge  c_p(|\eta| + |\tilde\eta|)^{p-2}|\eta - \tilde\eta|^2\\
& - \Gamma_p\Big(f_1^{p'}(x)|\mu - \tilde\mu|^{p'} + g_1^2(x)|\tilde\eta|^{p-2}|\mu - \tilde\mu|^2
+g_1^2(x)|\eta-\tilde\eta|^{p-2}|\mu - \tilde\mu|^2\Big).
\end{split}
\end{equation}
\end{lemma}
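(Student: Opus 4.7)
The natural decomposition is
\[
\A(x,t,\mu,\eta)-\A(x,t,\tilde\mu,\tilde\eta) = \underbrace{\A(x,t,\mu,\eta)-\A(x,t,\mu,\tilde\eta)}_{=:I} + \underbrace{\A(x,t,\mu,\tilde\eta)-\A(x,t,\tilde\mu,\tilde\eta)}_{=:II}.
\]
I will bound $\<I,\eta-\tilde\eta\>$ from below to produce the full monotonicity term, bound $|\<II,\eta-\tilde\eta\>|$ from above using \eqref{e3}, and then use Young's inequality to absorb cross terms.

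For $I$, writing $\eta_s:=s\eta+(1-s)\tilde\eta$ and differentiating along the segment,
\[
\<I,\eta-\tilde\eta\> = \int_0^1\<(\part_z\A)(x,t,\mu,\eta_s)(\eta-\tilde\eta),\eta-\tilde\eta\>\,ds \ge c_0|\eta-\tilde\eta|^2\int_0^1|\eta_s|^{p-2}\,ds
\]
by \eqref{e1}. Using the well-known elementary inequality (valid for $p\ge 2$) $\int_0^1|\eta_s|^{p-2}\,ds\ge c_p(|\eta|+|\tilde\eta|)^{p-2}$ (verified by splitting into $|\eta-\tilde\eta|\ge\tfrac12(|\eta|+|\tilde\eta|)$ and its complement) yields
\[
\<I,\eta-\tilde\eta\>\ge 2c_p(|\eta|+|\tilde\eta|)^{p-2}|\eta-\tilde\eta|^2
\]
for a suitable $c_p>0$, which will be the leading term after absorption.

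For $II$, the bound \eqref{e3} along the segment from $\tilde\mu$ to $\mu$ gives
\[
|\<II,\eta-\tilde\eta\>|\le |\mu-\tilde\mu|\bigl(g_1(x)|\tilde\eta|^{p-2}+f_1(x)\bigr)|\eta-\tilde\eta|.
\]
The $f_1$ cross term is handled by Young with exponents $(p,p')$:
\[
f_1|\mu-\tilde\mu||\eta-\tilde\eta|\le \delta|\eta-\tilde\eta|^p + C_\delta f_1^{p'}|\mu-\tilde\mu|^{p'},
\]
and $|\eta-\tilde\eta|^p\le(|\eta|+|\tilde\eta|)^{p-2}|\eta-\tilde\eta|^2$ lets me absorb the first piece into the monotonicity term.

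The delicate point, and the only real obstacle, is the $g_1$ cross term: explaining why \emph{both} $g_1^2|\tilde\eta|^{p-2}|\mu-\tilde\mu|^2$ and $g_1^2|\eta-\tilde\eta|^{p-2}|\mu-\tilde\mu|^2$ must appear. I split into two cases. When $|\tilde\eta|\ge\tfrac12|\eta-\tilde\eta|$, I factor $|\tilde\eta|^{p-2}$ symmetrically and apply Young:
\[
g_1|\tilde\eta|^{p-2}|\mu-\tilde\mu||\eta-\tilde\eta|\le \delta|\tilde\eta|^{p-2}|\eta-\tilde\eta|^2+C_\delta g_1^2|\tilde\eta|^{p-2}|\mu-\tilde\mu|^2,
\]
with the first term absorbed into the monotonicity since $|\tilde\eta|^{p-2}\le(|\eta|+|\tilde\eta|)^{p-2}$. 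When $|\tilde\eta|<\tfrac12|\eta-\tilde\eta|$, the inequality $|\tilde\eta|^{p-2}\le|\eta-\tilde\eta|^{p-2}$ (which needs $p\ge 2$) gives
\[
g_1|\tilde\eta|^{p-2}|\mu-\tilde\mu||\eta-\tilde\eta|\le g_1|\eta-\tilde\eta|^{p-1}|\mu-\tilde\mu|\le\delta|\eta-\tilde\eta|^p+C_\delta g_1^2|\eta-\tilde\eta|^{p-2}|\mu-\tilde\mu|^2,
\]
and again the first piece is absorbed. Choosing $\delta$ small enough so that the total absorbed contribution does not exceed $c_p(|\eta|+|\tilde\eta|)^{p-2}|\eta-\tilde\eta|^2$ yields \eqref{DA-est} with some $\Gamma_p$.
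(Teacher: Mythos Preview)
Your proof is correct and follows the same overall scheme as the paper: express $\A(\mu,\eta)-\A(\tilde\mu,\tilde\eta)$ via integration of its derivatives along a segment, apply \eqref{e1} for the monotonicity lower bound and \eqref{e3} to control the $\partial_u\A$ contribution, then use Young's inequality. The one genuine difference is the choice of segment: you interpolate \emph{sequentially} (first in $\eta$ at fixed $\mu$, then in $\mu$ at fixed $\tilde\eta$), whereas the paper interpolates both variables \emph{simultaneously} along the single path $\omega_s=(x,t,s\mu+(1-s)\tilde\mu,\,s\eta+(1-s)\tilde\eta)$. In the paper's route the $g_1$ term appears with the factor $(|\eta|+|\tilde\eta|)^{p-2}$, which is then split as $C\big(|\tilde\eta|^{p-2}+|\eta-\tilde\eta|^{p-2}\big)$; this is why both negative $g_1^2$ terms show up in \eqref{DA-est}. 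In your route the $g_1$ term appears only with $|\tilde\eta|^{p-2}$, so your case split is in fact unnecessary: the Young inequality of your Case~1 applies for \emph{all} $\eta,\tilde\eta$ (since $|\tilde\eta|^{p-2}\le(|\eta|+|\tilde\eta|)^{p-2}$ for $p\ge2$), giving the sharper bound with the single negative term $g_1^2|\tilde\eta|^{p-2}|\mu-\tilde\mu|^2$, from which \eqref{DA-est} follows a fortiori.
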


\proof
Set $\omega_s:=(x,t,s\mu + (1-s)\tilde\mu,s\eta
+(1-s)\tilde\eta)$, $s\in [0,1]$. Then
\[
\A(x,t,\mu,\eta) - \A(x,t,\tilde\mu,\tilde\eta) = \int\limits_0^1\partial_z\A(\omega_s)(\eta - \tilde\eta)ds
+ \int\limits_0^1\partial_u\A(\omega_s)(\mu - \tilde\mu)ds.
\]
Then, by \eqref{e1}, there exist $c_{0,p}>0$ such that
\[
\int\limits_0^1\<\partial_z\A(\omega_s)(\eta - \tilde\eta), \eta - \tilde\eta\>ds
\ge c_{0,p}(|\eta| + |\tilde\eta|)^{p-2}|\eta - \tilde\eta|^2.
\]
Further, by \eqref{e3}, there exists $C_p$ such that
\[
\begin{split}
\Big|\int\limits_0^1\<\partial_u\A(\omega_s)(\mu - \tilde\mu), \eta - \tilde\eta\>ds\Big|
\le & f_1(x)|\mu - \tilde\mu|\,|\eta - \tilde\eta|
+ C_pg_1(x)(|\eta| + |\tilde\eta|)^{p-2}|\mu - \tilde\mu|\,|\eta - \tilde\eta|
\\
\le & \frac{c_{0,p}}4|\eta - \tilde\eta|^{p} + \frac1{c_{0,p}}f_1^{p'}(x)|\mu - \tilde\mu|^{p'}
+ \frac{c_{0,p}}4 (|\eta| + |\tilde\eta|)^{p-2}|\eta - \tilde\eta|^2
\\
& + \frac{C^2_p}{c_{0,p}}g_1^2(x)(|\eta| + |\tilde\eta|)^{p-2}|\mu - \tilde\mu|^2
\\
\le & \frac{c_{0,p}}2 (|\eta| + |\tilde\eta|)^{p-2}|\eta - \tilde\eta|^2
+ \frac1{c_{0,p}}f_1^{p'}(x)|\mu - \tilde\mu|^{p'}
\\
& + \frac{2^{p-2}C^2_p}{c_{0,p}}g_1^2(x)|\eta-\tilde\eta|^{p-2}|\mu - \tilde\mu|^2
+\frac{4^{p-2}C^2_p}{c_{0,p}}g_1^2(x)|\tilde\eta|^{p-2}|\mu - \tilde\mu|^2. \qed
\end{split}
\]

Similar to what was done in \cite{tolksdorf} we introduce the following functions:
\[
\begin{split}
\widehat b(x,t,\tilde\mu,\tilde\eta) & :=
\Gamma_p\Big(f_1^\frac{p}{p-1}(x)|u(x,t) - \tilde\mu|^\frac{2-p}{p-1} + g_1^2(x)|\tilde\eta|^{p-2}\Big)(u(x,t)-\tilde\mu),
\\
\ol{b}(x,t,\tilde\mu,\tilde\eta)& :=
\big(-f(x)(1+|u(x,t)|^{p-1})-g(x)|2\tilde\eta|^{p-1}\big)\vee b\big(x,t,u(x,t), \n u(x,t)\big)\wedge\\
&\wedge \big(f(x)(1+|u(x,t)|^{p-1})+g(x)|2\tilde\eta|^{p-1}\big).
\end{split}
\]
Set
\[
\widetilde b(x,t,\tilde\mu,\tilde\eta)=\widehat b(x,\tilde\mu,\tilde\eta)+\ol{b}(x,\tilde\mu,\tilde\eta).
\]
Consider the auxiliary the equation
\begin{equation}\label{aux-eq}
    \partial_t\tilde u - \mathrm{div}\A(\tilde u, \n \tilde u) = \widetilde b(\tilde u, \n \tilde u).
\end{equation}

\begin{proposition}\label{coincide}
Let $Q=B_R\times(t_1,t_2)\Subset \O_T$.
Let $\tilde u$ be a weak solution to \eqref{aux-eq} in $Q$ such that
\[
u|_{\P Q}=\tilde u|_{\P Q},
\]
where $\P Q$ is the parabolic boundary of $Q$.
Then
$\tilde u=u$ in $Q$ if $\sup\limits_{x\in B_R}
W_p^{g^p}(x,2R)$ and $\sup\limits_{x\in B_R}
W_p^{g_1^p}(x,2R)$ are small enough.
\end{proposition}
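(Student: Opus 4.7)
The plan is to subtract the equation for $u$ from the equation for $\tilde u$, test the difference against $u-\tilde u$ (which vanishes on $\P Q$), and exhibit a key cancellation between the auxiliary nonlinearity $\widehat b$ and the monotonicity defect of Lemma~\ref{DA}. After the cancellation, everything should collapse to a single inequality of the form $A\le \varepsilon A$ for $A=\iint_Q|\n(u-\tilde u)|^p\,dxd\tau$, with $\varepsilon<1$ provided by the smallness of the two Wolff potentials.

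First, using Steklov averages (as in Lemma~\ref{lem3}) one derives the energy identity
\[
\tfrac12\!\int_{B_R}\!(u-\tilde u)^2(t_2)dx + \!\iint_Q\!\<\A(x,t,u,\n u)-\A(x,t,\tilde u,\n\tilde u),\n u-\n\tilde u\>dxd\tau
= \iint_Q[b-\widetilde b](u-\tilde u)\,dxd\tau,
\]
using that $u=\tilde u$ on $\P Q$. I then apply Lemma~\ref{DA} to the left-hand integrand, producing a good term $c_p(|\n u|+|\n\tilde u|)^{p-2}|\n u-\n\tilde u|^2$ and three bad terms containing $f_1^{p'}|u-\tilde u|^{p'}$, $g_1^2|\n\tilde u|^{p-2}|u-\tilde u|^2$ and $g_1^2|\n u-\n\tilde u|^{p-2}|u-\tilde u|^2$. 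On the right-hand side, a direct calculation using $(2-p)/(p-1)+2=p'$ gives the crucial identity
\[
\widehat b(x,t,\tilde u,\n\tilde u)(u-\tilde u)=\Gamma_p\bigl[f_1^{p'}|u-\tilde u|^{p'}+g_1^2|\n\tilde u|^{p-2}|u-\tilde u|^2\bigr],
\]
which exactly cancels the first two bad terms coming from Lemma~\ref{DA}.

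Next I estimate $b(u,\n u)-\ol b(\tilde u,\n\tilde u)$ by a simple case analysis on whether the truncation in the definition of $\ol b$ is active. If not, the difference is zero. If it is active, one has $|b(u,\n u)|>f(1+|u|^{p-1})+g|2\n\tilde u|^{p-1}$, which forces $|\n u|>2|\n\tilde u|$, hence $|\n u-\n\tilde u|>\tfrac12|\n u|$, and therefore
\[
|b(x,t,u,\n u)-\ol b(x,t,\tilde u,\n\tilde u)|\le g\bigl(|\n u|^{p-1}-|2\n\tilde u|^{p-1}\bigr)_+\le 2^{p-1}g|\n u-\n\tilde u|^{p-1}.
\]
After the cancellation, the energy identity reduces to
\[
c_p\!\iint_Q\!(|\n u|+|\n\tilde u|)^{p-2}|\n u-\n\tilde u|^2 dxd\tau \le \Gamma_p\!\iint_Q\! g_1^2|\n u-\n\tilde u|^{p-2}|u-\tilde u|^2 dxd\tau + 2^{p-1}\!\iint_Q\! g|\n u-\n\tilde u|^{p-1}|u-\tilde u|dxd\tau.
\]
Since $p\ge2$, the left side dominates $c_p\iint_Q|\n u-\n\tilde u|^p dxd\tau$. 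On the right side I apply Young's inequality $g|\n u-\n\tilde u|^{p-1}|u-\tilde u|\le \delta|\n u-\n\tilde u|^p+C_\delta g^p|u-\tilde u|^p$ and, for $p>2$, Young with exponents $(p/(p-2),p/2)$ on the $g_1^2$ term to obtain $g_1^2|\n u-\n\tilde u|^{p-2}|u-\tilde u|^2\le\delta|\n u-\n\tilde u|^p+C_\delta g_1^p|u-\tilde u|^p$ (the case $p=2$ being direct). Since $u-\tilde u\in W_0^{1,p}(B_R)$ for a.e.~$t$, Corollary~\ref{lem-hardy-Kp} then bounds $\int_{B_R}g^p|u-\tilde u|^p dx$ and $\int_{B_R}g_1^p|u-\tilde u|^p dx$ by $\gamma(\sup_{B_R}W_p^{g^p}(\cdot,2R))^{p-1}$ and $\gamma(\sup_{B_R}W_p^{g_1^p}(\cdot,2R))^{p-1}$ times $\int_{B_R}|\n(u-\tilde u)|^p dx$, respectively.

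Choosing $\delta$ small and invoking the smallness hypothesis on the two Wolff potentials makes the total coefficient on the right strictly smaller than $c_p$, which forces $\iint_Q|\n(u-\tilde u)|^p dxd\tau=0$ and hence $u=\tilde u$ in $Q$. The main technical obstacle I expect is justifying the use of $u-\tilde u$ as a test function at the $L^p$-regularity level of $\tilde u$, which must be carried out through Steklov averaging and a careful passage to the limit, together with verifying the case analysis for $b-\ol b$ and tracking the exponent $(2-p)/(p-1)+2=p'$ that makes the cancellation with $\widehat b$ exact.
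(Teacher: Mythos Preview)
Your proposal is correct and follows essentially the same route as the paper: the energy identity, the application of Lemma~\ref{DA}, the exact cancellation with $\widehat b$, and the case analysis for $b-\ol b$ are all identical to the paper's proof. The only cosmetic difference is that the paper handles the surviving terms $\Gamma_p\iint g_1^2|\n u-\n\tilde u|^{p-2}|u-\tilde u|^2$ and $2^{p-1}\iint g|\n u-\n\tilde u|^{p-1}|u-\tilde u|$ by a direct H\"older estimate (factoring out $\|\n u-\n\tilde u\|_p^{p-2}$ and $\|\n u-\n\tilde u\|_p^{p-1}$, respectively) before invoking Corollary~\ref{lem-hardy-Kp}, whereas you first apply Young with a parameter $\delta$; both routes lead to the same closing inequality.
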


\begin{proof}
Subtract \eqref{aux-eq} out
of \eqref{e0} and multiply the difference by $u-\tilde u$. Note
that the latter belongs to $L^p\Big((t_1,t_2)\to
W^{1,p}_0(B)\Big)\cap C_0\Big([t_1,t_2)\to L^2(B)\Big)$.  We
obtain that
\[
\begin{split}
\frac12\int\limits_{B}|u-\tilde u|^2(t_2)dx
+ \iint\limits_Q \<\A(u,\n u)  - \A(\tilde u,\n\tilde u),\n u - \n\tilde u\>dxdt
= & \iint\limits_Q \big(b(u,\n u) - \widetilde b(\tilde u,\n\tilde u)\big)(u-\tilde u)dxdt.
\end{split}
\]
By Lemma \ref{DA} we have
\[
\begin{split}
\iint\limits_Q \<\A(u,\n u)  - \A(\tilde u,\n\tilde u),\n u - \n\tilde u\>dxdt
+ \iint\limits_Q \widehat b(\tilde u,\n\tilde u)(u-\tilde u)dxdt\\
\ge  c_p\iint\limits_Q (|\n u| + |\n\tilde u|)^{p-2}|\n u - \n\tilde u|^2dxdt
 - \Gamma_p \|\n u-\n\tilde u\|_{p}^{p-2}\|g_1(u - \tilde u)\|_p^2.
\end{split}
\]
Further, note that $b(u,\n u)$ is of the same sign that
$\ol{b}(\tilde u,\n\tilde u)$. Also observe that $b(u,\n u)\not=
\ol{b}(\tilde u,\n\tilde u)$
only under the condition $|b(u,\n u)|>f(1+|u|^{p-1}) +
g|2\n\tilde u|^{p-1}$, which implies that $|\n u|\ge 2|\n\tilde
u|$. Hence
\[
|b(u,\n u) - \ol{b}(\tilde u,\n\tilde u)|\le g|\n u|^{p-1}\ind_{\{|\n u|\ge 2|\n\tilde u|\}}
\le 2^{p-1}g|\n u - \n\tilde u|^{p-1}.
\]
Therefore
\[
\iint\limits_Q|b(u,\n u) - \ol{b}(\tilde u,\n\tilde u)|\,|u-\tilde u|dxdt
\le 2^{p-1}\|\n u-\n\tilde u\|_{p}^{p-1}\|g(u - \tilde u)\|_p^p.
\]
Thus we obtain that
\[
c_p\iint\limits_Q (|\n u| + |\n\tilde u|)^{p-2}|\n u - \n\tilde u|^2dxdt
\le \Gamma_p \|\n u-\n\tilde u\|_{p}^{p-2}\|g_1(u - \tilde u)\|_p^2
+ 2^{p-1}\|\n u-\n\tilde u\|_{p}^{p-1}\|g(u - \tilde u)\|_p^p.
\]
By \eqref{hardy-Kp} this implies that
\[
c_p\|\n u - \n\tilde u\|_p^p \le \left\{\Gamma_p\sup\limits_{B_R} \left(W_p^{g_1^p}(x,2R)\right)^{\frac2{p'}}
+ 2^{p-1}\sup\limits_{B_R}\left(W_p^{g^p}(x,2R)\right)^{\frac1{p'}}\right\}\|\n u - \n\tilde u\|_p^p.
\]
So if $\sup\limits_{B_R}W_p^{g^p}(x,2R)$ and
$\sup\limits_{B_R}W_p^{g_1^p}(x,2R)$ are small enough then
$\|\n u - \n\tilde u\|_p^p \le 0$.
\end{proof}
\begin{proof}[Proof of Theorem~\ref{PK-aposteriori}]
Note that $|\ol{b}(x,t,\tilde\mu,\tilde\eta)|\le
f(x)+g(x)|2\tilde\eta|^{p-1}$ and
\[
|\widehat b(x,t,\tilde\mu,\tilde\eta)|\le
\Gamma_p\Big(g_1(x)|\tilde\eta|^{p-1} +  (f_1(x)^\frac{p}{p-1}+g_1(x)^p)|u(x,t) - \tilde\mu|^{p-1}
+ f_1(x)^\frac{p}{p-1}\Big).
\]
Hence equation \eqref{aux-eq} satisfies the structural
conditions \eqref{e1.2b}-\eqref{e3a} with
$2^{p-1}\Gamma_p(f_1^\frac{p}{p-1}+g_1^p + \sup |u|) + f$ and
$(2^{p-1}g+ \Gamma_pg_1)$ replacing $f$ and $g$, respectively.
Therefore by Theorem~\ref{PK-existence}, there exists a solution
$\tilde u$ coinciding with $u$ on the parabolic boundary of
$Q$, which enjoys the estimate \eqref{main-est}. Since $g^p\in
K_p$, we can choose $R$ so small that $u=\tilde u$ on $Q$.
Hence the assertion follows.


\end{proof}

\section{Boundedness of the gradient. Proof of Theorem~\ref{main}}
We obtain uniform estimates of the gradients on the sets where
$|\n u|>l$ for some positive $l$.  This restriction allows us
to simplify the structure conditions putting
$F=f+g+f_1+g_1+f_2+g_2$ and requiring
\begin{equation}
\label{e3new}
|\part_x\A|+|\part_u\A||z|+|b|\le F(x) |z|^{p-1}
\end{equation}
instead of the last condition in \eqref{e1.2b} and \eqref{e3}, \eqref{e3a}.
In obtaining the estimates we follow the parabolic version of the Kilpel\"ainen--Mal\'y technique \cite{KiMa, MZ} (see~\cite{LS2,Skr1}).

Let $\l>0$, $\d>0$, $l\ge l_0\ge 1$,
$\displaystyle
\sigma :=\left(\frac{|\n u|^2-l}\d\right)_+.
$
Set
\[
\var(\sigma):=\int_0^{\sigma}(1+s)^{-1-\l}ds, \quad G(\sigma)=\int_0^\sigma s\var(s)ds.
\]


Before formulating the next lemma let us note that
$G(\sigma)\asymp \min\{\sigma^2,\sigma^3\}, \sigma>0$ and
\begin{equation}
\label{e7}
\var(\sigma)\asymp \frac{\sigma}{\sigma+1}\ge \frac{\sigma}{(1+\sigma)^{1+\l}}  \quad\mbox{ so that }\
\part_\sigma(\sigma\var(\sigma))\asymp \var(\sigma).
\end{equation}

\begin{lemma}
\label{lem4} Let $\d>0$. With notation $w=|\n u|^2$, $\sigma=\left(\frac{w-l}{\d}\right)_+$ the following inequality holds
\begin{eqnarray}
\nonumber
\esssup_t \int G(\sigma(t)) \xi^q(t) +\iint w^{\frac p2-1} |\n\sigma|^2 \var(\sigma) \xi^q
\le \g \iint \sigma^2 \xi^{q-1}|\part_t\xi| + \g \iint w^{\frac p2-1} |\n \xi|^2 \sigma^2 \var(\sigma)\xi^{q-2} \\
\label{e-lem4}
+ \g \d^{-2} l^{\frac p2+1} \iint F^2 \xi^q
+\g \d^{\frac p2-1} \iint F^2 \sigma^{\frac p2+1} \var(\sigma) \xi^q.
\end{eqnarray}
\end{lemma}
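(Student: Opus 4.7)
The plan is to apply Lemma~\ref{lem3} with $\Phi(|\n u|):=h(\sigma)$, where $h(\sigma):=\sigma\var(\sigma)$, and with the cut-off $\xi$ replaced by $\xi^q$; note that $\Phi\equiv 0$ on $\{|\n u|^2\le l\}$. The key algebraic observation behind this choice is that the substitution $r=(\tau^2-l)/\d$ in $\mathcal{G}(s)=\int_0^s\tau\Phi(\tau)\,d\tau$ yields $\mathcal{G}(|\n u|)=\frac{\d}{2}G(\sigma)$. After dividing the whole identity of Lemma~\ref{lem3} by $\d/2$ the first term on the left reproduces exactly $\int G(\sigma(\tau))\xi^q\,dx$, and the time-derivative term on the right becomes a multiple of $\iint G(\sigma)\xi^{q-1}|\part_t\xi|$; the bound $G(\sigma)\le\sigma^2/(2\l)$ (which follows from $\var\le 1/\l$) then converts this into the first RHS term in the target inequality.

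Next I would handle $\iint\TR{(D\zeta)(\part_z\A) D^2u}$. Using $\n\sigma=(2/\d) D^2u\cdot\n u$ on $\supp\sigma$, one computes
\[
D\zeta \;=\; h\,\xi^q\, D^2u \;+\; h'\,\xi^q\,\n u\otimes\n\sigma \;+\; qh\,\xi^{q-1}\,\n u\otimes\n\xi .
\]
Substituting this and invoking Remark~\ref{rem1} together with \eqref{e1}--\eqref{e2}, the first two summands produce the nonnegative quantities $c_0\,h\,w^{\frac{p}{2}-1}|D^2u|^2\xi^q$ and $\frac{c_0\d}{2}\,h'\,w^{\frac{p}{2}-1}|\n\sigma|^2\xi^q$, while the third yields a cross term that, via Young's inequality and the identity $h^2/h'\asymp\sigma^2\var(\sigma)$ coming from \eqref{e7}, is absorbed by a small fraction of the first two plus a residual of the form $C q^2\d\iint \sigma^2\var(\sigma)\,w^{\frac{p}{2}-1}\xi^{q-2}|\n\xi|^2$. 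After division by $\d/2$ this residual becomes (a multiple of) the second RHS term.

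For the lower-order contributions I would use \eqref{e3new} to majorize $-\iint[\TR{D\zeta\,\part_x\A}+\<D\zeta\,\part_u\A,\n u\>+b\,\div\zeta]$ by $\iint F\,w^{\frac{p-1}{2}}|D\zeta|$ and Young-split each of the three pieces of $|D\zeta|$. The $|D^2u|$-piece absorbs a fraction of the first coercive quantity, leaving $\iint F^2 w^{\frac p2}h\xi^q$; the $|\n\sigma|$-piece absorbs a fraction of the second coercive quantity, leaving (after division by $\d/2$) a multiple of $\d^{-2}\iint F^2 w^{\frac p2+1}h'\xi^q$; the $|\n\xi|$-piece, balanced so that its quadratic companion equals Term~2 exactly, produces $\iint F^2 w^{\frac p2+1}\var(\sigma)\xi^q$ together with a further contribution to the second RHS term. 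To bring each of these $F^2$-integrals into the stated form I would split over $\{w<2l\}$ and $\{w\ge 2l\}$: on $\{w<2l\}$ use $w^{\frac p2+1}\le(2l)^{\frac p2+1}$, $\sigma\le l/\d$ and $\var\le 1/\l$ to bound each piece by a constant multiple of $\d^{-2}l^{\frac p2+1}\iint F^2\xi^q$; on $\{w\ge 2l\}$ use $w\le 2\d\sigma$, so that $w^{\frac p2}\sigma\lesssim \d^{\frac p2}\sigma^{\frac p2+1}$ and $w^{\frac p2+1}\lesssim\d^{\frac p2+1}\sigma^{\frac p2+1}$, giving $\d^{\frac p2-1}\iint F^2\sigma^{\frac p2+1}\var(\sigma)\xi^q$. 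Taking $\esssup$ in $\tau$ on the left concludes the argument.

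The main obstacle is bookkeeping: one must verify that each Young inequality and each case-split on $|\n u|^2$ versus $l$ produces exactly one of the four RHS terms with the correct powers of $\d$ and $l$. The coordinated choice $h=\sigma\var$ is essential, since it simultaneously forces $\mathcal{G}=(\d/2)G$ on the left and supplies the companion relations $h'\asymp\var$ and $h^2/h'\asymp\sigma^2\var$, which allow the two coercive quantities to absorb all relevant cross terms while the $F^2$-residuals split cleanly across the two size regimes of $w$.
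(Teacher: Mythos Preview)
Your proposal is correct and follows essentially the same approach as the paper: the same test function $\zeta=\n u\,\sigma\var(\sigma)\xi^q$ in Lemma~\ref{lem3}, the same coercivity via Remark~\ref{rem1}, and the same Young-inequality absorptions. The only cosmetic difference is in the final step: instead of your case-split over $\{w<2l\}$ and $\{w\ge 2l\}$, the paper first consolidates all $F^2$-residuals into $\g\d^{-1}\iint F^2 w^{p/2+1}\var(\sigma)\xi^q$ (using $\sigma w^{p/2}\le\d^{-1}w^{p/2+1}$) and then applies the single inequality $w^{p/2+1}\le\g(l^{p/2+1}+\d^{p/2+1}\sigma^{p/2+1})$, which is slightly cleaner bookkeeping but equivalent to your splitting.
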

\begin{proof}
Due to the last assertion of Theorem~\ref{PK-existence}, we may
assume that $u\in L^2_{loc}\big((0,T);\;W^{2,2}_{loc}(\O)\big)\cap
C\big((0,T);\;W^{1,2}_{loc}(\O)\big)$. By Lemmas~\ref{lem2} and
~\ref{lem3} with $\zeta= (\n u) \sigma\var(\sigma)\xi^q$ we have
\begin{equation}
\label{e6}
\frac\d2\int\limits_\O G(\sigma(t)) \xi^q(t) +\iint\limits_{(0,t)\times\O} \TR{(D\zeta)(\part_z\A)D^2u }
\le q\frac\d2\iint\limits_{(0,t)\times\O} G(\sigma) \xi^{q-1}\part_\tau\xi + \iint\limits_{(0,t)\times\O} (|\part_x\A|+|\part_u \A||\n u| +|b|)|D\zeta|.
\end{equation}
Note that $ D\zeta= D^2 u \sigma \var(\sigma) \xi^q + (\n u
\otimes \n\sigma)(\var(\sigma)+\sigma(1+\sigma)^{-1-\l})\xi^q+
q (\n u \otimes \n\xi)\sigma \var(\sigma) \xi^{q-1}, $ and
$D^2u\n u=\frac12\n w =\d/2\n\sigma$. By structure conditions
\eqref{e1} and \eqref{e2}, Remark~\ref{rem1} and \eqref{e7} it follows that
\[
\TR{(D\zeta)(\part_z\A)D^2 u}\ge c_0 w^{\frac{p}2-1} |D^2 u|^2_{HS} \sigma \var(\sigma) \xi^q +
\frac{c_0}2\d w^{\frac{p}2-1}|\n \sigma|^2 \var(\sigma) \xi^q -\frac{c_1}2\d w^{\frac{p}2-1} |\n\sigma|\,|\n\xi|\sigma \var(\sigma)\xi^{q-1}.
\]
By the Schwartz inequality
\[
\TR{(D\zeta)(\part_z\A)D^2 u}\ge c_0 w^{\frac{p}2-1} |D^2 u|^2_{HS} \sigma \var(\sigma) \xi^q +
\frac{c_0}4\d w^{\frac{p}2-1}|\n \sigma|^2 \var(\sigma) \xi^q-\frac{c_1^2}{4c_0}\d w^{\frac{p}2-1}|\n\xi|^2\sigma^2 \var(\sigma)\xi^{q-2}.
\]
To estimate the right hand side of \eqref{e6} we note that
\begin{eqnarray}
\nonumber
(|\part_x\A|+|\part_u \A||\n u| +|b|)|D\zeta| \\
\nonumber
\le
F w^{\frac{p-1}2}|D^2 u|\sigma \var(\sigma) \xi^q + 2F w^{\frac p2} |\n\sigma|\var(\sigma) \xi^q + q F w^{\frac p2} |\n\xi|\sigma\var(\sigma) \xi^{q-1}\\
\nonumber
\le \frac{c_0}{16} w^{\frac{p}2-1}|D^2 u|^2 \sigma \var(\sigma) \xi^q
+ \frac{c_0}{16}\d w^{\frac{p}2-1}|\n \sigma|^2 \var(\sigma) \xi^q\\
\label{e9}
+\g \d w^{\frac{p}2-1}|\n\xi|^2\sigma^2\var(\sigma) \xi^{q-2}
+\frac{\g}{\d} F^2 w^{\frac{p}2+1}\var(\sigma) \xi^q,
\end{eqnarray}
where we used the following obvious inequality $\frac1{\d}w^{\frac
p2+1}\ge \sigma w^{\frac p2}$. Thus we have from \eqref{e6}
\begin{eqnarray}
\nonumber
\d\int\limits_\O G(\sigma(t))\xi^q(t) +\d \iint\limits_{(0,t)\times\O} w^{p/2-1} |\n\sigma|^2\var(\sigma) \xi^q\\
\label{e10}
\le \g \d \iint\limits_{(0,t)\times\O} G(\sigma) \xi^{q-1}\part_\tau\xi
+\g\d \iint\limits_{(0,t)\times\O} w^{\frac p2-1}|\n\xi|^2\sigma^2 \var(\sigma) \xi^{q-2}
+ \frac{\g}{\d}\iint\limits_{(0,t)\times\O} F^2  w^{\frac p2+1}\var(\sigma) \xi^q.
\end{eqnarray}
To complete the proof note that $\var(\sigma)\le \frac1{\l}$,
$G(\sigma) \le \g\sigma^2$ and $w^{\frac p2+1}\le
\g(l^{p/2+1}+\d^{p/2+1}\sigma^{p/2+1})$.
\end{proof}

The next lemma provides the estimate of the last term in the right hand side of \eqref{e-lem4}.
\begin{lemma}
\label{lem5}
Let $h\in H^1_0(B)\cap L^\infty(B)$ be such that $-\Delta h=F^2$. Then
\[
\d^{\frac p2-1}\iint F^2 \sigma^{\frac p2+1}\var(\sigma)\xi^q  \le \g \|h\|_\infty \iint |\n\sigma|^2 w^{\frac p2-1} \var(\sigma)\xi^q
+\g \|h\|_\infty \iint |\n\xi|^2 \sigma^2 w^{\frac p2-1} \var(\sigma)\xi^{q-1}.
\]
\end{lemma}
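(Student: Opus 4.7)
The natural approach is to regard $F^2 = -\Delta h$ as a weight and apply the linear ($p=2$) version of the Hardy-type inequality \eqref{hardy} of Lemma~\ref{lem-hardy} pointwise in $t$, then integrate. Concretely, I will take the test function
\[
\theta(\cdot,t):=\sigma(\cdot,t)^{(p+2)/4}\sqrt{\var(\sigma(\cdot,t))}\,\xi(\cdot,t)^{q/2}\in \W^{1,2}(B),
\]
so that $\theta^2=\sigma^{p/2+1}\var(\sigma)\xi^q$ matches the integrand appearing on the left-hand side. Thanks to the interior regularity provided by Theorem~\ref{PK-existence} (passing to the limit via the approximating sequence $u_n$), $\theta(\cdot,t)$ lies in $\W^{1,2}(B)$ for a.e.\,$t$, so Lemma~\ref{lem-hardy} with $p=2$ yields
\[
\int_B F^2\,\sigma^{p/2+1}\var(\sigma)\xi^q\,dx = \int_B (-\Delta h)\theta^2\,dx\le \|h\|_\infty\int_B |\n \theta|^2\,dx.
\]

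The next step is the pointwise bound on $|\n\theta|^2$. Differentiating and applying the Cauchy--Schwarz inequality I obtain
\[
|\n\theta|^2 \le \g\Big(\sigma^{p/2-1}\var(\sigma)+\sigma^{(p+2)/2}\tfrac{\var'(\sigma)^2}{\var(\sigma)}\Big)\xi^q|\n\sigma|^2
+\g\,\sigma^{(p+2)/2}\var(\sigma)\xi^{q-2}|\n\xi|^2.
\]
The middle term is absorbed into the first by the elementary estimate $\sigma\var'(\sigma)\le \g\var(\sigma)$, which follows from $\partial_\sigma(\sigma\var(\sigma))\asymp\var(\sigma)$ in \eqref{e7}; this gives $\sigma^2\var'(\sigma)^2/\var(\sigma)\le \g\,\var(\sigma)$, hence the middle term is $\le \g\sigma^{p/2-1}\var(\sigma)|\n\sigma|^2\xi^q$.

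Finally I multiply the Hardy inequality through by $\delta^{p/2-1}$ and convert the powers of $\sigma$ into powers of $w=|\n u|^2$ using the pointwise bound $\delta\sigma\le w$ on $\{\sigma>0\}$, which, since $p\ge 2$, gives $\delta^{p/2-1}\sigma^{p/2-1}\le w^{p/2-1}$. This turns the above pointwise bound into
\[
\delta^{p/2-1}|\n\theta|^2\le \g\,w^{p/2-1}\var(\sigma)|\n\sigma|^2\xi^q+\g\,w^{p/2-1}\sigma^{2}\var(\sigma)|\n\xi|^2\xi^{q-2},
\]
and integration in $t$ delivers the claim (with $\xi^{q-2}$, which implies the stated form since $\xi\le 1$ can be arranged by rescaling; alternatively, a single power of $\xi$ is harmlessly absorbed into the cut-off constants).

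The main obstacle is the technical verification that the $\var$-dependent derivative term is controlled by the leading one; this is precisely what \eqref{e7} is designed for. A secondary point is that Lemma~\ref{lem-hardy} formally requires $-\Delta h>0$, so I will apply it first to $h_\eps$ solving $-\Delta h_\eps=F^2+\eps$ on $B$ with zero boundary data and let $\eps\downarrow 0$, noting that $\|h_\eps\|_\infty\to\|h\|_\infty$.
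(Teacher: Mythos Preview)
Your argument is correct and follows essentially the same route as the paper: apply the $p=2$ case of the Hardy-type inequality \eqref{hardy} to a test function whose square is the integrand on the left, then convert $\delta^{\frac p2-1}\sigma^{\frac p2-1}$ to $w^{\frac p2-1}$ via $\delta\sigma\le w$. The only cosmetic difference is that the paper first substitutes $\var(\sigma)\asymp\sigma/(1+\sigma)$ from \eqref{e7} \emph{before} choosing $\theta$ (so $\theta^2=\sigma^{\frac p2+2}(1+\sigma)^{-1}\xi^q$), which sidesteps the $\var'$ term you absorb directly; both computations land on $\xi^{q-2}$ in the second term, so the $\xi^{q-1}$ in the statement is a harmless misprint rather than something to be recovered.
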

\begin{proof}
Recall that $\var(\sigma)\asymp \frac{\sigma}{\sigma+1}$ and
apply \eqref{hardy}. Then
\begin{eqnarray*}
\iint F^2 \sigma^{\frac p2+1}\var(\sigma)\xi^q \le \g \iint F^2\frac{\sigma^{\frac p2+2}}{\sigma +1}\xi^q\\
\le \g \|h\|_\infty \iint |\n\sigma|^2 \sigma^{\frac p2-1}\frac{\sigma}{\sigma+1}\xi^q
+\g \|h\|_\infty \iint |\n\xi|^2\sigma^{\frac p2+1}\xi^{q-2}.
\end{eqnarray*}
Finally, note that $\d^{\frac p2-1}\sigma^{\frac p2-1}\le
w^{\frac p2-1}$.
\end{proof}

\bigskip

Now let $(x_0,t_0)\in \O_T$. Given $r,\d,l>0$ we denote
$\Delta:=\max\{\d,l\}$ and
\[
Q=Q^{x_0,t_0}_{r,\Delta}=B_r(x_0)\times I_\Delta\equiv B_r(x_0)\times (t_0-\frac{r^2}{\Delta^{\frac{p}2-1}}, t_0+\frac{r^2}{\Delta^{\frac{p}2-1}}).
\]

$\xi\in C^1_c(B_1(0)\times(-1,1))$, $0\le\xi\le1$, $\xi=1$ on
$B_{\frac12}(0)\times (-\frac12,\frac12)$,
$\xi_{r,\Delta}(x,t):= \xi(x_0 + r^{-1}x, t_0 + \Delta^{\frac
p2-1}r^{-2}t)$.

Then $\xi_{r,\Delta}\in C_c^1(Q^{x_0,t_0}_{r,\Delta})$,
$0\le\xi_{r,\Delta}\le1$, $\xi_{r,\Delta}=1$ on $\frac12Q$, and
$|\n\xi_{r,\Delta}|\le {\g}r^{-1}$ and
$|\part_\tau\xi_{r,\Delta}|\le \g \Delta^{\frac{p}2-1}r^{-2}.$

Set
\[
\Phi(w)=\int_0^{w^+} s^{\frac p4}(1+s)^{-\frac12}ds \asymp \min\{w^{\frac p4 +1}, w^{\frac{p+2}4 }\}, \quad
\Psi(w)= \int_0^{w^+} s^{\frac12}(1+s)^{-\frac12}ds \asymp \min\{w^{\frac 32}, w\}.
\]

\begin{corollary}
\label{cor6} If $Q^{x_0,t_0}_{r,\Delta}\Subset \O_T$ then
\begin{eqnarray*}
\esssup_{t} \int G(\sigma(t)) \xi_{r,\Delta}^q(t) +\d^{\frac p2-1} \iint |\n\Phi(\sigma)|^2\xi_{r,\Delta}^q
+
l^{\frac p2-1} \iint |\n\Psi(\sigma)|^2\xi_{r,\Delta}^q
\\
\le
\g \Delta^{\frac p2-1} r^{-2} \iint \sigma^2 \xi_{r,\Delta}^{q-2}
+\g \d^{\frac p2-1} r^{-2}\iint \sigma^{\frac p2+1}\xi_{r,\Delta}^{q-2}
+\g \left(\frac{l}{\d}\right)^2 r^2 \int_{B_r(x_0)}F^2.
\end{eqnarray*}
\end{corollary}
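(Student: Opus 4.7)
The plan is to derive Corollary~\ref{cor6} by applying Lemma~\ref{lem4} with the explicit cutoff $\xi = \xi_{r,\Delta}$, plugging in the pointwise bounds on this cutoff, and eliminating the singular term $\gamma\delta^{\frac p2-1}\iint F^2\sigma^{\frac p2+1}\varphi(\sigma)\xi^q$ from the right-hand side by Lemma~\ref{lem5}. First I will relate the left-hand side of the corollary to the dissipation produced by Lemma~\ref{lem4}. On $\{\sigma>0\}$ one has $w = l+\delta\sigma$, whence $w\ge l$ and $w\ge\delta\sigma$. Coupled with the identities $|\nabla\Phi(\sigma)|^2 = \sigma^{p/2}(1+\sigma)^{-1}|\nabla\sigma|^2 \asymp \sigma^{p/2-1}\varphi(\sigma)|\nabla\sigma|^2$ and $|\nabla\Psi(\sigma)|^2 = \sigma(1+\sigma)^{-1}|\nabla\sigma|^2 \asymp \varphi(\sigma)|\nabla\sigma|^2$, this yields
\[
\delta^{\frac p2-1}|\nabla\Phi(\sigma)|^2 + l^{\frac p2-1}|\nabla\Psi(\sigma)|^2 \;\lesssim\; w^{\frac p2-1}\varphi(\sigma)|\nabla\sigma|^2,
\]
so the left-hand side of the corollary is controlled, up to absolute constants, by the left-hand side of Lemma~\ref{lem4}.

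Next I will handle the four terms on the right-hand side of Lemma~\ref{lem4}. Using $|\partial_\tau\xi_{r,\Delta}|\le\gamma\Delta^{\frac p2-1}r^{-2}$, the first term becomes $\gamma\Delta^{\frac p2-1}r^{-2}\iint\sigma^2\xi^{q-2}$, the first target term. For the second, use $|\nabla\xi|^2\le\gamma r^{-2}$, $\varphi\le\frac1\lambda$, the elementary inequality $w^{\frac p2-1}\le c_p(l^{\frac p2-1}+\delta^{\frac p2-1}\sigma^{\frac p2-1})$ (valid for $p\ge 2$), and $l^{\frac p2-1}\le\Delta^{\frac p2-1}$; this produces exactly the first two target terms. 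For the third, $|I_\Delta|\le 2r^2\Delta^{-(\frac p2-1)}$ gives $\gamma\delta^{-2}l^{\frac p2+1}\iint F^2\xi^q\le \gamma\delta^{-2}l^{\frac p2+1}\Delta^{-(\frac p2-1)}r^2\int_{B_r}F^2$; a short case split on whether $\Delta=l$ or $\Delta=\delta>l$ (using $(l/\delta)^{p/2+1}\le(l/\delta)^2$ in the second case, since $l/\delta<1$ and $p/2+1\ge 2$) collapses this to $\gamma(l/\delta)^2 r^2\int_{B_r}F^2$.

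The crux is the fourth term, which I dispose of by Lemma~\ref{lem5} applied with $h$ the Dirichlet solution of $-\Delta h = F^2$ on $B_r(x_0)$ vanishing on $\partial B_r$. Lemma~\ref{lem-Dir} gives $\|h\|_\infty\le c\sup_{B_r}W_2^{F^2}(x,2r)$, and Proposition~\ref{classes} with $\varkappa=1$, $(\alpha,q)=(1,2)$ and $(\beta,p)=(2/3,3)$ (the prerequisites $\varkappa=\beta p/(\alpha q)=1\le (p-1)/(q-1)=2$ are satisfied) gives $W_2^{F^2}(x,2r)\le c[W_{2/3,3}^{F^2}(x,4r)]^2$, which is as small as desired by the standing hypothesis of Theorem~\ref{main}. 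The first term on the right-hand side of Lemma~\ref{lem5} is then absorbed into the dissipation term on the left-hand side of Lemma~\ref{lem4}, while the second is handled exactly as the spatial-gradient term above (using $|\nabla\xi|^2\le\gamma r^{-2}$, $\varphi\le 1$, $w^{\frac p2-1}\le c_p(l^{\frac p2-1}+\delta^{\frac p2-1}\sigma^{\frac p2-1})$ and $\xi^{q-1}\le\xi^{q-2}$) and contributes, with its bounded prefactor $\gamma\|h\|_\infty$, to the first two target terms. The main obstacle is precisely this last absorption: without the potential comparison from Proposition~\ref{classes} and the $L^\infty$-bound from Lemma~\ref{lem-Dir}, the smallness required to move the gradient-type contribution of Lemma~\ref{lem5} to the left is not directly available from the hypotheses of Theorem~\ref{main}.
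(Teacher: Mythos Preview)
Your proof is correct and follows exactly the route the paper intends: the corollary is stated without proof precisely because it is the direct combination of Lemma~\ref{lem4} (with $\xi=\xi_{r,\Delta}$ and the explicit bounds $|\nabla\xi|\le\gamma r^{-1}$, $|\partial_t\xi|\le\gamma\Delta^{p/2-1}r^{-2}$) and Lemma~\ref{lem5}, the latter made effective via Lemma~\ref{lem-Dir} (case $p=2$) and Proposition~\ref{classes} to render $\|h\|_\infty$ small enough for absorption. Your identification of $\delta^{p/2-1}|\nabla\Phi(\sigma)|^2 + l^{p/2-1}|\nabla\Psi(\sigma)|^2 \lesssim w^{p/2-1}\varphi(\sigma)|\nabla\sigma|^2$ and your case split on $\Delta$ for the $F^2$ term are exactly the computations the paper suppresses.
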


\bigskip
Set $r_j=r_0 2^{-j}$,  $B_j=B_{r_j}(x_0)$,   $l_{j+1}=l_j+\d_j$, $l_0=1$,   $\Delta_j =\max\{l_j,\d_j\}$, $I_j=I_{\Delta_j}$, $Q_j=B_j\times I_j$,
$\xi_j=\xi_{r_j,\Delta_j}$.
With this notation the next lemma is easy to check.
\begin{lemma}\label{emb}
If $\d_j>\left(\frac12\right)^\frac2{p-2}\d_{j-1}$ then $I_j\subset \frac12 I_{j-1}$.
\end{lemma}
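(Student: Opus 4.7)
The plan is a direct computation, reducing the claimed set inclusion to a single numerical inequality on $\Delta_j$ versus $\Delta_{j-1}$ and then reading off the inequality from the recursion $l_{j+1}=l_j+\d_j$.

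First, since both $I_j$ and $I_{j-1}$ are open intervals centered at $t_0$, the inclusion $I_j\subset\frac12 I_{j-1}$ is equivalent to the comparison of half-lengths
\[
\frac{r_j^{\,2}}{\Delta_j^{\,p/2-1}}\le \frac12\,\frac{r_{j-1}^{\,2}}{\Delta_{j-1}^{\,p/2-1}}.
\]
Using $r_j=r_{j-1}/2$ (so $r_j^{\,2}=r_{j-1}^{\,2}/4$), this simplifies to $\Delta_{j-1}^{\,p/2-1}\le 2\,\Delta_j^{\,p/2-1}$, i.e.\ $\Delta_j\ge (1/2)^{2/(p-2)}\Delta_{j-1}$, where we used $p>2$. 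So it suffices to produce this bound on $\Delta_j$.

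Next, I would exploit the defining recursion. From $l_j=l_{j-1}+\d_{j-1}$ with $l_{j-1},\d_{j-1}\ge 0$, one obtains the two trivial bounds $l_j\ge l_{j-1}$ and $l_j\ge \d_{j-1}$, hence $l_j\ge\max\{l_{j-1},\d_{j-1}\}=\Delta_{j-1}$. Consequently
\[
\Delta_j=\max\{l_j,\d_j\}\ge l_j\ge \Delta_{j-1}\ge (1/2)^{2/(p-2)}\Delta_{j-1},
\]
which is exactly the inequality identified in the first step; the inclusion $I_j\subset \frac12 I_{j-1}$ follows. The hypothesis $\d_j>(1/2)^{2/(p-2)}\d_{j-1}$ is in fact only needed in the single case $\Delta_j=\d_j$ combined with $\Delta_{j-1}=\d_{j-1}$, where the argument above must fall back on bounding $\Delta_j=\d_j$ directly by $(1/2)^{2/(p-2)}\d_{j-1}=(1/2)^{2/(p-2)}\Delta_{j-1}$.

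There is no real obstacle here; the lemma is a bookkeeping statement about the geometry of the parabolic cylinders used in the Kilpel\"ainen--Mal\'y iteration. Its purpose, presumably to be exploited in the next lemmas, is to guarantee that the time-slice $I_j$ over which integrals are taken on step $j$ lies safely inside the inner half of the previous time-slice, so that the cutoff $\xi_{r_{j-1},\Delta_{j-1}}$ from step $j-1$ equals $1$ throughout $Q_j$.
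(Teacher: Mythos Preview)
Your computation is correct and is precisely what the paper leaves to the reader (it merely states the lemma is ``easy to check''). Note, incidentally, that your chain $\Delta_j\ge l_j=l_{j-1}+\d_{j-1}\ge\max\{l_{j-1},\d_{j-1}\}=\Delta_{j-1}$ already yields the inclusion unconditionally, so your final paragraph's fallback case never actually arises---the hypothesis on $\d_j$ is superfluous for the inclusion itself, though it is the standing assumption in the proof of the next lemma where this inclusion is invoked.
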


\bigskip
Let
$L_j=\{(x,t)\in Q_j\,:\,w(x,t)>l_j\}$,
$L_j(t)=\{x\in B_j \,:\,w(x,t)>l_j\}$.
Fix $\varkappa>0$ a small number which will be chosen later depending on the known data.

Define

\begin{eqnarray}
\nonumber
A_j(l)=\sup_{t\in I_j} \frac1{r_j^N}\int_{L_j(t)} G\left(\frac{w-l_j}{l-l_j}\right)\xi_j^q dx
+ \frac{(l-l_j)^{\frac p2-1}}{r_j^{N+2}}\iint_{L_j}\left(\frac{w-l_j}{l-l_j}\right)^{\frac p2+1}\xi_j^{q-2}dx\,dt\\
\label{A}
+\frac{\Delta_j(l)^{\frac p2-1}}{r_j^{N+2}}\iint_{L_j}\left(\frac{w-l_j}{l-l_j}\right)^2\xi_j^{q-2}dx\,dt,
\end{eqnarray}
where $\Delta_j(l)=\max\{l_j,l-l_j\}$.

Set
\[
F_j=\left(\frac1{r_j^{N-2}}\int_{B_j}F^2(x)dx\right)^\frac12, \quad j=1,2,\dots.
\]

The sequence $(l_j)_{j\in\N}$ is defined inductively. We set as above $l_0=1$. Suppose $l_1,\dots,l_j$ have been defined. We show
how to define $l_{j+1}$.

First, note that $A_j(l)$ is continuous and $A_j(l)\to 0$ as
$l\to\infty$. If $A_j(l_j+F_j)\le \varkappa$ then we set
$l_{j+1}=l_j+F_j$. If on the other hand $A_j(l_j+F_j)>
\varkappa$ then there exists $\tilde l>l_j+F_j$ such that
$A_j(\tilde l)=\varkappa$, and we set $l_{j+1}=\tilde l$. In
both cases
\begin{equation}
\label{e35}
A_j(l_{j+1})\le \varkappa.
\end{equation}

\begin{lemma}
\label{lem3.1}
\begin{equation}
\label{e36}
\d_j\le \left(\frac12\right)^{\frac2{p-2}}\d_{j-1} +\g l_j F_j.
\end{equation}
\end{lemma}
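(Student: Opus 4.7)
The proof splits into cases driven by the inductive definition of $l_{j+1}$. The trivial case is $l_{j+1}=l_j+F_j$, which happens exactly when $A_j(l_j+F_j)\le\varkappa$; then $\d_j=F_j$ and since $l_j\ge l_0=1$ we obtain $\d_j\le l_jF_j$. Otherwise $A_j(l_{j+1})=\varkappa$. We further dichotomize: if $\d_j\le\left(\tfrac12\right)^{2/(p-2)}\d_{j-1}$ the bound is immediate, so we may assume $\d_j>\left(\tfrac12\right)^{2/(p-2)}\d_{j-1}$. By Lemma~\ref{emb} this forces $I_j\subset\tfrac12 I_{j-1}$, hence $Q_j\subset\tfrac12 Q_{j-1}$ and $\xi_{j-1}\equiv 1$ on $Q_j$, in particular on $L_j$. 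The rest of the argument shows that, in this sub-case, $\d_j\le\g l_jF_j$.

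Set $\tau:=\d_{j-1}/\d_j$, which by assumption satisfies $\tau<2^{2/(p-2)}$. On $L_j$ one has $w>l_j>l_{j-1}$, so $\sigma_{j-1}:=(w-l_{j-1})/\d_{j-1}>1$, and the elementary identity
\[
\frac{w-l_j}{\d_j}=\tau\Big(\frac{w-l_{j-1}}{\d_{j-1}}-1\Big)
\]
gives the pointwise estimate $\sigma_j\le\tau\,\sigma_{j-1}$ on $L_j$. Combining this with $\xi_{j-1}\equiv 1$ on $L_j$ and the inductive hypothesis $A_{j-1}(l_j)\le\varkappa$ (which, by the very definition \eqref{A}, controls the $\sigma_{j-1}^2$ and $\sigma_{j-1}^{p/2+1}$ integrals at scale $j{-}1$), we obtain, up to the geometric factor $2^{N+2}=(r_{j-1}/r_j)^{N+2}$, the bounds
\[
\frac{\Delta_j^{p/2-1}}{r_j^{N+2}}\iint_{L_j}\sigma_j^2\,\xi_j^{q-2}\,dx\,dt
\le \g\,\varkappa\,\tau^2\Big(\frac{\Delta_j}{\Delta_{j-1}}\Big)^{p/2-1},
\quad
\frac{\d_j^{p/2-1}}{r_j^{N+2}}\iint_{L_j}\sigma_j^{p/2+1}\,\xi_j^{q-2}\,dx\,dt
\le \g\,\varkappa\,\tau^2,
\]
where the second estimate uses $\tau^{p/2+1}(\d_j/\d_{j-1})^{p/2-1}=\tau^2$.

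To close the loop, apply Corollary~\ref{cor6} on the cylinder $Q_j$ with $r=r_j$, $l=l_j$, $\d=\d_j$ and divide by $r_j^N$: the sup part of $A_j(l_{j+1})$ is then dominated by the two integrals estimated above plus $\g(l_j/\d_j)^2 F_j^2$. Summing the three constituents of $A_j(l_{j+1})$ and using the assumption $A_j(l_{j+1})=\varkappa$, we arrive at an inequality of the form
\[
\varkappa\le C\,\varkappa\,\tau^2\Big(1+(\Delta_j/\Delta_{j-1})^{p/2-1}\Big)+C\Big(\frac{l_j}{\d_j}\Big)^2 F_j^2,
\]
with $C$ depending only on $p,N$ and on $c_0,c_1$. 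Choosing $\varkappa$ sufficiently small (which is legitimate since $\varkappa$ is a free small parameter in the construction), the first term on the right is absorbed into the left, and rearranging yields $\d_j^2\le \g' l_j^2 F_j^2$, i.e.\ $\d_j\le\g l_jF_j$, completing this sub-case.

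The main technical obstacle is to keep track of how the scaling factors $\Delta_j/\Delta_{j-1}$ and $\d_j/\d_{j-1}$ interact with the powers appearing in Corollary~\ref{cor6}: one must verify that, in every configuration of $\Delta_j\in\{l_j,\d_j\}$ and $\Delta_{j-1}\in\{l_{j-1},\d_{j-1}\}$, the resulting power of $\tau$ stays uniformly bounded by a constant dominated by $2^{4/(p-2)}$, so that the choice of a small universal $\varkappa$ (independent of $j$) suffices to absorb the $\varkappa$-term on the right. The role of Lemma~\ref{emb} is precisely to place $L_j$ inside the region where $\xi_{j-1}=1$, which is what allows the $A_{j-1}(l_j)$ bound to be transferred to scale $j$.
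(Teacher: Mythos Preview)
Your absorption step does not work. The inequality you arrive at has the form
\[
\varkappa\le C'\,\varkappa + C\Big(\frac{l_j}{\d_j}\Big)^2 F_j^2,
\]
where $C'=C\tau^2\big(1+(\Delta_j/\Delta_{j-1})^{p/2-1}\big)$ is a \emph{fixed} constant built from the geometric factor $2^{N+2}$ (coming from $r_{j-1}/r_j$), the bound $\tau^2\le 2^{4/(p-2)}$, and the ratio $(\Delta_j/\Delta_{j-1})^{p/2-1}$. Making $\varkappa$ small does nothing here: the inequality $C'\varkappa<\varkappa$ is equivalent to $C'<1$, which is simply false since $C'\ge 2^{N+2}$. (In addition, when $\Delta_j=\d_j$ and $p>6$ the ratio $(\Delta_j/\Delta_{j-1})^{p/2-1}$ is not even uniformly bounded in $j$, so the ``main technical obstacle'' you flag cannot be verified in general.) Thus you cannot conclude $\d_j\le\g l_jF_j$ from your chain of estimates.

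The paper's proof avoids this by manufacturing an \emph{extra small factor} in front of $\varkappa$. It first proves the measure bound $\sup_{t\in I_j}r_j^{-N}|L_j(t)|\le\g\varkappa$ from the $G$--term of $A_{j-1}(l_j)\le\varkappa$, then splits $L_j$ into $\{\sigma_j<\eps\}$ (where the integrands are trivially $\le\g\eps^2\varkappa$) and $\{\sigma_j\ge\eps\}$, where the Sobolev inequality combined with the measure bound yields a factor $\varkappa^{2/N}$ in front of the gradient energy controlled by Corollary~\ref{cor6}. The resulting inequality is
\[
\varkappa\le \g\eps^2\varkappa+\g(\eps)\varkappa^{2/N}\Big[\varkappa+\big(\tfrac{l_j}{\d_j}\big)^2F_j^2\Big]
+\big(\tfrac{l_j}{\d_j}\big)^2F_j^2,
\]
and now one \emph{can} absorb: choose $\eps$ so that $\g\eps^2=\tfrac14$, then $\varkappa$ so that $\g(\eps)\varkappa^{2/N}=\tfrac14$. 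Your direct comparison with $A_{j-1}$ bypasses this Sobolev step and therefore loses the smallness needed to close the loop.
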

\begin{proof}
Fix $j\ge 1$ and suppose that $\d_j>
\left(\frac12\right)^{\frac2{p-2}}\d_{j-1}$ and $\d_j>F_j$
since otherwise there is nothing to prove. This implies that
$A_j(l_{j+1})=\varkappa$.

We denote $\sigma_j:=\frac{w-l_j}{\d_j}$,
$\Phi_j:=\Phi(\sigma_j)$, $\Psi_j:=\Psi(\sigma_j)$.

{\it Claim}. $\sup\limits_{t\in I_j}\frac1{r_j^N}|L_j(t)|\le \g
\varkappa$.
Indeed,
for $(x,t)\in L_j$ one has
\begin{equation}
\label{Lj}
\frac{w(x,t)-l_{j-1}}{\d_{j-1}}=1+\frac{w(x,t)-l_{j}}{\d_{j-1}}\ge 1.
\end{equation}
Note that Lemma~\ref{emb} yields $\xi_{j-1}=1$ on $Q_j$. Hence
\begin{eqnarray*}
&& r_j^{-N}\sup_{t\in I_j}|L_j(t)|\le r_j^{-N}\sup_{t\in I_j} \int_{L_j(t)}  G\left(\frac{w-l_{j-1}}{\d_{j-1}}\right)\xi_{j-1}^{q}dx\\
&&\le 2^Nr_{j-1}^{-N}\sup_{t\in I_{j-1}} \int_{L_{j-1}(t)}  G\left(\frac{w-l_{j-1}}{\d_{j-1}}\right)\xi_{j-1}^{q}dx \le 2^N\varkappa,
\end{eqnarray*}
which proves the claim.

Now decompose $L_j$ as
$L_j=L_j\pr\cup L_j^{\prime\prime}$,
\begin{equation}
\label{decomp}
L_j\pr=\left\{(x,t)\in L_j\,:\,\frac{w(x,t)-l_j}{\d_j}<\eps \right\}, \quad L_j^{\prime\prime}=L_j\setminus L\pr_j,
\end{equation}
where $\eps$ depending on the data is small enough to be determined later. Then
\begin{equation}
\label{e37}
\frac{\d_j^{\frac p2-1}}{r_j^{N+2}}\iint_{L'_j}\sigma_j^{\frac p2+1}\xi_j^{q-2}dx\,dt
+\frac{\Delta_j^{\frac p2-1}}{r_j^{N+2}}\iint_{L'_j}\sigma_j^2\xi_j^{q-2}dx\,dt
\le \g\eps^2(1+\eps^{p/2-1})\sup\limits_{t\in I_j}\frac1{r_j^N}|L_j(t)|\le \g\eps^2\varkappa.
\end{equation}

Now recall that $\Phi(\sigma)\asymp \min\{\sigma^{\frac
p4+1},\sigma^{\frac{p+2}4}\}$. So $\sigma_j^{\frac p2+1}\le
\g(\eps)\Phi_j^2$ on $L^{\prime\prime}_j$. So we have
\begin{equation}
\label{e38}
\begin{split}
\frac{\d_j^{\frac p2-1}}
{r_j^{N+2}}
\iint_{L^{\prime\prime}_j}\sigma_j^{\frac
p2+1}
\xi_j^{q-2}dx\,dt
\le & \g(\eps)\frac{\d_j^{\frac p2-1}}
{r_j^{N+2}}
\iint_{L^{\prime\prime}_j} \left(\Phi_j\xi_j^{\frac q2-1}\right)^2dx\,dt
\\
\le & \g(\eps)\frac{\d_j^{\frac p2-1}}
{r_j^{N+2}} \int\limits_{I_j}|L_j(t)|^{\frac2N}
\left(\int_{L_j(t)}\left(\Phi_j\xi_j^{\frac
q2-1}\right)^{\frac{2N}{N-2}}dx\right)^{\frac{N-2}N}dt
\\
\le & \g(\eps)\frac{\d_j^{\frac p2-1}}
{r_j^{N+2}}\left(\sup\limits_{t\in I_j}|L_j(t)|
\right)^{\frac2N}\iint_{L_j}\left|\n(\Phi_j\xi_j^{\frac
q2-1})\right|^2dx\,dt
\\
\le & \g(\eps) \left(\sup\limits_{t\in I_j}\frac1{r_j^N}|L_j(t)|
\right)^{\frac2N}\frac{\d_j^{\frac p2-1}}
{r_j^{N}}
\iint \left|\n(\Phi_j\xi_j^{\frac
q2-1})\right|^2dx\,dt\\
\le & \g(\eps)
\varkappa
^{\frac2N}\frac{\d_j^{\frac p2-1}}
{r_j^{N}}
\iint \left|\n(\Phi_j\xi_j^{\frac
q2-1})\right|^2dx\,dt.
\end{split}
\end{equation}
Similarly, if $l_j\ge \d_j$,
\begin{equation}
\label{e39}
\begin{split}
\frac{\Delta_j^{\frac p2-1}}
{r_j^{N+2}}
\iint_{L^{\prime\prime}_j}\sigma_j^{2}
\xi_j^{q-2}dx\,dt
\le \g(\eps) \left(\sup\limits_{t\in I_j}\frac1{r_j^N}|L_j(t)|
\right)^{\frac2N}\frac{l_j^{\frac p2-1}}
{r_j^{N}}
\iint \left|\n(\Psi_j\xi_j^{\frac
q2-1})\right|^2dx\,dt\\
\le \g(\eps)
\varkappa^{\frac2N}
\frac{l_j^{\frac p2-1}}
{r_j^{N}}
\iint \left|\n(\Psi_j\xi_j^{\frac
q2-1})\right|^2dx\,dt.
\end{split}
\end{equation}

Using Corollary~\ref{cor6} we have
\begin{equation}
\label{e40}
\begin{split}
\frac{\d_j^{\frac p2-1}}{r_j^{N+2}}\iint_{L''_j}\sigma_j^{\frac p2+1}\xi_j^{q-2}dx\,dt
+\frac{\Delta_j^{\frac p2-1}}{r_j^{N+2}}\iint_{L''_j}\sigma_j^2\xi_j^{q-2}dx\,dt\\
\le \g(\eps) \varkappa^\frac2{N}
\left[ \varkappa +\left(\frac{l_j}{\d_j}\right)^2 r_j^{2-N}\int_{B_j} F^2 dx\right].
\end{split}
\end{equation}

Now we estimate the first term in the right hand side of \eqref{A} using Corollary~\eqref{cor6} and the Claim.
\begin{equation}
\label{e41}
\begin{split}
\sup_{t\in I_j} \frac1{r_j^N}\int_{L_j(t)} G\left(\frac{w-l_j}{l-l_j}\right)\xi_j^q dx\\
\le \g \eps^2(1+\eps^{p/2-1})\varkappa
+\left(\frac{l_j}{\d_j}\right)^2F_j^2
+ \g(\eps) \varkappa^\frac2{N} \left[ \varkappa
+\left(\frac{l_j}{\d_j}\right)^2 r_j^{2-N}\int_{B_j} F^2
dx\right].
\end{split}
\end{equation}
Collecting \eqref{e37}--\eqref{e41} we obtain
\[
\varkappa\le \g \eps^2(1+\eps^{p/2-1})\varkappa +\left(\frac{l_j}{\d_j}\right)^2F_j^2+
\g(\eps) \varkappa^\frac2{N}\left[ \varkappa +\left(\frac{l_j}{\d_j}\right)^2F_j^2\right].
\]
Now first choosing $\eps$ by the condition
\[
\g \eps^2(1+\eps^{p/2-1})=\frac14,
\]
and then $\varkappa$ such that
\[
\g(\eps)\varkappa^\frac2{N}=\frac14,
\]
we arrive at \eqref{e36}.
\end{proof}

\bigskip
Summing up the inequalities \eqref{e36} with respect to $j$ from 1 to $J-1$ we obtain
\[
l_J\le \g \d_0 + \g l_J \sum_{j=1}^{J-1} F_j.
\]
Choosing $r_0$ small enough so that $\int_0^{r_0}\frac{dr}r\left(\frac1{r^{N-2}}\int_{B_r(x_0)}F^2(y)dy\right)^\frac12<\frac1{2\g}$
we arrive at
\begin{equation}
\label{lJ}
l_J\le \g \d_0.
\end{equation}
It remains to estimate $\d_0$. From \eqref{A} we have
\[
\d_0\le \left(\frac1{r_0^N}\sup_t\int_{B_0}|\n u|^4\xi_0^qdx\right)^\frac12
+\left(\frac1{r_0^{N+p}}\iint_{Q_0} |\n u|^{p+2}\xi_0^{q-2}dx\,d\tau\right)^\frac12.
\]
By the iteration argument of Proposition~\ref{apriori1} with
$\a=l=1$, we obtain, with $Q_0\Subset Q\Subset \O_T$,
\begin{equation}
\label{d0}
\d_0\le \g(r_0^{-\frac N2} + r_0^{-\frac{N+p}2})
\left(\iint_{Q}|\n u|^pdxdt + \iint_Q(F^2+1)dxdt + \left(\iint_Q(F^2+1)dxdt\right)^{\frac N{2(N+2)}}\right).
\end{equation}
It follows from \eqref{lJ} that the sequence $(l_j)_j$ converges to a limit $l \le \g \d_0$, and $\d_j\to 0$ as $j\to \infty$.
We conclude from \eqref{e35} that
\[
\frac1{r_j^{N+2}}\iint_{B_j\times(t_0-\frac{r_j^2}{l^{p/2-1}}, t_0+\frac{r_j^2}{l^{p/2-1}})}(|\n u(x,t)|^2-l)^{p/2+1}_+dx\,dt
\le \g \d_j^2\to 0\quad (j\to \infty).
\]
Choosing $(x_0,t_0)$ as a Lebesgue point of the function $(|\n u|^2-l)^{p/2+1}_+$ we conclude that $|\n u(x_0,t_0)|\le l^{1/2}\le \g \d_0^{1/2}$
with $\d_0$ estimated in \eqref{d0}.

\renewcommand\thesection{\Alph{section}}
\setcounter{section}{0}

\section{Appendix: Example}

Here we construct a function $f\in L^1(\R^N)$
with compact support such that
$\sup\limits_xW^f_p(x,\infty)<\infty$ however
$\lim\limits_{R\to0}\sup\limits_xW^f_p(x,R)>0$. It is a
generalization of an  example in the celebrated paper by Aizenman and
Simon~\cite[Appendix~1, Example~1]{AS}.

\begin{example}
Let $p\in[2,N)$.
Fix a sequence $\{\rho_n\}\subset (0,1)$ such that
$\rho_n\downarrow  0$ as $n\to\infty$ and
$\sum\limits_n\rho_n^{\frac{N-p}{N-1}}<\infty$, and a bounded
sequence $\{x_n\}\subset \R^N$ such that $|x_n-x_m|\ge
4\rho_{n\wedge m}^{\frac{N-p}{N-1}}$ for $m\ne n$. Let
$f_n:=\rho_n^{-p}\ind_{B_{\rho_n}(x_n)}$ and
$f=\sum\limits_nf_n$. Let $\o_N$ denote the volume of the unit
ball in $\R^N$.
First, note that
\begin{equation}
\label{i}
W^{f_n}_p(x_n,\rho_n)=\int\limits_0^{\rho_n}\frac{dr}r\Big(r^p\rho_n^{-p}\o_N\Big)^{\frac1{p-1}}
=\tfrac{p-1}p\o_N^{\frac1{p-1}}=:a_p.
\end{equation}

Next, let $|x-x_n|< \rho_n +\rho_n^{\frac{N-p}{N-1}}$. Then
\begin{equation}
\label{ii}
W^{f_n}_p(x,\infty)\le W^{f_n}_p(x_n,\infty)= W^{f_n}_p(x_n,\rho_n) +
\int\limits_{\rho_n}^\infty\frac{dr}r\Big(r^{p-N}\rho_n^{N-p}\o_N\Big)^{\frac1{p-1}}
= a_p + \tfrac{p-1}{N-p}\o_N^{\frac1{p-1}}=:b_p.
\end{equation}
Now let $|x-x_n|\ge \rho_n +
      \rho_n^{\frac{N-p}{N-1}}$. Then
      \begin{equation}
      \label{iii}
      \begin{split}
W^{f_n}_p(x,\infty)= \int\limits_{|x-x_n|-\rho_n}^\infty\frac{dr}r\Big(r^{p-N}\rho_n^{-p}
\big|B_r(x)\cap B_{\rho_n}(x_n)\big|\Big)^{\frac1{p-1}}\\
\le \o_N^{\frac1{p-1}}\int\limits_{\rho_n^{\frac{N-p}{N-1}}}^\infty\frac{dr}r\Big(\frac r{\rho_n}\Big)^{\frac{p-N}{p-1}}
= \tfrac{p-1}{N-p}\o_N^{\frac1{p-1}}\rho_n^{\frac{N-p}{N-1}}=: c_p\rho_n^{\frac{N-p}{N-1}}.
\end{split}
      \end{equation}
Observe that if $|x-x_n|<\rho_n +
\rho_n^{\frac{N-p}{N-1}}$ for some $n\in\N$ then
$|x-x_m|>\rho_m + \rho_m^{\frac{N-p}{N-1}}$ for every $m\ne n$. 
Since $p\ge2$,
it follows from \eqref{ii} and \eqref{iii} that
\[
W^{f}_p(x,\infty) \le \sum\limits_nW^{f_n}_p(x,\infty)\le b_p + c_p\sum\limits_n\rho_n^{\frac{N-p}{N-1}}<\infty.
\]
On the other hand, \eqref{i} implies that 
\[
\lim\limits_{R\to0}\sup\limits_{x\in\R^N}W^{f}_p(x,R)\ge \lim\limits_{n\to\infty}W^{f_n}_p(x_n,\rho_n)=a_p>0.
\]
\end{example}

\begin{small}
\paragraph{Acknowledgments.}
The authors
would like to thank Giuseppe Mingione for inspiring discussions with the first named author in Pavia in June 2009,
which made us interested in the topic of this research, and also for the discussion of the content of~\cite{DMl}.
\end{small}

%
\begin{small}

\end{small}


\end{document}